\newtheorem{remark}{{\em Remark}}[section]
\newtheorem{example}{{\em Example}}[section]
\def\mr{\mathbb{R}}
\def\mrn{\mathbb{R}^n}
\def\mrm{\mathbb{R}^m}
\def\mn{\mathbb{N}}
\def\mmf{\mathbb{F}}
\def\mf{\mathcal{F}}
\def\mb{\mathcal{B}}
\def\mh{\mathbb{H}}
\def\hh{\mathcal{H}}
\def\mS{\mathrm{S}}
\def\mT{\mathrm{T}}
\def\mss{\mathbb{S}}
\def\mt{\mathbb{T}}
\def\md{\mathbb{D}}
\def\ml{\mathbb{L}}
\def\me{\mathbb{E}}
\def\mmu{\mathcal{U}}
\def\t{\tau}
\def\mx{\mathcal{X}}
\def\dd{\mathcal{D}}
\def\ephs{\varepsilon}
\newcommand{\inner}[2]{\left\langle#1,#2\right\rangle}
\title{Pointwise second-order necessary conditions for stochastic optimal controls, Part II: The general case\thanks{This work is partially supported by the NSF
of China under grants 11231007, 11401404 and 11471231, and the Chang Jiang
Scholars Program from the Chinese Education
Ministry.}}
\author{Haisen Zhang\thanks{School of Mathematics and Statistics, Southwest University, Chongqing 400715,  China; and School of Mathematics, Sichuan University, Chengdu 610064, Sichuan Province, China. {\small\it E-mail:} {\small\tt haisenzhang@yeah.net}.}\and
Xu Zhang\thanks{School of Mathematics, Sichuan University, Chengdu 610064, Sichuan Province, China. {\small\it E-mail:} {\small\tt
zhang$\_$xu@scu.edu.cn}.}}
\begin{document}
\maketitle
\slugger{sicon}{xxxx}{xx}{x}{x--x}

\begin{abstract}
This paper is the second part of our series of work to establish pointwise second-order
necessary conditions for stochastic optimal controls. In this part, we consider the general cases, i.e., the control region is allowed to be nonconvex, and the control variable enters into both the drift and the diffusion terms of the control systems. By introducing four variational equations and four adjoint equations,
we obtain the desired necessary conditions for stochastic singular
optimal controls in the sense of Pontryagin-type maximum principle.
\end{abstract}

\begin{keywords}
Stochastic optimal control, needle variation,
pointwise second-order necessary condition, variational equation, adjoint equation.
\end{keywords}

\begin{AMS}
Primary 93E20; Secondary 60H07, 60H10.
\end{AMS}

\pagestyle{myheadings}
\thispagestyle{plain}
\markboth{H.~Zhang and X.~Zhang}{Pointwise second-order necessary conditions}

\section{Introduction}
Let $T>0$ and $(\Omega,\mf, \mmf,$ $P)$  be a complete filtered
probability space (satisfying the usual conditions),
on which a $1$-dimensional standard Wiener
process $W(\cdot)$ is defined such that $\mmf=\{\mf_{t} \}_{0\le t\le T}$ is the natural filtration generated by $W(\cdot)$ (augmented by all of the $P$-null sets).

We consider the following controlled stochastic differential equation
\begin{equation}\label{controlsys}
\left\{
\begin{array}{l}
dx(t)=b(t,x(t),u(t))dt+\sigma(t,x(t),u(t))dW(t),\ \ \ t\in[0,T],\\
x(0)=x_0,
\end{array}\right.
\end{equation}
with a cost functional
\begin{equation}\label{costfunction}
J(u(\cdot))=\me\Bigg[\int_{0}^{T}f(t,x(t),u(t))dt+h(x(T))\Bigg].
\end{equation}
Here $u(\cdot)$ is the control variable valued in a set $U\subset \mrm$ (for some $m\in \mn$), $x(\cdot)$ is the state variable with values in $\mr^n$, and $b,\sigma:\Omega\times[0,T]\times \mrn\times U\to \mrn$  (for some $n\in \mn$), $f:\Omega\times[0,T]\times \mrn\times U\to \mr$ and $h:\Omega \times\mrn\to \mr$ are given functions (satisfying some conditions to be given later). As usual, for maps $\varphi=b,\ \sigma,\ f$, denote by $\varphi_{x} (\omega,t,x,u)$, $\varphi_{xx} (\omega,t,x,u)$, $\varphi_{xxx} (\omega,t,x,u)$ and $\varphi_{xxxx} (\omega,t,x,u)$ its first, second, third and forth order partial derivatives with respect to the variable $x$ at $(\omega,t,x,u)$, respectively. And, when the context is clear, we omit the $\omega(\in\Omega)$ argument in the defined functions.

Denote by $\mb(\mx)$ the Borel $\sigma$-field of a metric space $\mx$, and by $\mmu_{ad}$ the set of $\mf \otimes\mb([0,T])$-measurable and $\mmf$-adapted stochastic processes valued in $U$. Any $u(\cdot)\in \mmu_{ad}$ is called an admissible control.
The stochastic optimal control problem considered in this paper is to find a control
$\bar{u}(\cdot)\in\mmu_{ad}$  such that
\begin{equation}\label{minimum J}
J(\bar{u}(\cdot))=\inf_{u(\cdot)\in \mmu_{ad}}J(u(\cdot)).
\end{equation}
Any  $\bar{u}(\cdot)\in \mmu_{ad}$ satisfying (\ref{minimum J}) is called an optimal control. The corresponding state  $\bar{x}(\cdot)(=\bar{x}(\cdot;x_0,\bar{u}(\cdot)))$ to (\ref{controlsys}) is called an optimal state, and $(\bar{x}(\cdot),\bar{u}(\cdot))$ is called an optimal pair.

One of the central problems in stochastic control theory is to derive necessary conditions for the optimal pair $(\bar{x}(\cdot),\bar{u}(\cdot))$. Before analyzing this issue in detail, we recall first some elementary facts from the classical calculus. Let us consider a minimizer $x_0(\in G)$ of a smooth function $f(\cdot)$ defined on a set $G\subset \mrn$, i.e., $x_0$ satisfies
 \begin{equation}\label{E1-01}
 f(x_0)=\inf_{x\in G}f(x).
 \end{equation}
If a nonzero vector $\ell\in \mrn$ is admissible (i.e., there is a $\delta>0$ so that $x_0+s\ell\in G$ for any $s\in [0,\delta]$), then one has the following  first-order
necessary condition:
\begin{equation}\label{E101}
  0  \leq   \lim_{s\to 0^+} {f(x_0+s\ell)-f(x_0)\over
s}=\langle f_x(x_0),\ell\rangle.
\end{equation}
When $
 \langle f_x(x_0),\ell\rangle=0$
holds, i.e., (\ref{E101}) degenerates, then one can obtain further a
second-order necessary condition as follows:
\begin{equation}\label{sE102}
  0 \leq  2\lim_{s\to 0^+} {f(x_0+s\ell)-f(x_0)\over
s^2}=\langle f_{xx}(x_0)\ell,\ell\rangle.
 \end{equation}
In the particular case that $G$ is convex, by (\ref{E101}), one has
\begin{equation}\label{sE101}
  0  \leq  \langle f_x(x_0),x-x_0\rangle,\quad \forall\;x\in G.
\end{equation}
When $ f_x(x_0)=0$, then it follows from (\ref{sE102}) that
\begin{equation}\label{E102}
  0 \leq \langle f_{xx}(x_0)(x-x_0),x-x_0\rangle,\quad \forall\;x\in G.
\end{equation}
Clearly, compared to the first-order necessary condition (\ref{E101})/(\ref{sE101}), the second-order necessary condition (\ref{sE102})/(\ref{E102}) can be used to single out the possible minimizer $x_0$ from a smaller subset of $G$. From the above analysis on the minimization problem (\ref{E1-01}), it is easy to see the following:
\begin{itemize}
\item[1)]  Usually, one has to impose more regularity on the data (say $C^2$ for $f(\cdot)$) for the second-order necessary condition than that for the first-order (for which $C^1$ for $f(\cdot)$ is enough);

\item[2)] The derivation of the second-order necessary condition is probably more complicated than that of the first-order situation;

\item[3)] Usually, in order to establish the second-order necessary condition, one needs to assume that the first-order condition degenerates in some sense.

\end{itemize}
Very similar phenomenons happen when one establishes the optimality conditions for optimal control problems, though generally it turns out to be much more difficult than that for the above minimization problem.

For the moment, let us return to the deterministic optimal control problem, i.e., the functions $\sigma(\cdot)\equiv 0$, $b(\cdot)$, $f(\cdot)$, $h(\cdot)$, $x(\cdot)$ and $u(\cdot)$ in (\ref{controlsys})--(\ref{costfunction}) are independent of the sample point $\omega$.
Let $\psi(\cdot)$ be the solution to the following ordinary differential equation,
\begin{equation}\label{firstadjoint for ode}
\left\{
\begin{array}{l}
\dot{\psi}(t)=-b_{x}(t,\bar{x}(t),\bar{u}(t))^{\top}\psi(t)+f_{x}(t,\bar{x}(t),\bar{u}(t)),\quad t\in[0,T],\\
\psi(T)=-h_{x}(\bar{x}(T)).
\end{array}\right.
\end{equation}
Define the Hamiltonian
$$H(t,x,u,\psi):=\inner{\psi}{b(t,x,u)}-f(t,x,u), \qquad\forall\;(t,x,u,\psi)\in [0,T]\times\mrn\times U\times\mrn.
 $$
Then the following Pontryagin maximum principle (\cite{Pontryagin62}) holds
\begin{equation}\label{pontryagin max for ode}
H(t,\bar{x}(t),\bar{u}(t),\psi(t))=\max_{v\in U}H(t,\bar{x}(t),v,\psi(t)), \;\ a.e.\;\ t\in[0,T].
\end{equation}
The maximum condition (\ref{pontryagin max for ode}) is a first-order necessary condition for optimal controls. Suppose that, for a.e. $t\in[0,T]$ the maximization problem (\ref{pontryagin max for ode}) admits a unique solution and the optimal control $\bar{u}(\cdot)$ can be represented as a function $\Upsilon(\cdot,\cdot,\cdot)$ of $t$, $\bar{x}(t)$ and $\psi(t) $, i.e., $\bar{u}(t)=\Upsilon(t, \bar{x}(t),\psi(t))$ satisfies
\begin{equation}\label{repersent optimal ode}
\qquad H(t,\bar{x}(t),\Upsilon(t, \bar{x}(t),\psi(t)),\psi(t))=\max_{v\in U}H(t,\bar{x}(t),v,\psi(t)), \quad a.e.\ t\in[0,T].
\end{equation}
Then, substituting $\Upsilon$ into the control system (\ref{controlsys}) (with $\sigma\equiv 0$) and the adjoint equation (\ref{firstadjoint for ode}), we obtain the following two-point boundary-value problem:
\begin{equation}\label{twopiont boundaryvalue for ode}
\left\{
\begin{array}{l}
\dot{\bar{x}}(t)=H_{\psi}(t,\bar{x}(t),\Upsilon(t, \bar{x}(t),\psi(t)),\psi(t)),\quad t\in[0,T],\\[2mm]
\dot{\psi}(t)=-H_{x}(t,\bar{x}(t),\Upsilon(t, \bar{x}(t),\psi(t)),\psi(t)),\quad t\in[0,T],\\[2mm]
\bar x(0)=x_0,\ \psi(T)=-h_{x}(\bar{x}(T)).
\end{array}\right.
\end{equation}
If both the original optimal control problem and the  two-point boundary-value problem (\ref{twopiont boundaryvalue for ode}) admit unique solutions, then $\bar{u}(\cdot)=\Upsilon(\cdot, \bar{x}(\cdot),\psi(\cdot))$ is the solution to the original optimal control problem (\ref{minimum J}) where $(\bar{x}(\cdot),\psi(\cdot))$ is the solution to (\ref{twopiont boundaryvalue for ode}). It is easy to see that, the uniqueness of the solution to the maximization problem (\ref{pontryagin max for ode}) (in the first-order necessary condition) plays an important role to reformulate the  original optimal control problem into the two-point boundary-value problem (\ref{twopiont boundaryvalue for ode}). When this maximization problem admits multi-solutions, the first-order necessary condition is not enough to determine the optimal controls. Indeed, in this cases, the solution map for the maximization problem becomes a set-valued map. When substituting this set-valued map into the control system and the adjoint equation,  one obtains a differential inclusion problem, which is usually very hard to solve. When this happens, as in the classical calculus, it is quite useful to analyze further the second-order (or even higher-order) necessary conditions for optimal controls. In the case of deterministic control problems (even in finite dimensions), there are many works devoted to this topic (See \cite{fh, Frankowska13, Gabasov72, Goh66, h, Lou10, pz, w} and the rich references therein), especially one can find several interesting monographs (\cite{BellJa75, CA78, Gabasov73, Knobloc}) in this direction.

Naturally, one expects to establish the stochastic maximum principle for the optimal control problem (\ref{minimum J}). We refer to \cite{Bensoussan81, Bismut78, Haussmann76, Kushner72} and references cited therein for some early works in this respect. Since in this case the It\^{o} integral appears in the control system (\ref{controlsys}), things became much complicated. First, quite different from the equation (\ref{firstadjoint for ode}), the adjoint equations in the stochastic cases (called backward stochastic differential equations, BSDEs for short) have two unknowns. Second, when the control region is nonconvex, the needle variation, which is essential a perturbation of the optimal control on a measurable set with small measure, is used to derive the optimality conditions. When the diffusion term $\sigma$ contains the control variable $u$, the state increment is an infinitesimal of order $1/2$ with respect to $\varepsilon$ ($\varepsilon\to 0^+$) (when the optimal control is perturbed with respect to the time variable on a measurable set with Lebesgue measure $\varepsilon$). Therefore, to obtain the first-order necessary condition for optimal controls for the general case, the cost functional needs to be expanded up to the second order, and two variational equations and two adjoint equations need to be introduced  (See \cite{Peng90}). More precisely, define the Hamiltonian $\hh$ by
\begin{equation}\label{Hamiltonian}
\begin{array}{ll}
\qquad\hh(\omega,t,x,u, y_{1},z_{1})=\inner{y_{1}}{b(\omega,t,x,u)}
+\inner{z_{1}}{\sigma(\omega,t,x,u)}-f(\omega,t,x,u),\\[1mm]
\qquad\qquad\qquad\qquad\qquad\quad\forall\;
(\omega,t,x,u,y_{1},z_{1})\in \Omega\times[0,T]\times\mrn\times U\times\mrn\times\mrn.
\end{array}
\end{equation}
Let $(p_{1}(\cdot),q_{1}(\cdot))$ and $(p_{2}(\cdot),q_{2}(\cdot))$ be respectively solutions to the following first- and second-order adjoint equations,
\begin{equation}\label{firstajointequ}
\qquad\ \left\{
\begin{array}{l}
dp_{1}(t)=-\Big[b_{x}(t)^{\top}p_{1}(t)+\sigma_{x}(t)^{\top}q_{1}(t)
-f_{x}(t)\Big]dt+q_{1}(t)dW(t),
 \  t\in[0,T], \\[+0.5em]
p_{1}(T)=-h_{x}(\bar{x}(T))
\end{array}\right.
\end{equation}
and
\begin{equation}\label{secondajointequ}
\qquad\left\{
\begin{array}{l}
dp_{2}(t)=-\Big[b_{x}(t)^{\top}p_{2}(t)+p_{2}(t)b_{x}(t) +\sigma_{x}(t)^{\top}p_{2}(t)\sigma_{x}(t) +\sigma_{x}(t)^{\top}q_{2}(t)\\[+0.5em]
\qquad\qquad \qquad\qquad\qquad
+q_{2}(t)\sigma_{x}(t)+\hh_{xx}(t)\Big]dt+q_{2}(t)dW(t),\  t\in[0,T], \\[+0.5em]
p_{2}(T)=-h_{xx}(\bar{x}(T)),
\end{array}\right.
\end{equation}
where $b_{x}(t)\!=\!b_{x}(t,\bar{x}(t),\bar{u}(t))$, $\sigma_{x}(t)\!=\!\sigma_{x}(t,\bar{x}(t),\bar{u}(t))$, $f_{x}(t)\!=\!f_{x}(t,\bar{x}(t),\bar{u}(t))$,
$\hh_{xx}(t)\!=\!\hh_{xx}(t, \bar{x}(t),\bar{u}(t), p_{1}(t),q_{1}(t))$. The following first-order necessary condition for the optimal pair $(\bar{x}(\cdot),\bar{u}(\cdot))$  is established in \cite{Peng90}:
\begin{equation}\label{pengs firstorder condition}
\mh(t,\bar{x}(t),v)\le 0,\qquad \forall\ v\in U, \ a.e.\ (\omega,t)\in \Omega\times [0,T].
\end{equation}
where
$$
\begin{array}{ll}
&\mh(\omega,t,x,u)\\[2mm]
&=\hh(\omega,t,x,u, p_{1}(t),q_{1}(t))
-\hh(\omega,t,x,\bar{u}(t), p_{1}(t),q_{1}(t))\nonumber\\[2mm]
&\quad+\frac{1}{2}\inner{p_{2}(t)\big(\sigma(\omega,t,x,u)-\sigma(\omega,t,x,\bar{u}(t))\big)}
{\sigma(\omega,t,x,u)-\sigma(\omega,t,x,\bar{u}(t))},\\[2mm]
&\quad\qquad\qquad \qquad\qquad \qquad\qquad \qquad\qquad
(\omega,t,x,u)\in \Omega\times[0,T]\times\mrn\times U.
\end{array}
 $$
Similar to the above, if the optimal control $\bar{u}(\cdot)$ can be represented as a function $\Psi(\cdot,\cdot,\cdot,\cdot,\cdot,\cdot)$ of $(\omega,t,\bar{x},p_{1},q_{1},$ $p_{2}) $ using the condition (\ref{pengs firstorder condition}) (i.e., $\bar{u}(\omega,t)=\Psi(\omega,t,\bar{x}(t),$ $p_{1}(t),q_{1}(t),p_{2}(t))$) (Note that $q_{2}$ does not appear explicitly in the definition of $\mh$), then the optimal control problem can be closely related to the following fully coupled forward backward stochastic differential  equation (FBSDE, in short):
\begin{equation}\label{coupled FBSDE}
\qquad\;\left\{
\begin{array}{l}
d\bar{x}(t)=\hat{b}(t)dt
+\hat{\sigma}(t)dW(t),\ t\in[0,T],\\[2mm]
dp_{1}(t)=-\Big[\hat{b}_{x}(t)^{\top}p_{1}(t)+\hat{\sigma}_{x}(t)^{\top}q_{1}(t)
-\hat{f}_{x}(t)\Big]dt+q_{1}(t)dW(t),\ t\in[0,T],\\[2mm]
dp_{2}(t)=-\Big[\hat{b}_{x}(t)^{\top}p_{2}(t)+p_{2}(t)\hat{b}_{x}(t) +\hat{\sigma}_{x}(t)^{\top}p_{2}(t)\hat{\sigma}_{x}(t) +\hat{\sigma}_{x}(t)^{\top}q_{2}(t)\\[2mm]
\qquad\qquad
+q_{2}(t)\hat{\sigma}_{x}(t)+\hat{\hh}_{xx}(t)\Big]dt+q_{2}(t)dW(t),\ t\in[0,T],\\[2mm]
x(0)=x_0,\ p_{1}(T)=-h_{x}(\bar{x}(T)),\ p_{2}(T)=-h_{xx}(\bar{x}(T)).
\end{array}\right.
\end{equation}
where $\hat{b}(t)=b(t,\bar{x}(t),\Psi(t,\bar{x}(t),p_{1}(t),q_{1}(t),p_{2}(t)))$,
$\hat{\sigma}(t)=\sigma(t,\bar{x}(t),\Psi(t,\bar{x}(t),$ $p_{1}(t),$ $q_{1}(t),p_{2}(t)))$,
$\hat{\hh}_{xx}(t)\!=\!\hh_{xx}(t,\bar{x}(t),
\!\Psi(t,\bar{x}(t),p_{1}(t),q_{1}(t),p_{2}(t))$,
similar for $\hat{b}_{x}(t)$, $\hat{\sigma}_{x}(t)$, and $\hat{f}_{x}(t)$.
For some more discussions about FBSDEs,  we refer to \cite{MaYong99}.

However, exactly as the deterministic case, the first-order necessary condition is not always effectively to find the stochastic optimal controls. In the preceding discussion, the uniqueness of the solution to (\ref{pengs firstorder condition}) plays an important role to reduce the original optimal control problem to the FBSDE (\ref{coupled FBSDE}). When the problem (\ref{pengs firstorder condition}) admits multi-solutions, one needs to establish suitable second-order necessary condition for optimal controls as an effective supplement to the first-order condition. As we mentioned before, there exist many works addressing to the corresponding deterministic problems. However, in the stochastic setting, there are only two articles \cite{Bonnans12} and \cite{Tang10} available before our work \cite{zhangH14a}. When the diffusion terms do not contain the control variable, Tang \cite{Tang10} derived a pointwise second-order maximum principle for stochastic optimal controls, for which the control regions are allowed to be nonconvex. When the diffusion terms contain the control variable, Bonnans and Silva \cite{Bonnans12} established some integral-type (rather than pointwise) second-order necessary conditions for stochastic optimal controls with convex control constrains. In \cite{zhangH14a}, we found that, quite different from the
deterministic setting, there exist some essential difficulties in deriving the {\em pointwise}
second-order necessary condition from an integral-type one whenever the diffusion terms
contain the control variable, even for the special case of convex control
constraints, and obtained a positive result for this case under some assumptions
in terms of the Malliavin calculus.

The main purpose of this paper is to establish some pointwise second-order necessary conditions for stochastic optimal controls in the general cases, i.e., the control regions are allowed to be nonconvex and both the drift and diffusion terms contain the control variable. Stimulated by \cite{Peng90}, it is easy to see that, in order to obtain the second-order optimality condition for the general case, one needs to expand the cost functional up to the forth order, and introduce four variational equations and four adjoint equations. This is the main difference between the present paper and the previous related works (i.e., \cite{Bonnans12, Tang10, zhangH14a}). On the other hand, the solutions of the variational equations appear in the second-order terms (in the sense of the perturbation measure) of the variational formulation with respect to the optimal controls, and it seems to us that, they cannot be eliminated by introducing new adjoint equations. When the diffusion terms of the control systems contain the control variables, similar to the convex control constraint cases, the Lebesgue differentiation theorem cannot be used directly to derive the pointwise second-order necessary condition from the variational formulation (See \cite[Subsection 3.2]{zhangH14a} for a detailed explanation). This is another difference between this paper and \cite{Tang10} addressing to the case of the diffusion term independent of the control variable. In this paper, first we establish a variational formulation of (\ref{minimum J}) with respect to the optimal controls. Then, using this variational formulation and the martingale representation theorem, we derive a second-order necessary condition for stochastic optimal controls. Further, under some conditions, we refine this result and obtain a pointwise second-order necessary condition. Note that the analysis in this paper is much complicated than that in \cite{zhangH14a} though some of the ideas and techniques are the same in these two papers.

The rest of this paper is organized as follows. In Section 2, we collect some
notation and concepts. In Section 3, we introduce the related variational equations and adjoint equations. In Section 4, we state
the main results of this paper and present some remarks and examples. Section 5 is devoted to proving our main results. Finally, the proofs of two technical results are given in Appendixes A and B, respectively.

Partial results in this paper have been announced in \cite{ZhangSCM15} without proofs.

\section{Preliminaries}

Let $m,\ n,\ d,\ h,\ l\in \mn$. Denote by $\inner{\cdot}{\cdot}$ and $|\cdot|$ respectively the inner product and norm in $\mrn$ or $\mrm$, which can be identified from the contexts.
For any $\alpha,\beta\in [1,+\infty)$, denote by $L_{\mf_{T}}^{\beta}(\Omega; \mrn)$ the space of $\mf_{T}$-measurable random variables $\xi$ such that $\me~|\xi|^{\beta}<+\infty$, by $L^{\beta}(\Omega\times[0,T]; \mrn)$ the space of $\mf\otimes\mb([0,T])$-measurable processes $\varphi$ such that $\|\varphi\|_{\beta}:=\big[\me\int_{0}^{T}|\varphi(t)|^{\beta}dt
\big]^{\frac{1}{\beta}} <+\infty$,
by $L_{\mmf}^{\beta}(\Omega; L^{\alpha}(0,T; \mrn))$ the space of $\mf\otimes\mb([0,T])$-measurable, $\mmf$-adapted processes $\varphi$ such that $\|\varphi\|_{\alpha,\beta}:=\big[\me~\big(\int_{0}^{T}|\varphi(t)|^{\alpha}dt\big)
^{\frac{\beta}{\alpha}}\big]^{\frac{1}{\beta}} <+\infty$,
by $L_{\mmf}^{\beta}(\Omega; C([0,T]; \mrn))$ the space of $\mf\otimes\mb([0,T])$-measurable, $\mmf$-adapted  continuous processes $\varphi$  such that $\|\varphi\|_{\infty,\beta}:=
\big[\me~\big(\sup_{t\in[0,T]}|\varphi(t)|^{\beta}\big)\big]^{\frac{1}{\beta}} <+\infty$, by $L^{\infty}(\Omega\times[0,T]; \mrn)$ the space of $\mf\otimes\mb([0,T])$-measurable processes $\varphi$ such that $\|\varphi\|_{\infty}:=\mbox{ess sup}_{(\omega,t)\in \Omega\times[0,T]}|\varphi(\omega,t)| <+\infty $,
and by
$L^{\beta}(0,T;  L_{\mmf}^{\beta}(\Omega\times[0,T]; \mrn))$ the $\mf\otimes \mb([0,T]\times[0,T])$ measurable maps $\varphi$ such that for any $t\in[0,T]$, $\varphi(\cdot,t)$ is $\mmf$-adapted and $\|\varphi\|_{\beta}
:=\big[\me\int_{0}^{T}\int_{0}^{T}|\varphi(s,t)|^{\beta}dsdt
\big]^{\frac{1}{\beta}}<+\infty$.

Let $\md^{1,2}(\mrn)\subset L_{\mf_{T}}^{2}(\Omega; \mrn)$ be the space of Malliavin differentiable random variables, and for any $\xi\in \md^{1,2}(\mrn)$ denote by $\dd_{\cdot}\xi$ its Malliavin derivative. Denote by $\ml^{1,2}(\mrn)$ the subspace of $L^{2}(\Omega\times [0,T];\mrn))$ whose elements satisfy the following conditions.
\begin{enumerate}[{\rm (i)}]
  \item For almost every $t\in[0,T]$, $\varphi(t,\cdot)\in \md^{1,2}(\mrn)$,
  \item $(\omega, t, s)\to D_{s}\varphi(t, \omega)$ admits an $\mf\otimes\mb([0,T]\times [0,T])$-measurable version, and
  \item $|||\varphi|||_{1,2}:=\big[\me\int_{0}^{T}|\varphi(t)|^2dt
      +\me\int_{0}^{T}\int_{0}^{T}|D_{s}\varphi(t)|^2dsdt\big]^{\frac{1}{2}}<+\infty,$
\end{enumerate}
where $\dd_{\cdot}\varphi(t,\cdot)$ is the Malliavin derivative of the random variable $\varphi(t,\cdot)$.
Denote by $\ml_{2}^{1,2}(\mrn)$ the subspace of $L^{2}(\Omega\times [0,T];\mrn))$  whose elements are Malliavin differentiable almost everywhere and their Malliavin derivatives have suitable continuity. More precisely, write
\begin{eqnarray*}
\ml_{2^+}^{1,2}(\mrn)&&:=\Big\{\varphi(\cdot)\in\ml^{1,2}(\mrn)\;\Big|\; \exists\ \nabla^+\varphi(\cdot)\in L^2(\Omega\times[0,T];\mrn)\ \mbox{such that}\\
& & f_{\ephs}(s):=\sup_{s<t<(s+\varepsilon)\wedge T}
\me~\big|\dd_{s}\varphi(t)-\nabla^+\varphi(s)\big|^2<+\infty,\ a.e.\ s\in [0,T],\\
& & f_{\ephs}(\cdot)\ \mbox{is measurable on } [0,T]\mbox{ for any }\ephs>0,\ \mbox{and}\ \lim_{\varepsilon\to 0^+}\int_{0}^{T}f_{\ephs}(s)ds=0\Big\},\\
\ml_{2^-}^{1,2}(\mrn)&&:=\Big\{\varphi(\cdot)\in\ml^{1,2}(\mrn)\;\Big|\; \exists\ \nabla^-\varphi(\cdot)\in L^2(\Omega\times[0,T];\mrn)\ \mbox{such that}\\
& & g_{\ephs}(s):=\sup_{(s-\varepsilon)\vee 0<t<s}
\me~\big|\dd_{s}\varphi(t)-\nabla^-\varphi(s)\big|^2<+\infty,\ a.e.\ s\in [0,T],\\
& & g_{\ephs}(\cdot)\ \mbox{is measurable on } [0,T] \mbox{ for any }\ephs>0,\ \mbox{and}\ \lim_{\varepsilon\to 0^+}\int_{0}^{T}g_{\ephs}(s)ds=0\Big\}.
\end{eqnarray*}
Denote
$\ml_{2}^{1,2}(\mrn)=\ml_{2^+}^{1,2}(\mrn)\cap\ml_{2^-}^{1,2}(\mrn)$.
For any $\varphi(\cdot)\in \ml_{2}^{1,2}(\mrn)$,  denote
$\nabla\varphi(\cdot)=\nabla^{+}\varphi(\cdot)+\nabla^{-}\varphi(\cdot)$.
When $\varphi$ is $\mmf$-adapted, $\dd_{s}\varphi(t)=0$ for any $t<s$. In this case, $\nabla^{-}\varphi(\cdot)=0$, and $\nabla\varphi(\cdot)=\nabla^{+}\varphi(\cdot)$. Denote by $\ml_{2,\mmf}^{1,2}(\mrn)$ the set of all $\mmf$-adapted processes in $\ml_{2}^{1,2}(\mrn)$. We refer to \cite{Nualart06} for more materials on this topic.

Denote by $L(\mathop\Pi\limits_{d}\mrn; \mrm)$ the $d$-linear maps from $\underbrace{\mrn\times\ldots\times\mrn}_{d}$ to $\mrm$.
Let $\{e_{1},\ldots,$ $ e_{n}\}$ be the standard basis of $\mrn$, $\{\mathfrak{e}_{1},\ldots, \mathfrak{e}_{m}\}$ be the standard basis of $\mrm$.
Any $\Lambda$ in $L(\mathop\Pi\limits_{d}\mrn; \mrm)$ is uniquely determined by the numbers $$\lambda^{ij_{1},\ldots, j_{d}}:=\inner{\Lambda(e_{j_{1}},\ldots, e_{j_{d}})}{\mathfrak{e}_{i}}, \quad i=1,\ldots, m,\ j_{k}=1,\ldots, n,\ k=1,\ldots, d.$$
We define the norm of $\Lambda$ by
$$|\Lambda|:=\Bigg[\sum_{i,j_{k},k=1,\ldots,d}\Big(\lambda^{ij_{1},\ldots, j_{d}}\Big)^{2}\Bigg]^{\frac{1}{2}}.$$
Let $\Gamma\in L(\mathop\Pi\limits_{h}\mrn; \mrn)$ and $\Theta\in L(\mathop\Pi\limits_{l}\mrn; \mrn)$. We denote by $\Lambda\mathop\circ\limits_{i}\Gamma$ the composition of $\Lambda$ with $\Gamma$ at the $i$th position ($i=1,\cdots,d$), i.e.,
\begin{eqnarray*}
\Lambda\mathop\circ\limits_{i}\Gamma(x_{1},\ldots,x_{d+h-1})
&=&\Lambda(x_{1},\ldots,\mathop{\Gamma}_{i}(x_{i},\ldots,x_{i+h-1}),\ldots,x_{d+h-1}),\\
& &\qquad\qquad\qquad\qquad\qquad
\quad x_{k}\in \mrn,\ k=1,\ldots, d\!+\!h\!-\!1,
\end{eqnarray*}
and we denote by $\Lambda\mathop\circ\limits_{i,j}(\Gamma,\Theta)$ the composition of $\Lambda$ with $\Gamma$ and $\Theta$ at the $i$th and the $j$th positions ($i\not=j,\; i,j=1,\cdots,d$), i.e.,
\begin{eqnarray*}
& &\Lambda\mathop\circ\limits_{i,j}(\Gamma,\Theta)(x_{1},\ldots,x_{d+h+l-2})\\
&=&\Lambda(x_{1},\ldots,\mathop{\Gamma}_{i}(x_{i},\ldots,x_{i+h-1}),\ldots,
\mathop{\Theta}_{j}(x_{j+h-1},\ldots,x_{j+h+l-2}), \ldots,x_{d+h+l-2}),\\
& &\qquad\qquad\qquad\qquad\qquad\qquad\qquad\qquad\quad
\quad x_{k}\in \mrn,\ k=1,\ldots, d\!+\!h\!+\!l\!-\!2.
\end{eqnarray*}

In a similar way, if $y,z\in \mrn$, we denote
\begin{eqnarray*}
& &\Lambda\mathop\bullet\limits_{i}y(x_{1},\ldots,x_{d-1})
=\Lambda(x_{1},\ldots,\mathop{y}_{i},\ldots,x_{d-1}),\ \ x_{k}\in \mrn,\ k\!=\!1,\ldots, d\!-\!1,\\
& &\Lambda\mathop\bullet\limits_{i,j}(y,z)(x_{1},\ldots,x_{d-2})
=\Lambda(x_{1},\ldots,\mathop{y}_{i},\ldots,
\mathop{z}_{j}, \ldots,x_{d-2}), \ x_{k}\in \mrn,\ k\!=\!1,\ldots, d\!-\!2.\\
\end{eqnarray*}

Denote by $L_{\mmf}^{2}(\Omega;L^{1}(0,T;L(\mathop\Pi\limits_{d}\mrn; \mr))$ the space of $\mf\otimes\mb([0,T])/\mb(L(\mathop\Pi\limits_{d}\mrn; \mr))$-measurable processes $\varphi$ such that $\|\varphi\|_{1,2}:=\big[\me~\big(\int_{0}^{T}|\varphi(t)|dt\big)
^{2}\big]^{\frac{1}{2}} <+\infty$ and by
$L_{\mmf}^{2}(\Omega;L^{2}(0,T;L(\mathop\Pi\limits_{d}\mrn; \mr))$ the space of $\mf\otimes\mb([0,T])/\mb(L(\mathop\Pi\limits_{d}\mrn; \mr))$-measurable processes $\varphi$ such that $\|\varphi\|_{2}:=\big[\me\int_{0}^{T}|\varphi(t)|^2dt
\big]^{\frac{1}{2}} <+\infty$.
We give below an It\^{o} formula for  multi-linear function-valued stochastic processes, which is an easy extension of the classical It\^{o} formula (Hence we omit its proof).
\begin{lemma}\label{o formu}
Let $P(\cdot)$ be an $L(\mathop\Pi\limits_{d}\mrn; \mr)$-valued process of the form
$$P(t)=P_{0}+\int_{0}^{t}A(s)ds+\int_{0}^{t}B(s)dW(s),\qquad t\in [0,T]$$
where $A(\cdot)\in L_{\mmf}^{2}(\Omega;L^{1}(0,T;L(\mathop\Pi\limits_{d}\mrn; \mr))$, $B(\cdot)\in L_{\mmf}^{2}(\Omega;L^{2}(0,T;L(\mathop\Pi\limits_{d}\mrn; \mr))$, and let $x(\cdot)$ be an $\mrn$-valued process such that
$$x(t)=x_{0}+\int_{0}^{t}f(s)ds+\int_{0}^{t}g(s)dW(s),\qquad t\in [0,T],$$
where $f(\cdot)\in L_{\mmf}^{2}(\Omega;L^{1}(0,T;\mrn)$, $g(\cdot)\in L_{\mmf}^{2}(\Omega;L^{2}(0,T;\mrn)$.
Then the following It\^{o} formula holds.
\begin{eqnarray}\label{Ito formulation}
& &P(T)\big(x(T),\ldots,x(T)\big)-P_{0}\big(x_{0},\ldots,x_{0}\big)\nonumber\\
&=&\int_{0}^{T}\Big[\sum_{i=1}^{d}P(t)\big(x(t),\ldots,\mathop {f(t)}\limits_{i}, \ldots, x(t)\big)+A(t)\big(x(t),\ldots,x(t)\big)\nonumber\\
& &\qquad\quad
+\sum_{i=1}^{d}B(t)\big(x(t),\ldots,\mathop {g(t)}\limits_{i}, \ldots, x(t)\big)\nonumber\\
& &\qquad\quad
+\sum_{i=1}^{d}\sum_{j=i+1}^{d}P(t)\big(x(t),\ldots,\mathop {g(t)}\limits_{i},\ldots,\mathop {g(t)}\limits_{j}, \ldots, x(t)\big)
\Big]dt\nonumber\\
& &+\int_{0}^{T}\Big[B(t)\big(x(t),\ldots,x(t)\big)+\sum_{i=1}^{d}P(t)\big(x(t),\ldots,\mathop {g(t)}\limits_{i}, \ldots, x(t)\big)
\Big]dW(t).
\end{eqnarray}
\end{lemma}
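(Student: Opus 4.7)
The plan is to reduce to the classical scalar It\^o formula by observing that $L(\mathop\Pi\limits_{d}\mrn;\mr)$ is a finite-dimensional vector space and the evaluation map $(P,x)\mapsto P(x,\ldots,x)$ is a polynomial in the coordinates of $P$ and $x$. Fix the standard basis $\{e_{1},\ldots,e_{n}\}$ of $\mrn$ and introduce the scalar coordinate processes $p^{j_{1}\cdots j_{d}}(t):=P(t)(e_{j_{1}},\ldots,e_{j_{d}})$ and $x^{j}(t):=\inner{x(t)}{e_{j}}$. Reading off the components of $A,B,f,g$ in the same basis shows that each $p^{j_{1}\cdots j_{d}}(\cdot)$ and each $x^{j}(\cdot)$ is a real-valued It\^o process. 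Then
$$
P(t)\big(x(t),\ldots,x(t)\big)=\sum_{j_{1},\ldots,j_{d}=1}^{n} p^{j_{1}\cdots j_{d}}(t)\, x^{j_{1}}(t)\cdots x^{j_{d}}(t)
$$
is a polynomial in finitely many scalar It\^o processes, hence covered by the classical multidimensional It\^o formula.

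Next I would apply the scalar It\^o formula term by term and regroup the result using the multilinear notation. The first-order $dp^{j_{1}\cdots j_{d}}$ contribution reassembles into $A(t)\big(x(t),\ldots,x(t)\big)dt+B(t)\big(x(t),\ldots,x(t)\big)dW(t)$; the first-order $dx^{j}$ contribution, after inserting $f$ or $g$ in the $i$-th slot and summing over $i\in\{1,\ldots,d\}$, produces the drift $\sum_{i=1}^{d}P(t)\big(x(t),\ldots,\mathop{f(t)}\limits_{i},\ldots,x(t)\big)dt$ and the martingale $\sum_{i=1}^{d}P(t)\big(x(t),\ldots,\mathop{g(t)}\limits_{i},\ldots,x(t)\big)dW(t)$. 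It\^o's quadratic-variation correction produces two further drifts: the covariations $d\langle p^{j_{1}\cdots j_{d}},x^{j_{i}}\rangle_{t}$ yield $\sum_{i=1}^{d}B(t)\big(x(t),\ldots,\mathop{g(t)}\limits_{i},\ldots,x(t)\big)dt$, and the covariations $d\langle x^{j_{i}},x^{j_{k}}\rangle_{t}$ with $i\neq k$ yield $\sum_{i=1}^{d}\sum_{j=i+1}^{d}P(t)\big(x(t),\ldots,\mathop{g(t)}\limits_{i},\ldots,\mathop{g(t)}\limits_{j},\ldots,x(t)\big)dt$. No $P$-$P$ correction appears because the evaluation map is linear in $P$. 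Integrating from $0$ to $T$ then yields exactly (\ref{Ito formulation}).

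The only real obstacle is notational bookkeeping: one must verify that the symmetric coordinate sum over unordered pairs $\{i,k\}$ with $i\neq k$ coincides with the ordered double sum $\sum_{i=1}^{d}\sum_{j=i+1}^{d}$, and that replacing an argument in the $i$-th slot of $P$ is correctly captured by the subscripted-slot notation used in (\ref{Ito formulation}). The integrability hypotheses on $A,B,f,g$ ensure that every drift integral and stochastic integral appearing in the term-by-term computation is well defined, so the passage back to the coordinate-free identity (\ref{Ito formulation}) is automatic and no further approximation argument is needed.
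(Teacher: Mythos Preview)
Your proposal is correct and is precisely the ``easy extension of the classical It\^o formula'' that the paper alludes to; the paper in fact omits the proof entirely, and your coordinate reduction followed by regrouping into multilinear notation is the natural way to fill in the details. The only point worth stressing is the one you already flagged: in the $D^{2}_{yy}$ correction the sum over ordered pairs $(i,k)$ with $i\neq k$ collapses to $\sum_{i<j}$ because the same vector $g(t)$ sits in both slots, which accounts for the factor $\tfrac12$ and matches the stated double sum.
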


\section{Variational formulations}
In this section, we establish a second-order (with respect to the perturbation measure) Taylor expansion of the cost function at the optimal control $\bar{u}(\cdot)$. Firstly, we recall some known estimates for stochastic differential equations.
\begin{lemma}\label{estimatelinearsde} {\rm (\cite[Proposition 2.1]{Yong07})}
Suppose that there exists a constant $L > 0$ such that for $\varphi = b,\ \sigma$ and any $x,\ \tilde{x}\in\mrn$,  $u\in U$,
\begin{equation}\label{standard assumption}
\left\{
\begin{array}{l}
|\varphi(t,x, u)-\varphi(t,\tilde{x}, u)|\le L|x-\tilde{x}|,
\  a.s.\ a.e.\ t\in [0,T],\\
|\varphi(t,0, u)|\le L,\ a.s.\ a.e.\ t\in [0,T].\\
\end{array}\right.
\end{equation}
Then for any $\beta\ge 1$, $u(\cdot)\in \mmu_{ad}$ and initial datum $x_{0}\in \mrn$, the state equation (\ref{controlsys}) admits a unique solution
$x(\cdot) \in L_{\mmf}^{\beta}(\Omega; C([0; T]; \mrn))$, and for some constant $C=C(\beta,L,T)>0$ the following estimate holds:
\begin{eqnarray}\label{estimateofx}
& &\me\Big[\sup_{s\in[0,t]}|x(s)|^{\beta}\Big]\nonumber\\
&\le& C\me~\Bigg[|x_{0}|^{\beta}
+\Big(\int_{0}^{t}|b(s,0,u(s))|ds\Big)^{\beta}
+\Big(\int_{0}^{t}|\sigma(s,0,u(s))|^{2}ds\Big)^{\frac{\beta}{2}}\Bigg].
\end{eqnarray}
Further, if $\bar{x}(\cdot)$ is the unique solution corresponding to
$(\bar{x}_{0}, \bar{u}(\cdot))\in \mrn\times \mmu_{ad}$, then
\begin{eqnarray}\label{continuousofxwithx0andu}
& &\me\Big[\sup_{s\in[0,t]}|x(s)-\bar{x}(s)|^{\beta}\Big]\nonumber\\
&\le&  C\me~\Bigg[|x_{0}-\bar{x}_{0}|^{\beta}+
\Big(\int_{0}^{t}\big|b(s,\bar{x}(s),u(s))
-b(s,\bar{x}(s),\bar{u}(s))\big|ds\Big)^{\beta}\nonumber\\
& &\qquad\qquad\qquad\quad+\Big(\int_{0}^{t}\big|\sigma(s,\bar{x}(s),u(s))
-\sigma(s,\bar{x}(s),\bar{u}(s))\big|^{2}ds
\Big)^{\frac{\beta}{2}}\Bigg].
\end{eqnarray}
\end{lemma}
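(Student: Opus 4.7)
The plan is to treat this as a standard well-posedness and a priori moment estimate result for SDEs with random Lipschitz-in-$x$ coefficients, following the classical Picard-iteration plus It\^o/BDG/Gr\"onwall template.

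For existence and uniqueness, I would fix $(x_0,u(\cdot))\in\mrn\times\mmu_{ad}$ and consider the map $\Phi:L_{\mmf}^{\beta}(\Omega;C([0,T];\mrn))\to L_{\mmf}^{\beta}(\Omega;C([0,T];\mrn))$ defined by $(\Phi x)(t)=x_0+\int_0^t b(s,x(s),u(s))\,ds+\int_0^t\sigma(s,x(s),u(s))\,dW(s)$. Using (\ref{standard assumption}) to bound $|b|,|\sigma|$ by $L(1+|x|)+L$, together with BDG applied to the stochastic integral, one checks that $\Phi$ maps the space into itself. Endowing the space with the equivalent norm $|||x|||_\lambda :=(\me\sup_{t\in[0,T]}e^{-\lambda\beta t}|x(t)|^\beta)^{1/\beta}$ for sufficiently large $\lambda>0$, the Lipschitz property of $b,\sigma$ in $x$ makes $\Phi$ a strict contraction, and Banach's fixed-point theorem yields a unique solution in $L_{\mmf}^{\beta}(\Omega;C([0,T];\mrn))$.

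For the moment estimate (\ref{estimateofx}), I would first handle the case $\beta\geq 2$: apply It\^o's formula to $|x(s)|^\beta$ (or to the regularization $(|x|^2+\epsilon)^{\beta/2}$ followed by $\epsilon\downarrow 0$), use $|b(s,x,u)|\leq |b(s,0,u)|+L|x|$ and analogously for $\sigma$ to bound the drift contributions by $C(|x|^\beta+|b(s,0,u(s))|\cdot|x|^{\beta-1}+|\sigma(s,0,u(s))|^2|x|^{\beta-2})$, take $\sup_{s\leq t}$ and expectation, and control the resulting martingale by BDG. Young's inequality absorbs the BDG-generated $\me\sup_{s\leq t}|x(s)|^\beta$ into the left-hand side, after which Gr\"onwall's inequality closes the estimate and produces the integrals on the right of (\ref{estimateofx}). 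For $\beta\in[1,2)$ the case $\beta=2$ combined with Jensen's inequality already implies the claim, up to a constant depending on $\beta,L,T$.

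For the continuous-dependence estimate (\ref{continuousofxwithx0andu}), the same machinery applies to $y(\cdot):=x(\cdot)-\bar{x}(\cdot)$, which solves an SDE whose drift decomposes as $[b(s,x,u)-b(s,\bar x,u)]+[b(s,\bar x,u)-b(s,\bar x,\bar u)]$, with the analogous splitting for $\sigma$. The first brackets are Lipschitz in $y$ and play the role of the homogeneous linear part (handled by Gr\"onwall), while the second brackets form an inhomogeneous source producing exactly the two integrals on the right-hand side of (\ref{continuousofxwithx0andu}). The main technical nuisance, and the point I expect to be most delicate, is the standard localization argument (stopping times plus Fatou) needed to legitimately absorb the BDG-generated $\me\sup|y|^\beta$ term before Gr\"onwall is applied; since the statement is quoted verbatim from \cite{Yong07}, I would in fact simply cite that reference for the detailed bookkeeping rather than re-proving it here.
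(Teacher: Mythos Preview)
Your proposal is correct and aligns with the paper's treatment: the paper does not prove this lemma at all but simply quotes it verbatim from \cite[Proposition~2.1]{Yong07}, exactly as you conclude you would do. Your sketch of the underlying Picard/BDG/Gr\"onwall argument is the standard one behind that reference and is accurate, but strictly more than what the paper itself provides.
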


In what follows, we assume that
\begin{enumerate}
    \item [(C1)] The control region $U \subset \mrm$ is nonempty and bounded.
    \item [(C2)] Functions $b$, $\sigma$, $f$, and $h$ satisfy
       \begin{enumerate}[{\rm (i)}]
       \item For any $(x, u)\in \mrn\times U$, the stochastic processes
             $b(\cdot, x, u):\ [0,T]\times\Omega\to \mrn$, $\sigma(\cdot, x, u):\ \Omega\times [0,T]\to \mrn$ and $f(\cdot, x, u):\ \Omega\times [0,T]\to \mr$
             are $\mf\otimes\mb([0,T])$-measurable and $\mmf$-adapted. $h(x,\cdot):\Omega\to \mr$ is $\mf_{T}$-measurable.
       \item For almost all $(\omega, t)\in \Omega\times[0, T]$ and any $u\in U$, the map
             $x\mapsto (b(\omega,t, x, u), \sigma(\omega,t, x, u), f(\omega,t, x, u))$
             is continuously differentiable up to the forth order, and
             there exist a constant $L > 0$ and a modulus of continuity $\tilde{\omega}: [0,\infty) \to [0, \infty)$ such that for a.e. $ (\omega,t)\in \Omega\times [0,T]$, and all $x,\ \tilde{x}\in\mrn$, $u,\ \tilde{u}\in U$, $\varphi = b,\ \sigma,\ f$,
             $$\left\{
             \begin{array}{l}
             |\varphi(t,0,0)|\le L,\
             |\varphi(t,x, u)-\varphi(t,\tilde{x}, \tilde{u})|\le L|x-\tilde{x}|+ \tilde{\omega}(|u-\tilde{u}|),\\
             |\varphi_{x}(t,x, u)-\varphi_{x}(t,\tilde{x}, \tilde{u})|
             \le L|x-\tilde{x}|+ \tilde{\omega}(|u-\tilde{u}|),\\
             |\varphi_{xx}(t,x, u)-\varphi_{xx}(t,\tilde{x}, \tilde{u})|
             \le L|x-\tilde{x}|+ \tilde{\omega}(|u-\tilde{u}|),\\
             |\varphi_{xxx}(t,x, u)-\varphi_{xxx}(t,\tilde{x}, \tilde{u})|
             \le L|x-\tilde{x}|+ \tilde{\omega}(|u-\tilde{u}|),\\
             |\varphi_{xxxx}(t,x, u)-\varphi_{xxxx}(t,\tilde{x}, \tilde{u})|\le L|x-\tilde{x}|+\tilde{\omega}(|u-\tilde{u}|).\\
             \end{array}\right.
             $$
       \item $h(\cdot)$ is continuously differentiable up to the forth order (a.s.), and there exists a constant $L > 0$ such that for any $x\in \mrn$,
           \begin{eqnarray*}
           & &|h(x)|\le L(1+|x|^{4}), \quad\ \ \ |h_{x}(x)|\le L(1+|x|^3),\quad \\
           & &|h_{xx}(x)|\le L(1+|x|^{2}), \quad |h_{xxx}(x)|\le L(1+|x|),\quad |h_{xxxx}(x)|\le L,\ \ a.s.
           \end{eqnarray*}
           \end{enumerate}
\end{enumerate}
Obviously, for $\varphi = b,\ \sigma,\ f$, when (C1)--(C2) are satisfied, for a.e. $(\omega,t)\in \Omega\times[0,T]$,
$$|\varphi_{x}(t,x, u)|+ |\varphi_{xx}(t,x, u)|+|\varphi_{xxx}(t,x, u)|+
|\varphi_{xxxx}(t,x, u)|\le L,$$
for all $(x,u)\in \mrn\times U$, and the controlled stochastic differential equation (\ref{controlsys}) admits a unique solution for any $u(\cdot)\in \mmu_{ad}$ and the cost functional is well-defined.

Let $(\bar{x}(\cdot),\bar{u}(\cdot))$ be an optimal pair, $u(\cdot)\in \mathcal{U}_{ad}$ be an admissible control, $E_{\varepsilon} \subset [0,T]$ be a measurable set with measure $|E_{\varepsilon}|=\varepsilon$ for a given $\varepsilon\in (0,T)$. Define
$$
u^{\varepsilon}(t)=\left\{
\begin{array}{l}
u(t), \qquad\qquad t\in E_\varepsilon,\\
\bar{u}(t), \qquad \qquad t\in [0,T] \setminus E_{\varepsilon}.\\
\end{array}\right.
$$
Let $x^{\varepsilon}(\cdot)$ be the state with respect to  the control
$u^{\varepsilon}(\cdot)$ and let $\delta x(\cdot)=x^{\varepsilon}(\cdot)-\bar{x}(\cdot)$. For $\varphi=b,\ \sigma,\ f$, write
$\varphi_{x}(t)=\varphi_{x}(t,\bar{x}(t),\bar{u}(t))$,
$\varphi_{xx}(t)=\varphi_{xx}(t,\bar{x}(t),\bar{u}(t))$,
$\varphi_{xxx}(t)=\varphi_{xxx}(t,\bar{x}(t),\bar{u}(t))$,
$\varphi_{xxxx}(t)=\varphi_{xxxx}(t,\bar{x}(t),\bar{u}(t))$,
and put
$$
\begin{array}{ll}
\delta \varphi(t)=\varphi(t,\bar{x}(t),u(t))-\varphi(t,\bar{x}(t),\bar{u}(t)),\\
\delta \varphi_{x}(t)
=\varphi_{x}(t,\bar{x}(t),u(t))-\varphi_{x}(t,\bar{x}(t),\bar{u}(t)),\\
\delta \varphi_{xx}(t)
=\varphi_{xx}(t,\bar{x}(t),u(t))-\varphi_{xx}(t,\bar{x}(t),\bar{u}(t)),\\
\delta\varphi_{xxx}(t)
=\varphi_{xxx}(t,\bar{x}(t),u(t))-\varphi_{xxx}(t,\bar{x}(t),\bar{u}(t)).
\end{array}
$$
Now, we introduce the following four variational equations:
\begin{equation}\label{firstvariequ}
\left\{
\begin{array}{l}
dy_{1}^{\varepsilon}(t)= b_{x}(t) y_{1}^{\varepsilon}(t)dt
+\Big[\sigma_{x}(t) y_{1}^{\varepsilon}(t)+ \delta\sigma(t)\chi_{E_{\varepsilon}}(t)\Big]dW(t),\quad t\in [0,T],  \\
y_{1}^{\varepsilon}(0)=0;
\end{array}\right.
\end{equation}
\begin{equation}\label{secondvariequ}
\qquad\left\{
\begin{array}{l}
dy_{2}^{\varepsilon}(t)= \Big[b_{x}(t)
y_{2}^{\varepsilon}(t)+\frac{1}{2}b_{xx}(t)
\big(y_{1}^{\varepsilon}(t),y_{1}^{\varepsilon}(t)\big)
+\delta b(t)\chi_{E_{\varepsilon}}(t)\Big]dt\\[+0.8em]
\qquad\qquad
+\Big[\sigma_{x}(t) y_{2}^{\varepsilon}(t)
+\frac{1}{2}\sigma_{xx}(t)\big(y_{1}^{\varepsilon}(t),y_{1}^{\varepsilon}(t)\big)\\[+0.8em]
\qquad\qquad\qquad\qquad\qquad\qquad\qquad\ \ \ \
+\delta\sigma_{x}(t)y_{1}^{\varepsilon}(t)\chi_{E_{\varepsilon}}(t)\Big]dW(t),\ t\in [0,T], \\[+0.8em]
y_{2}(0)=0;
\end{array}\right.
\end{equation}
\begin{equation}\label{thirdvariequ}
\quad\quad \left\{
\begin{array}{l}
dy_{3}^{\varepsilon}(t)= \Big\{b_{x}(t)y_{3}^{\varepsilon}(t)
+\frac{1}{2}\big[
b_{xx}(t)\big(y_{1}^{\varepsilon}(t)+y_{2}^{\varepsilon}(t), y_{2}^{\varepsilon}(t)\big)\\[+0.8em]
\qquad\qquad
+b_{xx}(t)\big(y_{2}^{\varepsilon}(t),y_{1}^{\varepsilon}(t)\big) \big]
+\frac{1}{6}b_{xxx}(t)
\big(y_{1}^{\varepsilon}(t),y_{1}^{\varepsilon}(t),y_{1}^{\varepsilon}(t)\big)\\[+0.8em]
\qquad \qquad
+\delta b_{x}(t) y_{1}^{\varepsilon}(t) \chi_{E_{\varepsilon}}(t)\Big\}dt\\[+0.8em]
\qquad \qquad
+\Big\{\sigma_{x}(t) y_{3}^{\varepsilon}(t)
+ \frac{1}{2}\big[\sigma_{xx}(t)\big(y_{1}^{\varepsilon}(t)+y_{2}^{\varepsilon}(t), y_{2}^{\varepsilon}(t)\big)\\[+0.8em]
\qquad \qquad
+ \sigma_{xx}(t)\big(y_{2}^{\varepsilon}(t),y_{1}^{\varepsilon}(t)\big)\big]
+\frac{1}{6}\sigma_{xxx}(t)
\big(y_{1}^{\varepsilon}(t),y_{1}^{\varepsilon}(t),y_{1}^{\varepsilon}(t)\big)\\[+0.8em]
\qquad \qquad
+\delta \sigma_{x}(t) y_{2}^{\varepsilon}(t) \chi_{E_{\varepsilon}}(t)
+\frac{1}{2}\delta\sigma_{xx}(t)\big(y_{1}^{\varepsilon}(t),y_{1}^{\varepsilon}(t)\big)
\chi_{E_{\varepsilon}}(t)\Big\}dW(t),\ \ \ \ \\[+0.8em]
\qquad \qquad t\in [0,T],\\[+0.8em]
y_{3}^{\varepsilon}(0)=0;
\end{array}\right.
\end{equation}
\begin{equation}\label{forthvariequ}
\qquad\left\{
\begin{array}{l}
dy_{4}^{\varepsilon}(t)= \Big\{b_{x}(t) y_{4}^{\varepsilon}(t)
+\frac{1}{2}\big[b_{xx}(t)\big(y_{1}^{\varepsilon}(t)+ y_{2}^{\varepsilon}(t) +y_{3}^{\varepsilon}(t), y_{3}^{\varepsilon}(t)\big)\\[+0.8em]
\qquad\qquad
+b_{xx}(t)\big(y_{3}^{\varepsilon}(t),y_{1}^{\varepsilon}(t)+ y_{2}^{\varepsilon}(t)\big)
\big]
+\frac{1}{6}\big[b_{xxx}(t)\big(y_{1}^{\varepsilon}(t),y_{1}^{\varepsilon}(t),
y_{2}^{\varepsilon}(t)\big)\\[+0.8em]
\qquad\qquad
+b_{xxx}(t)\big(y_{1}^{\varepsilon}(t),y_{2}^{\varepsilon}(t),
y_{1}^{\varepsilon}(t)\big)
+b_{xxx}(t)\big(y_{2}^{\varepsilon}(t),y_{1}^{\varepsilon}(t),
y_{1}^{\varepsilon}(t)\big)\\[+0.8em]
\qquad\qquad
+b_{xxx}(t)\big(y_{1}^{\varepsilon}(t),y_{2}^{\varepsilon}(t),
y_{2}^{\varepsilon}(t)\big)
+b_{xxx}(t)\big(y_{2}^{\varepsilon}(t),y_{2}^{\varepsilon}(t),
y_{1}^{\varepsilon}(t)\big)\\[+0.8em]
\qquad\qquad
+b_{xxx}(t)\big(y_{2}^{\varepsilon}(t),y_{1}^{\varepsilon}(t),
y_{2}^{\varepsilon}(t)\big)
+b_{xxx}(t)\big(y_{2}^{\varepsilon}(t),y_{2}^{\varepsilon}(t),
y_{2}^{\varepsilon}(t)\big)\big]\\[+0.8em]
\qquad\qquad
+\frac{1}{24}b_{xxxx}(t)\big( y_{1}^{\varepsilon}(t), y_{1}^{\varepsilon}(t), y_{1}^{\varepsilon}(t), y_{1}^{\varepsilon}(t)\big)\\[+0.8em]
\qquad\qquad
+\delta b_{x}(t) y_{2}^{\varepsilon}(t) \chi_{E_{\varepsilon}}(t)
+\frac{1}{2}\delta b_{xx}(t) \big(y_{1}^{\varepsilon}(t),y_{1}^{\varepsilon}(t)\big)
\chi_{E_{\varepsilon}}(t)\Big\}dt\\[+0.8em]
\qquad\qquad
+\Big\{\sigma_{x}(t) y_{4}^{\varepsilon}(t)
+\frac{1}{2}\big[\sigma_{xx}(t)\big(y_{1}^{\varepsilon}(t)+ y_{2}^{\varepsilon}(t) +y_{3}^{\varepsilon}(t), y_{3}^{\varepsilon}(t)\big)\\[+0.8em]
\qquad\qquad
+\sigma_{xx}(t)\big(y_{3}^{\varepsilon}(t),y_{1}^{\varepsilon}(t)+ y_{2}^{\varepsilon}(t)\big)
\big]
+\frac{1}{6}\big[\sigma_{xxx}(t)\big(y_{1}^{\varepsilon}(t),y_{1}^{\varepsilon}(t),
y_{2}^{\varepsilon}(t)\big)\\[+0.8em]
\qquad\qquad
+\sigma_{xxx}(t)\big(y_{1}^{\varepsilon}(t),y_{2}^{\varepsilon}(t),
y_{1}^{\varepsilon}(t)\big)
+\sigma_{xxx}(t)\big(y_{2}^{\varepsilon}(t),y_{1}^{\varepsilon}(t),
y_{1}^{\varepsilon}(t)\big)\\[+0.8em]
\qquad\qquad
+\sigma_{xxx}(t)\big(y_{1}^{\varepsilon}(t),y_{2}^{\varepsilon}(t),
y_{2}^{\varepsilon}(t)\big)
+\sigma_{xxx}(t)\big(y_{2}^{\varepsilon}(t),y_{2}^{\varepsilon}(t),
y_{1}^{\varepsilon}(t)\big)\\[+0.8em]
\qquad\qquad
+\sigma_{xxx}(t)\big(y_{2}^{\varepsilon}(t),y_{1}^{\varepsilon}(t),
y_{2}^{\varepsilon}(t)\big)
+\sigma_{xxx}(t)\big(y_{2}^{\varepsilon}(t),y_{2}^{\varepsilon}(t),
y_{2}^{\varepsilon}(t)\big)\big]\\[+0.8em]
\qquad \qquad
+\frac{1}{24}\sigma_{xxxx}(t)\big(y_{1}^{\varepsilon}(t),y_{1}^{\varepsilon}(t),
y_{1}^{\varepsilon}(t),y_{1}^{\varepsilon}(t)\big)
+\delta \sigma_{x}(t) y_{3}^{\varepsilon}(t) \chi_{E_{\varepsilon}}(t) \\[+0.8em]
\qquad \qquad +\frac{1}{2}\big[\delta\sigma_{xx}(t)\big(y_{1}^{\varepsilon}(t)+y_{2}^{\varepsilon}(t), y_{2}^{\varepsilon}(t)\big)
+\delta\sigma_{xx}(t)\big(y_{2}^{\varepsilon}(t),y_{1}^{\varepsilon}(t)
\big)\big]\chi_{E_{\varepsilon}}(t)\\[+0.8em]
\qquad\qquad
+\frac{1}{6}\delta\sigma_{xxx}(t)\big(y_{1}^{\varepsilon}(t),
y_{1}^{\varepsilon}(t),y_{1}^{\varepsilon}(t)\big)
\chi_{E_{\varepsilon}}(t)\Big\}dW(t), \quad t\in [0,T],\\[+0.8em]
y_{4}^{\varepsilon}(0)=0.
\end{array}\right.
\end{equation}

Denote
\begin{center}
\setlength{\tabcolsep}{0.5pt}
\begin{tabular*}{13cm}{@{\extracolsep{\fill}}lllr}
& $\xi(\cdot):=y_{1}^{\varepsilon}(\cdot)+y_{2}^{\varepsilon}(\cdot)
+y_{3}^{\varepsilon}(\cdot)+y_{4}^{\varepsilon}(\cdot)$,
& $\eta(\cdot):=y_{1}^{\varepsilon}(\cdot)+y_{2}^{\varepsilon}(\cdot)
+y_{3}^{\varepsilon}(\cdot)$,\\[+0.5em]
& $\gamma(\cdot):=y_{1}^{\varepsilon}(\cdot)+y_{2}^{\varepsilon}(\cdot)$,
& $r_{1}(\cdot):=\delta x(\cdot)-y_{1}^{\varepsilon}(\cdot)$,\\[+0.5em]
& $r_{2}(\cdot):=\delta x(\cdot)-\gamma(\cdot)$,
& $r_{3}(\cdot):=\delta x(\cdot)-\eta(\cdot)$,\\[+0.5em]
& $r_{4}(\cdot):=\delta x(\cdot)-\xi(\cdot)$.  &$ $
\end{tabular*}
\end{center}

From
(\ref{firstvariequ})--(\ref{forthvariequ}) and Lemma \ref{estimatelinearsde}, we obtain the following result.

\begin{lemma}\label{estimateofvariequ}
Let (C1) and (C2) hold. Then, for any $\beta\ge 1$, $\ephs\in (0,T)$, $\ephs\to 0^+$, the following estimates hold:
\begin{center}
\setlength{\tabcolsep}{0.5pt}
\begin{tabular*}{12cm}{@{\extracolsep{\fill}}lllr}
& $\|y_{1}^{\varepsilon}\|_{\infty,\beta}^{\beta}\le C\varepsilon^{\frac{\beta}{2}}$,
& $\|r_{1}\|_{\infty,\beta}^{\beta}\le C \varepsilon^{\beta}$,\\
& $\|y_{2}^{\varepsilon}\|_{\infty,\beta}^{\beta}\le C\varepsilon^{\beta}$,
& $\|r_{2}\|_{\infty,\beta}^{\beta}\le C\varepsilon^{\frac{3\beta}{2}}$,\\
& $\|y_{3}^{\varepsilon}\|_{\infty,\beta}^{\beta}\le C\varepsilon^{\frac{3\beta}{2}}$,
& $\|r_{3}\|_{\infty,\beta}^{\beta}\le C\varepsilon^{2\beta}$,\\
& $\|y_{4}^{\varepsilon}\|_{\infty,\beta}^{\beta}\le C \varepsilon^{2\beta}$,
& $\|r_{4}\|_{\infty,\beta}^{\beta}\le C\varepsilon^{\frac{5\beta}{2}}$,\\
& $\|\delta x\|_{\infty,\beta}^{\beta}\le \varepsilon^{\frac{\beta}{2}}$.
& \\
\end{tabular*}
\end{center}
\end{lemma}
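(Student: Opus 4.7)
The strategy is to apply the $L^\beta$-estimate (\ref{estimateofx}) from Lemma \ref{estimatelinearsde} iteratively to a chain of linear SDEs. Each of (\ref{firstvariequ})--(\ref{forthvariequ}) is a linear SDE in the unknown $y_j^\varepsilon$ with bounded coefficients $b_x(t),\sigma_x(t)$ by hypothesis (C2)(ii); hence everything reduces to controlling the $L^\beta$-norms of the inhomogeneous drift and diffusion forcings, which are built from already-estimated quantities ($y_1^\varepsilon,\ldots,y_{j-1}^\varepsilon$), bounded differences ($\delta b,\delta\sigma,\delta b_x$, etc.), and the indicator $\chi_{E_\varepsilon}$. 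For the remainders $r_j$, one derives the SDE satisfied by $r_j$ by subtracting the equations for $y_1^\varepsilon+\cdots+y_j^\varepsilon$ from that for $\delta x$ and Taylor-expanding $b(t,x^\varepsilon,u^\varepsilon)$ and $\sigma(t,x^\varepsilon,u^\varepsilon)$ about $(\bar x,u^\varepsilon)$ to order $j$; this again gives a linear SDE in $r_j$ with bounded coefficients plus an inhomogeneity of the claimed order.

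First I estimate $\delta x$ from (\ref{continuousofxwithx0andu}): the integrands $|\sigma(s,\bar x,u^\varepsilon)-\sigma(s,\bar x,\bar u)|$ and $|b(s,\bar x,u^\varepsilon)-b(s,\bar x,\bar u)|$ are uniformly bounded and supported on $E_\varepsilon$, so the $L^2$-in-time integral is $O(\varepsilon)$ and its $\beta/2$-power gives $\varepsilon^{\beta/2}$, which dominates the $\varepsilon^\beta$ coming from the drift. Next, for $y_1^\varepsilon$ the diffusion forcing $\delta\sigma(t)\chi_{E_\varepsilon}$ is of the same type, giving $\|y_1^\varepsilon\|_{\infty,\beta}^\beta\le C\varepsilon^{\beta/2}$. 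For $y_2^\varepsilon$ the forcing contains the quadratic term $b_{xx}(y_1^\varepsilon,y_1^\varepsilon)$ (of $\|\cdot\|_{\infty,\beta}^\beta$-order $\varepsilon^\beta$ via $\me\sup|y_1^\varepsilon|^{2\beta}\le C\varepsilon^\beta$), the bounded term $\delta b\,\chi_{E_\varepsilon}$ (drift-integral of order $\varepsilon^\beta$), and the diffusion cross term $\delta\sigma_x y_1^\varepsilon\chi_{E_\varepsilon}$ (contributing $\varepsilon^{\beta/2}\cdot\varepsilon^{\beta/2}=\varepsilon^\beta$ after the $L^2$-in-time integration). Thus $\|y_2^\varepsilon\|_{\infty,\beta}^\beta\le C\varepsilon^\beta$, and analogous induction yields the stated orders for $y_3^\varepsilon$ and $y_4^\varepsilon$.

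For the remainder $r_j$, I decompose
\[
b(t,x^\varepsilon,u^\varepsilon)-b(t,\bar x,\bar u)=\big[b(t,x^\varepsilon,u^\varepsilon)-b(t,\bar x,u^\varepsilon)\big]+\delta b(t)\chi_{E_\varepsilon}(t),
\]
and similarly for $\sigma$, and Taylor-expand the first bracket to order $j$ in $\delta x$, using $b_x(t,\bar x,u^\varepsilon)=b_x(t)+\delta b_x(t)\chi_{E_\varepsilon}(t)$ and analogous identities for $b_{xx},b_{xxx}$. One verifies that the linear, quadratic, \ldots, $j$th-order contributions cancel exactly the terms appearing in the drifts/diffusions of (\ref{firstvariequ})--(\ref{forthvariequ}) (with $\delta x$ in place of $y_1^\varepsilon+\cdots+y_j^\varepsilon$), modulo mismatches of the form $b_{xx}(\delta x,\delta x)-b_{xx}(y_1^\varepsilon,y_1^\varepsilon)=b_{xx}(\delta x+y_1^\varepsilon,\delta x-y_1^\varepsilon)$, which, thanks to the already-established estimates on $r_1,\ldots,r_{j-1}$ and on $\delta x,y_i^\varepsilon$, are of order $\varepsilon^{(j+1)\beta/2}$ in $\|\cdot\|_{\infty,\beta}^\beta$. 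Applying (\ref{estimateofx}) to the resulting linear SDE for $r_j$ then closes the induction.

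The main obstacle is not analytic but combinatorial: matching the terms in the Taylor expansion of $b$ and $\sigma$ against the precise algebraic shape of (\ref{secondvariequ})--(\ref{forthvariequ})—in particular keeping track of all cross-terms of the form $\delta b_x\chi_{E_\varepsilon}$, $\delta b_{xx}\chi_{E_\varepsilon}$, $\delta\sigma_{xx}\chi_{E_\varepsilon}$, etc.—so that the inhomogeneity left for $r_j$ is genuinely of order $\varepsilon^{(j+1)\beta/2}$. Once this bookkeeping is in place, the $L^\beta$-bounds follow mechanically from the Burkholder--Davis--Gundy inequality already built into Lemma \ref{estimatelinearsde}.
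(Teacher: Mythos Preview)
Your sketch is correct and follows essentially the same route as the paper's proof: iterative application of the $L^\beta$-estimate (\ref{estimateofx}) from Lemma \ref{estimatelinearsde} to the linear SDEs (\ref{firstvariequ})--(\ref{forthvariequ}) for the $y_j^\varepsilon$, then derivation of linear SDEs for the $r_j$ via successive Taylor expansions of $b(t,x^\varepsilon,u^\varepsilon)$ and $\sigma(t,x^\varepsilon,u^\varepsilon)$, with the residual inhomogeneities controlled by factorizations such as $\delta x^2-y_1^{\varepsilon\,2}=r_1(\delta x+y_1^\varepsilon)$ and the previously established bounds. The paper organizes the bookkeeping into four explicit steps (the $y_j^\varepsilon$; then $\delta x$ and $r_1,r_2,r_3$; then the auxiliary differences $\delta x^2-\eta^2$, $\delta x^3-\gamma^3$, $\delta x^4-y_1^{\varepsilon\,4}$; finally $r_4$), but the content is exactly what you describe.
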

\begin{proof}
See Appendix A.
\end{proof}

Further, we obtain the following Taylor expansion for the cost functional with respect to the control perturbation.
\begin{lemma}\label{taylorexplemma}
Let (C1) and (C2) hold. Then,
\begin{eqnarray}\label{taylorexp}
& & J(u^{\varepsilon})-J(\bar{u})\nonumber\\
&=& \me\int_{0}^{T}\Big[f_x(t)\xi(t)
+\frac{1}{2}f_{xx}(t)\big(\eta(t),\eta(t)\big)
+\frac{1}{6}f_{xxx}(t)\big(\gamma(t),\gamma(t),\gamma(t)\big)\nonumber\\
& &\qquad
+\frac{1}{24}f_{xxxx}(t)\big(y_{1}^{\varepsilon}(t),y_{1}^{\varepsilon}(t),
y_{1}^{\varepsilon}(t),y_{1}^{\varepsilon}(t)\big)+\delta f(t)\chi_{E_{\varepsilon}}(t)\nonumber\\
& &\qquad
+\delta f_x(t)\gamma(t)\chi_{E_{\varepsilon}}(t)
+\frac{1}{2}\delta f_{xx}(t)\big(y_{1}^{\varepsilon}(t),y_{1}^{\varepsilon}(t)\big)
\chi_{E_{\varepsilon}}(t)\Big]dt\nonumber\\
& &\qquad + \me \Big[h_{x}(\bar{x}(T))\xi(T)+\frac{1}{2}h_{xx}(\bar{x}(T))\big(\eta(T),\eta(T)\big)
\nonumber\\
& &\qquad
+\frac{1}{6}h_{xxx}(\bar{x}(T))\big(\gamma(T),\gamma(T),\gamma(T)\big)\nonumber\\
& &\qquad +\frac{1}{24}h_{xxxx}(\bar{x}(T))\big(y_{1}^{\varepsilon}(T),y_{1}^{\varepsilon}(T),
y_{1}^{\varepsilon}(T),y_{1}^{\varepsilon}(T)\big)\Big]
+ o(\varepsilon^2)\qquad (\ephs\to 0^+).
\end{eqnarray}
\end{lemma}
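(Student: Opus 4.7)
The plan is to apply a fourth-order Taylor expansion in $x$ to both $f(t,\cdot,u^\varepsilon(t))$ at $\bar x(t)$ and $h(\cdot)$ at $\bar x(T)$, replace $\delta x$ at each order by the appropriate partial sum of the $y_i^\varepsilon$'s, and then absorb the cross terms into $o(\varepsilon^2)$ using Lemma~\ref{estimateofvariequ}. First I would write
\[
J(u^\varepsilon)-J(\bar u)=\me\int_0^T\bigl[f(t,x^\varepsilon(t),u^\varepsilon(t))-f(t,\bar x(t),\bar u(t))\bigr]dt+\me\bigl[h(x^\varepsilon(T))-h(\bar x(T))\bigr],
\]
split the running integrand as $\bigl[f(t,\bar x+\delta x,u^\varepsilon)-f(t,\bar x,u^\varepsilon)\bigr]+\delta f(t)\chi_{E_\varepsilon}(t)$, and expand the first bracket by the integral form of Taylor's formula in $x$ up to order four, with remainder $R=\int_0^1\frac{(1-s)^3}{6}\bigl[f_{xxxx}(t,\bar x+s\delta x,u^\varepsilon)-f_{xxxx}(t,\bar x,u^\varepsilon)\bigr](\delta x)^4\,ds$. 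Using the identity $\varphi_\ast(t,\bar x(t),u^\varepsilon(t))=\varphi_\ast(t)+\delta\varphi_\ast(t)\chi_{E_\varepsilon}(t)$ for each derivative $\varphi_\ast\in\{f_x,f_{xx},f_{xxx},f_{xxxx}\}$, the expansion naturally splits into a ``pure $\bar u$'' part and a $\chi_{E_\varepsilon}$-part.

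Next, at each order I substitute for $\delta x$ the partial sum of $y_i^\varepsilon$'s whose remainder is small enough: in the pure $\bar u$ part I use $\delta x=\xi+r_4$ in the linear term, $\delta x=\eta+r_3$ in the quadratic term, $\delta x=\gamma+r_2$ in the cubic term, and $\delta x=y_1^\varepsilon+r_1$ in the quartic term; this yields respectively the leading contributions $f_x(t)\xi$, $\frac{1}{2}f_{xx}(t)(\eta,\eta)$, $\frac{1}{6}f_{xxx}(t)(\gamma,\gamma,\gamma)$, $\frac{1}{24}f_{xxxx}(t)(y_1^\varepsilon)^4$. In the $\chi_{E_\varepsilon}$-part I use one fewer order ($\delta x=\gamma+r_2$ in the $\delta f_x$ term and $\delta x=y_1^\varepsilon+r_1$ in the $\frac{1}{2}\delta f_{xx}$ term), matching the stated formula, and the $\delta f_{xxx}$ and $\delta f_{xxxx}$ pieces are pushed into the remainder since the $\varepsilon$-factor supplied by the indicator together with $\|\delta x\|_{\infty,\beta}^\beta=O(\varepsilon^{\beta/2})$ forces them below order $\varepsilon^2$. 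Lemma~\ref{estimateofvariequ} supplies the hierarchy $\|y_i^\varepsilon\|_{\infty,\beta}=O(\varepsilon^{i/2})$ and $\|r_i\|_{\infty,\beta}=O(\varepsilon^{(i+1)/2})$ for $i=1,2,3,4$; each cross term between a bounded derivative of $f$ and factors drawn from $\{y_i^\varepsilon,r_i,\delta x\}$ is then controlled by a H\"older split chosen so that the exponents add up to a power greater than $2$, giving $O(\varepsilon^{5/2})=o(\varepsilon^2)$. The same scheme, applied to $h$ at $\bar x(T)$, produces the terminal contributions.

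The hard part is purely combinatorial: the quartic expansion together with the $\chi_{E_\varepsilon}$-decomposition produces a large number of cross terms, and each one has to be checked to be $o(\varepsilon^2)$ under an appropriate choice of H\"older exponents. The most delicate are the $\chi_{E_\varepsilon}$-carrying errors, where the $|E_\varepsilon|=\varepsilon$ supplied by the indicator has to be balanced, via Cauchy--Schwarz, against the $L^\beta$-suprema of the various $y_i^\varepsilon$'s and $r_i$'s; and the Taylor remainder $R$, which is handled by combining the Lipschitz bound on $f_{xxxx}$ in (C2)(ii) with $\me\int_0^T|\delta x(t)|^5\,dt=O(\varepsilon^{5/2})$. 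Since the argument is long but essentially mechanical, I would relegate the detailed verification to an appendix, following the strategy used in \cite{zhangH14a}.
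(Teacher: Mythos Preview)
Your proposal is correct and follows essentially the same route as the paper: a fourth-order Taylor expansion in $x$ with the coefficients split via $\varphi_\ast(t,\bar x(t),u^\varepsilon(t))=\varphi_\ast(t)+\delta\varphi_\ast(t)\chi_{E_\varepsilon}(t)$, followed by the substitution $\delta x\approx\xi,\eta,\gamma,y_1^\varepsilon$ at the respective orders and error control via Lemma~\ref{estimateofvariequ}. The only cosmetic difference is that the paper first writes the three-way decomposition $f(t,x^\varepsilon,u^\varepsilon)-f(t,\bar x,\bar u)=[f(t,x^\varepsilon,\bar u)-f(t,\bar x,\bar u)]+\delta f\chi_{E_\varepsilon}+[\text{mixed}]$ and expands the first and third brackets separately, whereas you expand $f(t,\bar x+\delta x,u^\varepsilon)-f(t,\bar x,u^\varepsilon)$ in one shot and then split the coefficients; the two computations are algebraically identical.
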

\begin{proof}
Similar to the proof of Lemma \ref{estimateofvariequ}, we only consider the $1$-dimensional case.
By Taylor's formulation,
\begin{eqnarray*}
& & f(t,x^{\varepsilon}(t),u^{\varepsilon}(t))-f(t,\bar{x}(t),\bar{u}(t))\\
&=& f(t,x^{\varepsilon}(t),\bar{u}(t))-f(t,\bar{x}(t),\bar{u}(t))
+f(t,\bar{x}(t),u^{\varepsilon}(t))-f(t,\bar{x}(t),\bar{u}(t))\\
& &+f(t,x^{\varepsilon}(t),u^{\varepsilon}(t))-f(t,\bar{x}(t),u^{\varepsilon}(t))
-f(t,x^{\varepsilon}(t),\bar{u}(t))+f(t,\bar{x}(t),\bar{u}(t))\\
&=& f_x(t)\delta x(t)+\frac{1}{2}f_{xx}(t)\delta x(t)^2
+\frac{1}{6}f_{xxx}(t)\delta x(t)^3\\
& & +\frac{1}{6}\int_{0}^{1}\theta^3 f_{xxxx}(t,\theta\bar{x}(t)+
(1-\theta)x^{\varepsilon}(t),\bar{u}(t))\delta x(t)^4d\theta
+\delta f(t)\chi_{E_{\varepsilon}}(t)\\
& &
+f_{x}(t,\bar{x}(t),u^{\varepsilon}(t))\delta x(t)
+\frac{1}{2}f_{xx}(t,\bar{x}(t),u^{\varepsilon}(t))\delta x(t)^2\\
& &+ \frac{1}{2}\int_{0}^{1}\theta^2 f_{xxx}(t,\theta\bar{x}(t)+
(1-\theta)x^{\varepsilon}(t),u^{\varepsilon}(t))\delta x(t)^3 d\theta
-f_{x}(t)\delta x(t)\\
& &
-\frac{1}{2}f_{xx}(t)\delta x(t)^2
-\frac{1}{2}\int_{0}^{1}\theta^2 f_{xxx}(t,\theta\bar{x}(t)+
(1-\theta)x^{\varepsilon}(t),\bar{u}(t))\delta x(t)^3 d\theta\\
&=& f_x(t)\delta x(t)+\frac{1}{2}f_{xx}(t)\delta x(t)^2
+\frac{1}{6}f_{xxx}(t)\delta x(t)^3\\
& & +\frac{1}{6}\int_{0}^{1}\theta^3 f_{xxxx}(t,\theta\bar{x}(t)+
(1-\theta)x^{\varepsilon}(t),\bar{u}(t))\delta x(t)^4d\theta\\
& &+\delta f(t)\chi_{E_{\varepsilon}}(t)
+\delta f_x(t)\delta x(t)\chi_{E_{\varepsilon}}(t)
+\frac{1}{2}\delta f_{xx}(t)\delta x(t)^2\chi_{E_{\varepsilon}}(t)\\
& &+ \frac{1}{2}\int_{0}^{1}\theta^2 \Big( f_{xxx}(t,\theta\bar{x}(t)+
(1-\theta)x^{\varepsilon}(t),u^{\varepsilon}(t))\\
& &\qquad\qquad\qquad -f_{xxx}(t,\theta\bar{x}(t)
+(1-\theta)x^{\varepsilon}(t),\bar{u}(t))\Big)\delta x(t)^3 d\theta,
\end{eqnarray*}
and
\begin{eqnarray*}
& & h(x^{\varepsilon}(T))-h(\bar{x}(T))\\
&=& h_{x}(\bar{x}(T))\delta x(T)+\frac{1}{2}h_{xx}(\bar{x}(T))\delta x(T)^2
+\frac{1}{6}h_{xxx}(\bar{x}(T))\delta x(T)^3\\
& & +\frac{1}{6}\int_{0}^{1}\theta^3 h_{xxxx}(\theta\bar{x}(T)
+(1-\theta)x^{\varepsilon}(T))\delta x(T)^4d\theta.
\end{eqnarray*}

By Lemma \ref{estimateofvariequ},
\begin{eqnarray*}
& & J(u^{\varepsilon})-J(\bar{u})\\
&=& \me\int_{0}^{T}\Big[f_x(t)\xi(t)
+\frac{1}{2}f_{xx}(t)\eta(t)^2+\frac{1}{6}f_{xxx}(t)\gamma(t)^3
+\frac{1}{24}f_{xxxx}(t)y_{1}^{\varepsilon}(t)^4\\
& &+\delta f(t)\chi_{E_{\varepsilon}}(t)
+\delta f_x(t)\gamma(t)\chi_{E_{\varepsilon}}(t)+\frac{1}{2}\delta f_{xx}(t)y_{1}^{\varepsilon}(t)^2\chi_{E_{\varepsilon}}(t)\Big]dt\\
& &+\me ~\Big[h_{x}(\bar{x}(T))\xi(T)+\frac{1}{2}h_{xx}(\bar{x}(T))\eta(T)^2 \\
& &\qquad\qquad\qquad\qquad +\frac{1}{6}h_{xxx}(\bar{x}(T))\gamma(T)^3
+\frac{1}{24}h_{xxxx}(\bar{x}(T))y_{1}^{\varepsilon}(T)^4\Big]\\
& &+\me\int_{0}^{T}\Big[f_x(t)r_{4}(t)+\frac{1}{2}f_{xx}(t)\big(\delta x(t)^2-\eta(t)^{2}\big)+\frac{1}{6}f_{xxx}(t)\big(\delta x(t)^3-\gamma(t)^{3}\big)\\
& & +\frac{1}{6}\int_{0}^{1}\theta^3 f_{xxxx}(t,\theta\bar{x}(t)+
(1-\theta)x^{\varepsilon}(t),\bar{u}(t))\delta x(t)^4d\theta
-\frac{1}{24}f_{xxxx}(t)y_{1}^{\varepsilon}(t)^4\\
& &+\delta f_x(t)r_{2}(t)\chi_{E_{\varepsilon}}(t)
+\frac{1}{2}\delta f_{xx}(t)\big(\delta x(t)^2
-y_{1}^{\varepsilon}(t)^2\big)\chi_{E_{\varepsilon}}(t)\\
& &+\frac{1}{2}\int_{0}^{1}\theta^{2} \Big( f_{xxx}(t,\theta\bar{x}(t)+
(1-\theta)x^{\varepsilon}(t),u^{\varepsilon}(t))\\
& &\qquad\qquad\qquad\qquad
-f_{xxx}(t,\theta\bar{x}(t)+(1-\theta)
x^{\varepsilon}(t),\bar{u}(t))\Big)\delta x(t)^3 d\theta\Big]dt\\
& &+\me~\Big[h_{x}(\bar{x}(T))r_{4}(T)
+\frac{1}{2}h_{xx}(\bar{x}(T))\big(\delta x(T)^2-\eta(T)^{2}\big)\\
& &
 +\frac{1}{6}h_{xxx}(\bar{x}(T))\big(\delta x(T)^3-\gamma(T)^{3}\big)\\
& &+\frac{1}{6}\int_{0}^{1}\theta^3 h_{xxxx}(\theta\bar{x}(T)
+(1-\theta)x^{\varepsilon}(T))\delta x(T)^4d\theta
-\frac{1}{24}h_{xxxx}(\bar{x}(T))y_{1}^{\varepsilon}(T)^4\Big]\\
&=& \me\int_{0}^{T}\Big[f_x(t)\xi(t)
+\frac{1}{2}f_{xx}(t)\eta(t)^2+\frac{1}{6}f_{xxx}(t)\gamma(t)^3
+\frac{1}{24}f_{xxxx}(t)y_{1}^{\varepsilon}(t)^4\\
& &+\delta f(t)\chi_{E_{\varepsilon}}(t)
+\delta f_x(t)\gamma(t)\chi_{E_{\varepsilon}}(t)+\frac{1}{2}\delta f_{xx}(t)y_{1}^{\varepsilon}(t)^2\chi_{E_{\varepsilon}}(t)\Big]dt\\
& &+\me~\Big[h_{x}(\bar{x}(T))\xi(T)+\frac{1}{2}h_{xx}(\bar{x}(T))\eta(T)^2\\
& &\quad\quad +\frac{1}{6}h_{xxx}(\bar{x}(T))\gamma(T)^3
+\frac{1}{24}h_{xxxx}(\bar{x}(T))y_{1}^{\varepsilon}(T)^4\Big]+ o(\varepsilon^2) \quad (\ephs\to 0^+). \\
\end{eqnarray*}
This completes the proof of Lemma \ref{taylorexplemma}. \end{proof}

To establish the variational formulation for the optimal control $\bar{u}(\cdot)$, in addition to the adjoint equations (\ref{firstajointequ})--(\ref{secondajointequ}), the following two adjoint equations are also needed:
\begin{equation}\label{thirdajointequ}
\qquad\left\{
\begin{array}{l}
dp_{3}(t)=-\Bigg[\sum\limits_{k=1}^{3}p_{3}(t)\mathop\circ\limits_{k}b_{x}(t)
+\sum\limits_{k=1}^{2}\sum\limits_{l=k+1}^{3}
p_{3}(t)\mathop\circ\limits_{k,l}\big(\sigma_{x}(t),\sigma_{x}(t)\big)\\[+1.5em]
\qquad\qquad
+\sum\limits_{k=1}^{3} q_{3}(t)\mathop\circ\limits_{k} \sigma_{x}(t)
+\frac{3}{2}\sum\limits_{k=1}^{2}\Big(p_{2}(t)\mathop\circ\limits_{k} b_{xx}(t)
+q_{2}(t)\mathop\circ\limits_{k}\sigma_{xx}(t)\Big)\\[+1.5em]
\qquad\qquad
+\frac{3}{2}\Big(p_{2}(t)\mathop\circ\limits_{1,2}\big(\sigma_{x}(t),\sigma_{xx}(t)\big)
+p_{2}(t)\mathop\circ\limits_{1,2}\big(\sigma_{xx}(t),\sigma_{x}(t)\big)\Big)\\[+1.5em]
\qquad\qquad\qquad\qquad\qquad\qquad
+\hh_{xxx}(t)\Bigg]dt+q_{3}(t)dW(t),\quad t\in [0,T],  \\[+1.3em]
p_{3}(T)=-h_{xxx}(\bar{x}(T));
\end{array}\right.
\end{equation}
and
\begin{equation}\label{forthajointequ}
\ \ \left\{
\begin{array}{l}
dp_{4}(t)=-\Bigg[\sum\limits_{k=1}^{4} p_{4}(t) \mathop\circ\limits_{k}b_{x}(t)
+\sum\limits_{k=1}^{3}\sum\limits_{l=k+1}^{4} p_{4}(t)\mathop\circ\limits_{k,l}\big(\sigma_{x}(t),\sigma_{x}(t)\big)\\[+1em]
\qquad\qquad
+\sum\limits_{k=1}^{4}q_{4}(t)\mathop\circ\limits_{k}\sigma_{x}(t)
+2\sum\limits_{k=1}^{3}\Big( p_{3}(t)\mathop\circ\limits_{k}b_{xx}(t)
+q_{3}(t)\mathop\circ\limits_{k} \sigma_{xx}(t)\Big)\\[+1em]
\qquad\qquad
+2 \sum\limits_{k=1}^{3}\sum\limits_{ l=1,\ l\neq k}^{3}p_{3}(t)\mathop\circ\limits_{k,l}\big(\sigma_{x}(t),\sigma_{xx}(t)\big)
+2\sum\limits_{k=1}^{2}p_{2}(t)\mathop\circ\limits_{k}b_{xxx}(t)\\[+1em]
\qquad\qquad
+2\Big(p_{2}(t)\mathop\circ\limits_{1,2}\big(\sigma_{x}(t),\sigma_{xxx}(t)\big)
+p_{2}(t)\mathop\circ\limits_{1,2}\big(\sigma_{xxx}(t),\sigma_{x}(t)\big)\Big)\\[+1em]
\qquad\qquad
+3p_{2}(t)\mathop\circ\limits_{1,2}\big(\sigma_{xx}(t),\sigma_{xx}(t)\big)
+2\sum\limits_{k=1}^{2}q_{2}(t) \mathop\circ\limits_{k}\sigma_{xxx}(t)\\[+1em]
\qquad\qquad\qquad\qquad\qquad
+\hh_{xxxx}(t)\Bigg]dt+q_{4}(t)dW(t),\quad t\in [0,T], \\[+1em]
p_{4}(T)=-h_{xxxx}(\bar{x}(T)),
\end{array}\right.
\end{equation}
where the Hamiltonian $\hh$ is defined by (\ref{Hamiltonian}), and
\begin{eqnarray*}
& &\hh_{xxx}(t)=\hh_{xxx}(\omega,t, \bar{x}(t),\bar{u}(t), p_{1}(t),q_{1}(t)),\\
& &\hh_{xxxx}(t)=\hh_{xxxx}(\omega,t, \bar{x}(t),\bar{u}(t), p_{1}(t),q_{1}(t)).
\end{eqnarray*}

By the existence and regularity results for BSDEs (see \cite{Peng97}),  for any $\beta\ge 1$, the adjoint  equations (\ref{thirdajointequ})--(\ref{forthajointequ}) admit unique solutions, respectively, and
\begin{eqnarray*}
& &(p_{3}(\cdot),q_{3}(\cdot))\in L_{\mmf}^{\beta}(\Omega; C([0,T]; L(\mathop\Pi\limits_{3}\mrn;\mr)))\times
L_{\mmf}^{\beta}(\Omega; L^{2}(0,T; L(\mathop\Pi\limits_{3}\mrn;\mr))),\\
& &(p_{4}(\cdot),q_{4}(\cdot))\in L_{\mmf}^{\beta}(\Omega; C([0,T]; L(\mathop\Pi\limits_{4}\mrn;\mr)))\times
L_{\mmf}^{\beta}(\Omega; L^{2}(0,T; L(\mathop\Pi\limits_{4}\mrn;\mr))).
\end{eqnarray*}

Using the Taylor expansion of the cost functional established in Lemma \ref{taylorexplemma} and the duality relationship between the variational equations (\ref{firstvariequ})--(\ref{forthvariequ}) and the adjoint equations (\ref{firstajointequ})--(\ref{secondajointequ}) and (\ref{thirdajointequ})--(\ref{forthajointequ}), we obtain a variational formulation for the cost functional. In order to short the expression of this formulation, we introduce some more notations.

Let the Hamiltonian be defined by (\ref{Hamiltonian}). Write
\begin{eqnarray*}
& & \mS(\omega, t,x,u, y_{2},z_{2})
=\frac{1}{2}\sum_{k=1}^{2}\Big[y_{2}\mathop\bullet\limits_{k} b(\omega,t,x,u)
+z_{2}\mathop\bullet\limits_{k} \sigma(\omega,t,x,u)
\Big],\\
& & \qquad\qquad
(\omega,t,x,u,y_{2},z_{2})\in \Omega\times[0,T]\times\mrn\times U\times L(\mathop\Pi\limits_{2}\mrn;\mr)\times L(\mathop\Pi\limits_{2}\mrn;\mr);\\
& & \mT(\omega,t,x,u, y_{3},z_{3})
=\frac{1}{3}\sum_{k=1}^{3}\Big[y_{3}\mathop\bullet\limits_{k} b(\omega,t,x,u)
+z_{3}\mathop\bullet\limits_{k} \sigma(\omega,t,x,u)
\Big],\\
& & \qquad\qquad
(\omega,t,x,u,y_{3},z_{3})\in \Omega\times[0,T]\times\mrn\times U\times L(\mathop\Pi\limits_{3}\mrn;\mr)\times L(\mathop\Pi\limits_{3}\mrn;\mr),
\end{eqnarray*}
and  denote
\begin{eqnarray*}
& & \mss(\omega,t,x,u)\\
&=&\mh_{x}(\omega,t,x,u)+\mS(\omega,t,x,u, p_{2}(t),q_{2}(t))
-\mS(\omega,t,x,\bar{u}(t), p_{2}(t),q_{2}(t))\\
& &
+\frac{1}{2}
\big[p_{2}(t)\mathop\circ\limits_{1}\sigma_{x}(\omega,t,x,\bar{u}(t))\big]
\mathop\bullet\limits_{2}\big(\sigma(\omega,t,x,u)-\sigma(\omega,t,x,\bar{u}(t))\big)\\
& &
+\frac{1}{2}
\big[p_{2}(t)\mathop\circ\limits_{2}\sigma_{x}(\omega,t,x,\bar{u}(t))\big]
\mathop\bullet\limits_{1}\big(\sigma(\omega,t,x,u)-\sigma(\omega,t,x,\bar{u}(t))\big)\\
& &
+\frac{1}{6}\sum\limits_{k=1}^{2}\sum\limits_{l=k+1}^{3}
p_{3}(t)\mathop\bullet\limits_{k,l}
\big(\sigma(\omega,t,x,u)-\sigma(\omega,t,x,\bar{u}(t)),
\sigma(\omega,t,x,u)-\sigma(\omega,t,x,\bar{u}(t))\big),\\[+0.8em]
& & \mt(\omega,t,x,u)\\
&=&\mss_{x}(\omega,t,x,u)+\mS_{x}(\omega,t,x,u, p_{2}(t),q_{2}(t))
-\mS_{x}(\omega,t,x,\bar{u}(t), p_{2}(t),q_{2}(t))\\
& &
+\mT(\omega,t,x,u, p_{3}(t),q_{3}(t))
 -\mT(\omega,t,x,\bar{u}(t), p_{3}(t),q_{3}(t))\\
& &
+\frac{1}{2}\sum\limits_{k=1}^{2}\sum\limits_{l=1,\ l\neq k}^{2}
p_{2}(t)\mathop\circ\limits_{k,l}\big(\sigma_{x}(\omega,t,x,\bar{u}(t)),\sigma_{x}(t,x,u)
-\sigma_{x}(\omega,t,x,\bar{u}(t))\big)\\
& &
+\frac{1}{6}\sum\limits_{k=1}^{3}\sum\limits_{l=1,\ l\neq k}^{3}\big[
p_{3}(t)\mathop\circ\limits_{k}
\big(\sigma_{x}(\omega,t,x,u)-\sigma_{x}(\omega,t,x,\bar{u}(t))\big)\big]\\
& &\qquad\qquad\qquad\qquad\qquad\qquad\qquad\qquad\qquad
\mathop\bullet\limits_{l}
\big(\sigma(\omega,t,x,u)-\sigma(\omega,t,x,\bar{u}(t))\big)
\\
& &
+\frac{1}{3}\sum\limits_{k=1}^{3}\sum\limits_{l=1,\ l\neq k}^{3}\big[
p_{3}(t)\mathop\circ\limits_{k}
\sigma_{x}(\omega,t,x,\bar{u}(t))\big]\mathop\bullet\limits_{l}
\big(\sigma(\omega,t,x,u)-\sigma(\omega,t,x,\bar{u}(t))\big)
\\
& &
+\frac{1}{12}\sum\limits_{k=1}^{3}\sum\limits_{l=k+1}^{4}
p_{4}(t)\mathop\bullet\limits_{k,l}
\big(\sigma(\omega,t,x,u)-\sigma(\omega,t,x,\bar{u}(t)),\\
& &\qquad \qquad \qquad \qquad \qquad\qquad \qquad\qquad \qquad
\sigma(\omega,t,x,u)-\sigma(\omega,t,x,\bar{u}(t))\big),\\
& &\qquad \qquad \qquad \qquad \qquad\qquad \qquad \qquad \qquad\quad
(\omega,t,x,u)\in \Omega\times[0,T]\times\mrn\times U,
\end{eqnarray*}
where, $(p_{1}(\cdot),q_{1}(\cdot))$ and $(p_{2}(\cdot),q_{2}(\cdot))$ are respectively the solutions to (\ref{firstajointequ}) and (\ref{secondajointequ}), $(p_{3}(\cdot),q_{3}(\cdot))$ and $(p_{4}(\cdot),q_{4}(\cdot))$ are respectively the solutions to (\ref{thirdajointequ}) and (\ref{forthajointequ}).

We have the following variational formulation for
the cost functional.

\begin{proposition}\label{variational formulation for noncov}
Let (C1) and (C2) hold.  Then,
\begin{eqnarray}\label{shorttaylor}
& &J(u^{\varepsilon}(\cdot))-J(\bar{u}(\cdot))\nonumber\\
\qquad&=&-\me\int_{0}^{T}\Big[
\mh(t,\bar{x}(t),u(t))+\inner{\mss(t,\bar{x}(t),u(t))}{\gamma(t)}\nonumber\\
& &\qquad\quad\quad
+\frac{1}{2}\inner{\mt(t,\bar{x}(t),u(t))y_{1}^{\varepsilon}(t)}{y_{1}^{\varepsilon}(t)}
\Big]\chi_{E_{\varepsilon}}(t)dt+o(\varepsilon^{2}),\ \  (\ephs\to 0^+).
\end{eqnarray}
\end{proposition}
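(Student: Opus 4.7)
The plan is to eliminate the state-derivative terms in the Taylor expansion of Lemma~\ref{taylorexplemma} by a duality argument with the four adjoint equations \eqref{firstajointequ}, \eqref{secondajointequ}, \eqref{thirdajointequ}, \eqref{forthajointequ}. Concretely, I would apply the It\^{o} formula from Lemma~\ref{o formu} to each of the four scalar processes
\begin{equation*}
\inner{p_{1}(t)}{\xi(t)},\quad p_{2}(t)\bigl(\eta(t),\eta(t)\bigr),\quad p_{3}(t)\bigl(\gamma(t),\gamma(t),\gamma(t)\bigr),\quad p_{4}(t)\bigl(y_{1}^{\varepsilon}(t),\ldots,y_{1}^{\varepsilon}(t)\bigr),
\end{equation*}
integrate from $0$ to $T$, and take expectation. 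The terminal conditions of the $p_{i}$'s convert the four boundary terms $\me[h_{x}\xi(T)]$, $\tfrac12\me[h_{xx}(\eta,\eta)(T)]$, $\tfrac16\me[h_{xxx}(\gamma,\gamma,\gamma)(T)]$, $\tfrac{1}{24}\me[h_{xxxx}(y_{1}^{\varepsilon},\ldots)(T)]$ appearing in \eqref{taylorexp} into time integrals involving the dynamics of $p_{i}$ and of the variational processes. Using that the $f_{x^{k}}$-terms in the adjoint drifts exactly match the polynomial-in-derivatives-of-$f$ terms produced by Taylor's formula, the $dt$-parts of these four integrals cancel the corresponding pieces of \eqref{taylorexp} up to contributions supported on $E_{\varepsilon}$.

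Once the $f_{x^{k}}$ cancellations are carried out, what remains on the right-hand side splits into three families of terms. First, those proportional to $\chi_{E_{\varepsilon}}(t)$ coming directly from $\delta b$, $\delta\sigma$ and their derivatives (via the variational equations \eqref{firstvariequ}--\eqref{forthvariequ}) combined with $p_{1},p_{2},p_{3},p_{4}$; these produce exactly the Hamiltonian-type expressions that define $\mh$, $\mss$ and $\mt$. Second, interior cross-terms coming from the It\^{o} cross-variations $\sigma_{x}\cdot\sigma_{x}$, $\sigma_{x}\cdot\sigma_{xx}$, $\sigma_{xx}\cdot\sigma_{xx}$, $\sigma_{x}\cdot\sigma_{xxx}$; these must be matched against the extra lower-order terms that were appended to the drifts of $p_{2},p_{3},p_{4}$ (the double/triple-index compositions $\mathop\circ\limits_{k,l}$ in \eqref{secondajointequ}, \eqref{thirdajointequ}, \eqref{forthajointequ}). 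Third, genuine higher-order remainders, which I would control by Lemma~\ref{estimateofvariequ} together with the H\"{o}lder and BDG inequalities to show they are $o(\varepsilon^{2})$ as $\varepsilon\to 0^{+}$. The ordering $\|y_{k}^{\varepsilon}\|_{\infty,\beta}^{\beta}\le C\varepsilon^{k\beta/2}$ and the residual bounds $\|r_{k}\|_{\infty,\beta}^{\beta}\le C\varepsilon^{(k+1)\beta/2}$ are used repeatedly: for example, $p_{3}$-terms of order $\gamma^{3}$ contribute at order $\varepsilon^{3/2}$, while $p_{4}$-terms of order $(y_{1}^{\varepsilon})^{4}$ and all products involving $y_{3}^{\varepsilon},y_{4}^{\varepsilon}$, or $r_{k}$ fall into $o(\varepsilon^{2})$.

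The grouping of the $E_{\varepsilon}$-supported terms is the conceptually clean but combinatorially most delicate step. After using symmetry of the multilinear forms $p_{2},p_{3},p_{4}$, the $O(\varepsilon)$ part is precisely $-\me\int \mh(t,\bar{x}(t),u(t))\chi_{E_{\varepsilon}}dt$; the $O(\varepsilon^{3/2})$ part factors as a linear pairing with $\gamma(t)$ and reproduces the definition of $\mss$, in which the $p_{2}\mathop\circ_{k}\sigma_{x}$ and $p_{3}\mathop\bullet_{k,l}(\delta\sigma,\delta\sigma)$ combinations appear; the $O(\varepsilon^{2})$ part becomes a symmetric bilinear pairing against $y_{1}^{\varepsilon}(t)\otimes y_{1}^{\varepsilon}(t)$ that assembles into $\mt$, with every $\mathop\circ_{k,l}$ and $\mathop\bullet_{k,l}$ combination arising exactly once from either an It\^{o} cross-variation or a cross product of a $\delta\sigma$-perturbation with a $\sigma_{x}$-coefficient.

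The main obstacle will be this bookkeeping: verifying that the symmetrized cross-variations produced by Lemma~\ref{o formu} applied to multilinear $p_{k}$-processes (which involve sums over all pairs of slot positions) match, term by term, the $\mathop\circ\limits_{k,l}(\sigma_{x},\sigma_{x})$, $\mathop\circ\limits_{k,l}(\sigma_{x},\sigma_{xx})$ and $\mathop\circ\limits_{k,l}(\sigma_{xx},\sigma_{xx})$ corrections already built into the drifts of $p_{2},p_{3},p_{4}$, so that all non-$E_{\varepsilon}$ contributions cancel and the $E_{\varepsilon}$-contributions collapse to $\mh+\inner{\mss}{\gamma}+\tfrac12\inner{\mt y_{1}^{\varepsilon}}{y_{1}^{\varepsilon}}$. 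Once this algebraic matching is done, the analytic estimates of Lemma~\ref{estimateofvariequ} finish the proof.
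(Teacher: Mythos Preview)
Your proposal is correct and follows essentially the same approach as the paper: apply the It\^{o} formula of Lemma~\ref{o formu} to the four duality products $\inner{p_{1}}{\xi}$, $p_{2}(\eta,\eta)$, $p_{3}(\gamma,\gamma,\gamma)$, $p_{4}(y_{1}^{\varepsilon},\ldots,y_{1}^{\varepsilon})$, use the terminal conditions of the adjoint equations to absorb the boundary terms in \eqref{taylorexp}, and then carry out the combinatorial matching (with Lemma~\ref{estimateofvariequ} handling the $o(\varepsilon^{2})$ remainders) so that the surviving $E_{\varepsilon}$-supported terms assemble into $\mh$, $\mss$ and $\mt$. The paper's Appendix~B executes exactly this computation.
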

\begin{proof}
See Appendix B.
\end{proof}

\section{Second-order necessary conditions}

In this section, we establish some second-order necessary conditions for
stochastic singular optimal controls in the sense of Pontryagin-type maximum principle. Firstly, we introduce the concept of the singular control (The corresponding concept for deterministic control systems can be found in \cite{Gabasov72} and the references cited therein).
\begin{definition}\label{singularcont Def}
An admissible control $\tilde{u}(\cdot)$ is called a singular control in the sense of
Pontryagin-type maximum principle on a control region $V$, if $V$ is a nonempty subset of $U$ and
\begin{eqnarray}\label{singularcont concept}
\qquad 0&=&\mh(t,\tilde{x}(t), v)\\
&=&\hh(t,\tilde{x}(t), v, \tilde{p}_{1}(t), \tilde{q}_{1}(t))
-\hh(t,\tilde{x}(t), \tilde{u}(t), \tilde{p}_{1}(t),\tilde{q}_{1}(t))\nonumber\\
& &+\frac{1}{2}\inner{\tilde{p}_{2}(t)
(\sigma(t,\tilde{x}(t),v)-\sigma(t,\tilde{x}(t),\tilde{u}(t)))}{
\sigma(t,\tilde{x}(t),v)-\sigma(t,\tilde{x}(t),\tilde{u}(t))},\nonumber\\
& &\qquad\qquad\qquad\qquad\qquad\qquad \qquad\qquad\
\ \ \forall \ v\in V,\  a.s.,\ a.e.\ t\in [0,T].\nonumber
\end{eqnarray}
where $\tilde{x}(\cdot)$ is the state with respect to $\tilde{u}(\cdot)$, and $(\tilde{p}_{1}(\cdot),\tilde{q}_{1}(\cdot))$,
$(\tilde{p}_{2}(\cdot),\tilde{q}_{2}(\cdot))$ are the adjoint processes given respectively by (\ref{firstajointequ}) and (\ref{secondajointequ}) with $(\bar x(\cdot),\bar u(\cdot)
)$ replaced by $(\tilde{x}(\cdot),\tilde{u}(\cdot))$. If the singular control $\tilde{u}(\cdot)$ is also optimal, we call it a singular optimal control.
\end{definition}

In the sequel, we shall fix the control subset $V\subset U$ appeared in Definition \ref{singularcont concept}.

\begin{remark}
In \cite{zhangH14a}, we introduced the concept of singular control in the classical sense.
Let us recall that, an admissible control $\tilde{u}(\cdot)$ is called a singular control in the classical sense if $\tilde{u}(\cdot)$ satisfies
\begin{equation}\label{singularcontrol Hu Huu euql zero}
\quad \ \ \ \ \  \left\{\!\!\!
\begin{array}{l}
\hh_{u}(t,\tilde{x}(t), \tilde{u}(t) ,\tilde{p}_{1}(t),\tilde{q}_{1}(t))=0,\ \  \ a.s., \ a.e.\ t\in [0,T], \\
\hh_{uu}(t,\tilde{x}(t), \tilde{u}(t) ,\tilde{p}_{1}(t),\tilde{q}_{1}(t))+\sigma_{u}(t,\tilde{x}(t),\tilde{u}(t))^{\top}
\tilde{p}_{2}(t)\sigma_{u}(t,\tilde{x}(t),\tilde{u}(t))=0,\\
 \qquad \qquad  a.s., \ a.e.\ t\in [0,T].
\end{array}\right.
\end{equation}
If $(\tilde{x}(\cdot),\tilde{u}(\cdot))$ is an optimal pair, the first-order necessary condition (\ref{pengs firstorder condition}) says that the map
$$v\mapsto\mh(\omega,t,\tilde{x}(t), v),\quad v\in U$$
admits its maximum at $\tilde{u}(t)$ for a.e. $(\omega,t)\in\Omega\times[0,T]$. A singular control in the classical sense is the one that satisfies trivially the first- and second-order necessary conditions ( for a.e. $(\omega,t)\in\Omega\times[0,T]$) for the maximization problem
$$\max_{v\in U}\ \mh(t,\tilde{x}(t), v).$$
Obviously, when the set $V$ is open and $\tilde{u}(t)\in V$, $a.e.$ $(\omega,t)\in \Omega\times [0,T]$, any singular control in the sense of Pontryagin-type maximum principle satisfies (\ref{singularcontrol Hu Huu euql zero}), that is, $\tilde{u}$ is also a singular control in the classical sense, but not vice versa.
\end{remark}

\begin{remark}
Since in this paper we consider the case of diffusion term containing the control variable, in (\ref{singularcont concept}) there exists the second order term $$\frac{1}{2}\inner{\tilde{p}_{2}(t)(\sigma(t,\tilde{x}(t),v)
-\sigma(t,\tilde{x}(t),\tilde{u}(t)))}{\sigma(t,\tilde{x}(t),v)
-\sigma(t,\tilde{x}(t),\tilde{u}(t))}.$$
When the diffusion term independent of the control variable this term is equal to $0$. In this case, Definition \ref{singularcont Def} reduces to Definition 2.1. in \cite{Tang10}.
\end{remark}

We need the following simple result.

\begin{lemma}\label{bouned of S}
Let (C1) and (C2) hold. Then $\mss(\cdot,\bar{x}(\cdot),u(\cdot))
\in L_{\mmf}^{4}(\Omega; L^{2}([0,T]; \mrn))$ and  $\mt(\cdot,\bar{x}(\cdot),u(\cdot))\in L_{\mmf}^{2}(\Omega; L^{2}([0,T]; L(\mathop\Pi\limits_{2}\mrn;\mr)))$ for any $u(\cdot)\in \mmu_{ad}$.
\end{lemma}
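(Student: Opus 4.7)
The plan is to bound $\mss$ and $\mt$ pointwise by constant multiples of $|p_i(t)| + |q_i(t)|$ for the relevant indices, and then quote the BSDE regularity of the adjoint processes to pass to the stated norms.

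First I would use (C2) to observe that the first four $x$-derivatives of $b,\sigma,f$ are uniformly bounded by $L$, and that, since $U$ is bounded, the modulus-of-continuity estimate in (C2)(ii) yields a deterministic constant $M = M(L, U)$ with
\begin{eqnarray*}
& & |\sigma(t,\bar x(t), u) - \sigma(t,\bar x(t), \bar u(t))|
  + |\sigma_{x}(t,\bar x(t), u) - \sigma_{x}(t,\bar x(t), \bar u(t))| \\
& & \quad + |\sigma_{xx}(t,\bar x(t), u) - \sigma_{xx}(t,\bar x(t), \bar u(t))|
  + |\sigma_{xxx}(t,\bar x(t), u) - \sigma_{xxx}(t,\bar x(t), \bar u(t))| \le M,
\end{eqnarray*}
and the analogous estimates with $\sigma$ replaced by $b$ or $f$. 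Consequently, every factor appearing in the explicit formulas for $\mss$ and $\mt$ other than the adjoint processes is $a.s.$ bounded by a deterministic constant. A term-by-term inspection of the definitions then produces a constant $C>0$ (depending only on $L$, $T$, $n$, $U$) with
\begin{eqnarray*}
|\mss(t,\bar x(t), u(t))| &\le& C\bigl(|p_{1}(t)| + |q_{1}(t)| + |p_{2}(t)| + |q_{2}(t)| + |p_{3}(t)|\bigr),\\
|\mt(t,\bar x(t), u(t))| &\le& C\bigl(|p_{1}(t)| + |q_{1}(t)| + |p_{2}(t)| + |q_{2}(t)| + |p_{3}(t)| + |q_{3}(t)| + |p_{4}(t)|\bigr).
\end{eqnarray*}
Note in particular that $q_{4}$ does not appear in either estimate, since it enters the adjoint equations only through the martingale part and never contracts against the bounded coefficient data.

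Next, standard BSDE regularity (the statement already recorded in the text for $(p_{3},q_{3})$ and $(p_{4},q_{4})$ holds, with the same proof, for $(p_{1},q_{1})$ and $(p_{2},q_{2})$) gives, for every $\beta \ge 1$, that each $p_{i} \in L^{\beta}_{\mmf}(\Omega; C([0,T]; \cdot))$ and each $q_{i} \in L^{\beta}_{\mmf}(\Omega; L^{2}(0,T; \cdot))$. Applying this with $\beta = 4$ to the bound on $\mss$, together with $\int_{0}^{T}|p_{i}(t)|^{2}\,dt \le T\sup_{[0,T]}|p_{i}|^{2}$ and Minkowski's inequality in $L^{4}(\Omega; L^{2}(0,T))$, yields $\mss(\cdot,\bar x(\cdot),u(\cdot)) \in L^{4}_{\mmf}(\Omega; L^{2}(0,T;\mrn))$. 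Applying it with $\beta = 2$ to the bound on $\mt$ gives $\mt(\cdot,\bar x(\cdot),u(\cdot)) \in L^{2}_{\mmf}(\Omega; L^{2}(0,T; L(\mathop\Pi\limits_{2}\mrn;\mr)))$.

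The whole argument is bookkeeping driven by the \emph{a priori} BSDE estimates; the only point that requires a moment of care is confirming that no polynomially growing $h$-term leaks into $\mss$ or $\mt$. It does not, because all $h$-dependence has been absorbed into the terminal data of the adjoint BSDEs, whose solutions are bounded in every $L^{\beta}$-in-time norm guaranteed by \cite{Peng97}. Hence I expect no substantive obstacle beyond organizing the many terms in $\mss$ and $\mt$ neatly enough that the two pointwise bounds above are transparent.
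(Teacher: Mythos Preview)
Your approach is essentially the paper's: bound every coefficient factor by a deterministic constant via (C1)--(C2), reduce to a pointwise estimate in terms of the adjoint processes, and then invoke the $L^{\beta}$ BSDE regularity from \cite{Peng97}. One small slip: your pointwise bounds should carry an additive constant, since $\mh_{x}$ contributes the term $-\delta f_{x}(t)$ (and, inside $\mt$, $\mss_{x}$ contributes $-\delta f_{xx}(t)$), which is uniformly bounded but multiplies no adjoint process; the correct form is $|\mss|\le C\bigl(1+|p_{1}|+|q_{1}|+|p_{2}|+|q_{2}|+|p_{3}|\bigr)$ and similarly for $\mt$, matching the paper's final estimate $C + C(\|p_{1}\|_{\infty,4}^{4}+\cdots)$.
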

\begin{proof}
It is sufficient to prove that
$$\me\Big[\int_{0}^{T}|\mss(t,\bar{x}(t),u(t))|^2dt\Big]^{2} <\infty$$
and
$$\me\int_{0}^{T}|\mt(t,\bar{x}(t),u(t))|^2dt<\infty.$$
By  (C1)--(C2), there exists a constant $C$ such that, for $\varphi=b,\ \delta,\  f$,
$$ |\varphi_{x} (t)|\le C,\ \  |\delta \varphi (t)|\le C, \   \mbox{ and }\ |\delta \varphi_{x} (t)|\le C,
\ \ a.e. \ (\omega,t)\in\Omega\times [0,T].$$
Therefore,
\begin{eqnarray*}
& &\me\Big[\int_{0}^{T}|\mss(t,\bar{x}(t),u(t))|^2dt\Big]^{2}\\
&=& \me\Big[\int_{0}^{T}\big|\delta b_{x}(t)\mathop\bullet\limits_{1}p_{1}(t)
+\delta \sigma_{x}(t)\mathop\bullet\limits_{1}q_{1}(t) -\delta f_x(t)\\
& &+\frac{1}{2}\sum\limits_{k=1}^{2} p_{2}(t)\mathop\bullet\limits_{k}\delta b(t)
+\frac{1}{2}\sum\limits_{k=1}^{2} q_{2}(t)\mathop\bullet\limits_{k}\delta \sigma(t)
+\frac{1}{2} [p_{2}(t)\mathop\circ\limits_{1}\sigma_{x}(t)]
\mathop\bullet\limits_{2}\delta \sigma(t)
\\
& &
+\frac{1}{2} [p_{2}(t)\mathop\circ\limits_{2}\sigma_{x}(t)]
\mathop\bullet\limits_{1}\delta \sigma(t)
+\frac{1}{2} [p_{2}(t)\mathop\circ\limits_{1}\delta\sigma_{x}(t)]
\mathop\bullet\limits_{2}\delta \sigma(t)\\
& &+\frac{1}{2} [p_{2}(t)\mathop\circ\limits_{2}\delta\sigma_{x}(t)]
\mathop\bullet\limits_{1}\delta \sigma(t)
+\frac{1}{6}\sum\limits_{k=1}^{2}\sum\limits_{l=k+1}^{3}
p_{3}(t)\mathop\bullet\limits_{k,l}
\big(\delta \sigma(t),\delta \sigma(t)\big)
\big|^2dt\Big]^{2}\\
&\le& C + C\me\Big[\int_{0}^{T}\big(|p_{1}(t)|^2+|q_{1}(t)|^2+|p_{2}(t)|^2
+|q_{2}(t)|^2+|p_{3}(t)|^2\big)dt\Big]^{2}\\
&\le& C + C\big(\|p_{1}\|_{\infty,4}^{4}+\|q_{1}\|_{2,4}^{4}
+\|p_{2}\|_{\infty,4}^{4}+\|q_{2}\|_{2,4}^{4}+\|p_{3}\|_{\infty,4}^{4}\big)\\
&<& \infty.
\end{eqnarray*}
In a similar way, we can prove that
$$\me\int_{0}^{T}|\mt(t,\bar{x}(t),u(t))|^2dt<\infty.$$
\end{proof}

By Lemma \ref{bouned of S}, it follows that $\mss(\cdot,\bar{x}(\cdot),v)
\in L_{\mmf}^{2}(\Omega; L^{2}([0,T]; \mrn))$ for any $v\in V\subset U$. By \cite[Lemma 3.8]{zhangH14a}, there exists a $\phi_{v}\in L^{2}(0,T;  L_{\mmf}^{2}(\Omega\times[0,T]; \mrn))$ such that
\begin{equation}\label{martingale exp of mss(t)}
\mss(t,\bar{x}(t),v)=\me~\mss(t,\bar{x}(t),v)+\int_{0}^{t}\phi_{v}(s,t)dW(s),\ \ a.s.,\ a.e.\ t\in[0,T].
\end{equation}
Denote by $\Phi(\cdot)$ the solution to the following stochastic differential equation
\begin{equation}\label{Phi}
\left\{
\begin{array}{l}
d\Phi(t)= b_{x}(t)\Phi(t)dt+\sigma_{x}(t)\Phi(t)dW(t),
\qquad \ \ \ t\in[0,T], \qquad\qquad\\
\Phi(0)=I,
\end{array}\right.
\end{equation}
where $I$ is the identity matrix in $\mr^{n\times n}$.

Using the martingale representation formula (\ref{martingale exp of mss(t)}), we obtain the following second-order necessary condition:
\begin{theorem}\label{2orderconditionth}
Let (C1) and (C2) hold. If $\bar{u}(\cdot)$ is a singular optimal control in the sense of Pontryagin-type maximum principle on the control subset $V\subset U$, then, for any $v\in V$, it holds that
\begin{eqnarray}\label{2ordercondition}
\quad& &\me ~\Big\langle\mss(\t,\bar{x}(\t),v),
b(\t,\bar{x}(\t),v)-b(\t,\bar{x}(\t),\bar{u}(\t))\Big\rangle\nonumber\\
& &+\partial^{+}_{\t}\Big(\mss(\t,\bar{x}(\t),v);
\sigma(\t,\bar{x}(\t),v)-\sigma(\t,\bar{x}(\t),\bar{u}(\t))\Big)\nonumber\\
& &+\frac{1}{2}\me ~\Big\langle\mt(\t,\bar{x}(\t),v)
\big(\sigma(\t,\bar{x}(\t),v)-\sigma(\t,\bar{x}(\t),\bar{u}(\t))\big),\nonumber\\
& &\qquad\qquad\qquad\qquad\quad
\sigma(\t,\bar{x}(\t),v)-\sigma(\t,\bar{x}(\t),\bar{u}(\t))
\Big\rangle\le 0, \ \  a.e.\  \t\in [0,T].
\end{eqnarray}
where
\begin{eqnarray}\label{2ozxndition}
& &\frac{1}{2}\partial^{+}_{\t}\Big(\mss(\t,\bar{x}(\t),v);
\sigma(\t,\bar{x}(\t),v)-\sigma(\t,\bar{x}(\t),\bar{u}(\t))\Big)\\
&:=&\limsup_{\theta\to 0^+}\frac{1}{\theta^2}\me\int_{\t}^{\t+\theta}\int_{\t}^{t}
\Big\langle\phi_{v}(s,t),\nonumber\\
& &\qquad\qquad\qquad\qquad\quad \Phi(\t)\Phi(s)^{-1}
\big(\sigma(s,\bar{x}(s),v)-\sigma(s,\bar{x}(s),\bar{u}(s))\big)\Big\rangle dsdt,\nonumber
\end{eqnarray}
$\phi_{v}(\cdot,\cdot)$ is determined by (\ref{martingale exp of mss(t)}).
\end{theorem}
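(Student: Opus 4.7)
The plan is to exploit the second-order variational formula (Proposition~\ref{variational formulation for noncov}) with a needle-variation localized near a Lebesgue point $\t$. Fix $v\in V$ and take $\t\in(0,T)$ to be a Lebesgue point for all relevant processes (e.g.\ $\me\inner{\mss(\cdot,\bar x(\cdot),v)}{b(\cdot,\bar x(\cdot),v)-b(\cdot,\bar x(\cdot),\bar u(\cdot))}$, $\me\inner{\mt(\cdot,\bar x(\cdot),v)\delta\sigma(\cdot)}{\delta\sigma(\cdot)}$, the processes $\bar x(\cdot)$, $b_x(\cdot)$, $\sigma_x(\cdot)$, and the integrand defining $\partial^+_\t$). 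Set $E_\varepsilon=[\t,\t+\varepsilon]$ and $u(t)\equiv v$ on $E_\varepsilon$. The singularity hypothesis gives $\mh(t,\bar x(t),v)=0$ on $E_\varepsilon$, and combining optimality $J(u^\varepsilon)\ge J(\bar u)$ with Proposition~\ref{variational formulation for noncov} yields
\begin{equation*}
\me\int_{E_\varepsilon}\Big[\inner{\mss(t,\bar x(t),v)}{\gamma(t)}+\frac{1}{2}\inner{\mt(t,\bar x(t),v)y_{1}^{\varepsilon}(t)}{y_{1}^{\varepsilon}(t)}\Big]dt\le o(\varepsilon^2).
\end{equation*}
The goal is to divide by $\varepsilon^2/2$, take $\limsup_{\varepsilon\to 0^+}$, and identify the three surviving limits with the three terms in (\ref{2ordercondition}).

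The contribution from $\me\int_{E_\varepsilon}\inner{\mss}{y_{2}^{\varepsilon}}dt$ is computed from (\ref{secondvariequ}): the only $O(1)$ forcing on $E_\varepsilon$ is the drift $\delta b(t)\chi_{E_\varepsilon}(t)$, while the remaining forcings involve $y_{1}^{\varepsilon}$ (size $\sqrt{\varepsilon}$) or its square, so $y_{2}^{\varepsilon}(t)=(t-\t)\delta b(\t)+o(t-\t)$ on $E_\varepsilon$. Together with the Lebesgue-point property at~$\t$, this produces the first term of (\ref{2ordercondition}) after integration and rescaling. The contribution from $\frac{1}{2}\me\int_{E_\varepsilon}\inner{\mt y_{1}^{\varepsilon}}{y_{1}^{\varepsilon}}dt$ is handled analogously: freeze $\mt$ at $\t$ (it is then $\mf_\t$-measurable), approximate $y_{1}^{\varepsilon}(t)\approx\int_\t^t\delta\sigma(s)dW(s)$ on $E_\varepsilon$, and apply It\^o isometry to obtain $\frac{\varepsilon^2}{4}\me\inner{\mt(\t,\bar x(\t),v)\delta\sigma(\t)}{\delta\sigma(\t)}+o(\varepsilon^2)$, which after the $\varepsilon^2/2$-rescaling gives the third term.

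The delicate term is $\me\int_{E_\varepsilon}\inner{\mss}{y_{1}^{\varepsilon}}dt$, from which $\partial^+_\t$ emerges. The plan is: (a)~expand $\mss(t,\bar x(t),v)=\me\mss(t,\bar x(t),v)+\int_0^t\phi_v(s,t)dW(s)$ via (\ref{martingale exp of mss(t)}); (b)~solve (\ref{firstvariequ}) by variation of constants using $\Phi$ from (\ref{Phi}), giving the dominant representation $y_{1}^{\varepsilon}(t)=\Phi(t)\int_\t^t\Phi(s)^{-1}\delta\sigma(s)dW(s)$ on $E_\varepsilon$ modulo a drift correction of order $\varepsilon$; (c)~freeze $\Phi(t)$ at $\Phi(\t)$, which is $\mf_\t$-measurable and can therefore be absorbed inside the inner It\^o integral $\int_\t^t$; (d)~split $\int_0^t\phi_v dW=\int_0^\t+\int_\t^t$, note that the first summand is $\mf_\t$-measurable and yields zero against the mean-zero $\int_\t^t\Phi(\t)\Phi(s)^{-1}\delta\sigma dW$, and apply It\^o isometry to the remaining cross term to obtain $\me\int_\t^t\inner{\phi_v(s,t)}{\Phi(\t)\Phi(s)^{-1}\delta\sigma(s)}ds$. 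After integrating in $t\in E_\varepsilon$ and taking $\limsup\frac{1}{\varepsilon^2}(\cdot)$, this is exactly $\frac{1}{2}\partial^+_\t$ in the sense of (\ref{2ozxndition}).

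The main obstacles are steps (c)--(d): the non-adaptedness of $\Phi(t)$ and of $\mss(t)$ rules out any direct application of It\^o isometry, so both must be frozen at $\t$, with the freezing errors kept uniformly $o(\varepsilon^2)$ through the Lebesgue-point hypothesis. A subsidiary subtlety is to show that the deterministic-part contribution $\me\inner{\me\mss(t)}{\me y_{1}^{\varepsilon}(t)}$, which is only a priori $O(\varepsilon^2)$, is actually $o(\varepsilon^2)$; this requires exhibiting the cancellation between the drift correction in the variation-of-constants representation of $y_{1}^{\varepsilon}$ and the quadratic-covariation term arising when one expands $\me[\Phi(t)\!\int_\t^t\Phi(s)^{-1}\delta\sigma(s)dW(s)]$, so that $\me y_{1}^{\varepsilon}(t)=o(t-\t)$ on $E_\varepsilon$. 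Once these remainders are justified, combining the three asymptotics with the optimality inequality and rescaling by $\varepsilon^2/2$ gives (\ref{2ordercondition}) at every Lebesgue point $\t$, hence for a.e.~$\t\in[0,T]$.
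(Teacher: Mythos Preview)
Your overall strategy matches the paper's proof: needle variation on $E_\varepsilon=[\tau,\tau+\varepsilon)$, use singularity to kill $\mh$, then identify the three limits coming from $\inner{\mss}{y_2^\varepsilon}$, $\inner{\mss}{y_1^\varepsilon}$, and $\inner{\mt\,y_1^\varepsilon}{y_1^\varepsilon}$. Your treatments of the $y_2^\varepsilon$ and $\mt$ terms are correct and essentially the paper's Steps~2--3.

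The gap is in your step~(c). The freezing error $\Phi(t)\to\Phi(\tau)$ is \emph{not} $o(\varepsilon^2)$ by any Lebesgue-point argument: writing $\Phi(t)-\Phi(\tau)=\int_\tau^t b_x\Phi\,ds+\int_\tau^t\sigma_x\Phi\,dW$, the second summand is only $O(\sqrt{\varepsilon})$, and its product with $\int_\tau^t\Phi^{-1}\delta\sigma\,dW$ has expectation $\me\int_\tau^t\sigma_x\delta\sigma\,ds\sim(t-\tau)\,\me[\sigma_x(\tau)\delta\sigma(\tau)]$. Paired with $\mss(t)$ and integrated over $E_\varepsilon$, this gives the nonzero limit $+\tfrac12\me\inner{\mss(\tau)}{\sigma_x(\tau)\delta\sigma(\tau)}$ after dividing by $\varepsilon^2$ (this is the paper's computation~(\ref{th41 eq4})). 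Likewise the drift correction you discard in step~(b), namely $-\Phi(t)\int_\tau^t\Phi^{-1}\sigma_x\delta\sigma\,ds$, paired with $\mss(t)$ contributes $-\tfrac12\me\inner{\mss(\tau)}{\sigma_x(\tau)\delta\sigma(\tau)}$ (the paper's~(\ref{limit s part1})). These two $O(\varepsilon^2)$ terms cancel, and that cancellation is what makes your step~(c) ultimately harmless---but it must be exhibited at the level of the \emph{full} $\mss$, not just its mean.

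Your ``subsidiary subtlety'' about $\me\,y_1^\varepsilon(t)=o(t-\tau)$ captures only the deterministic slice of this cancellation. After you split $\mss=\me\mss+M$ with $M(t)=\int_0^t\phi_v\,dW$, you still owe the estimate $\me\bigl[M(t)\bigl(y_1^\varepsilon(t)-\Phi(\tau)\!\int_\tau^t\Phi^{-1}\delta\sigma\,dW\bigr)\bigr]=o(\varepsilon^2)$ after integration, and the two pieces of this (drift correction versus $\Phi$-freezing) individually carry the nonzero limits $\mp\tfrac12\operatorname{Cov}\bigl(\mss(\tau),\sigma_x(\tau)\delta\sigma(\tau)\bigr)$. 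The clean fix is to abandon the $\me\mss+M$ split and follow the paper: pair the full $\mss(t)$ with each of the three pieces of $y_1^\varepsilon$, freeze $\mss(t)\to\mss(\tau)$ (this freezing \emph{is} $o(\varepsilon^2)$, since $\mss(\tau)$ is $\mf_\tau$-measurable and the Lebesgue-point argument applies to $t\mapsto\Phi(t)^\top\mss(t)$ in $L^2$), and observe the $\pm\tfrac12\me\inner{\mss(\tau)}{\sigma_x(\tau)\delta\sigma(\tau)}$ cancellation directly.
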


The proof of Theorem \ref{2orderconditionth} will be given in Subsection 5.1.

Note that the second-order necessary condition (\ref{2ordercondition}) is only a pointwise type condition with respect to the time variable $t$ ($\in[0,T]$). To obtain the pointwise second-order necessary conditions with respect to both the time $t$ and the sample point $\omega$ ($\in\Omega$), similar to the first part of our work (see \cite{zhangH14a}), we need the following regularity condition.
\vspace{4mm}
\begin{enumerate}
\item [(C3)] For any $v\in V$, $\mss(\cdot,\bar{x}(\cdot),v)\in \ml_{2,\mmf}^{1,2}(\mrn)$, and the map $v\mapsto \nabla\mss(t,\bar{x}(t),v)$ is continuous on $V$ a.s., a.e. $t\in[0,T]$.
\end{enumerate}
\vspace{4mm}

We have the following result.

\begin{theorem}\label{Th 2ordercondition nonconvex malliavin}
Let (C1)--(C3) hold. If $\bar{u}(\cdot)$ is a singular optimal control in the sense of Pontryagin-type maximum principle on the control subset $V\subset U$, then, for a.e. $\t\in [0,T]$, it holds that
\begin{eqnarray}\label{2ordercondition nonconvex corollary}
& &\inner{\mss(\t,\bar{x}(\t), v)}{
b(\t,\bar{x}(\t),v)-b(\t,\bar{x}(\t),\bar{u}(\t))}\nonumber\\
& &+ \inner{\nabla\mss(\t,\bar{x}(\t),v)}{
\sigma(\t,\bar{x}(\t),v)-\sigma(\t,\bar{x}(\t),\bar{u}(\t))}\nonumber\\
& &
+\frac{1}{2}\big\langle\mt(\t,\bar{x}(\t),v)
\big(\sigma(\t,\bar{x}(\t),v)-\sigma(\t,\bar{x}(\t),\bar{u}(\t))\big),\nonumber\\
& &\qquad\qquad\qquad\quad\
\sigma(\t,\bar{x}(\t),v)-\sigma(\t,\bar{x}(\t),\bar{u}(\t))
\big\rangle\le 0,
\ \forall \ \ v\in V, \ a.s.
\end{eqnarray}
\end{theorem}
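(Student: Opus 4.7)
The plan is to derive (\ref{2ordercondition nonconvex corollary}) from Theorem \ref{2orderconditionth} in two steps: first, convert the limsup expression $\partial^+_\tau$ in (\ref{2ozxndition}) into an expectation involving the Malliavin derivative $\nabla\mss$; second, remove the outer expectation by a random-spike argument and then use the continuity in $v$ supplied by (C3) to get a common exceptional null set.

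For the first step, the key observation is that by the Clark--Ocone formula applied to the $\mf_t$-measurable random variable $\mss(t,\bar x(t),v)$, the kernel $\phi_v(s,t)$ in (\ref{martingale exp of mss(t)}) coincides with $\me[\dd_s\mss(t,\bar x(t),v)\mid\mf_s]$ for $s<t$. Under (C3), $\mss(\cdot,\bar x(\cdot),v)\in\ml_{2,\mmf}^{1,2}(\mrn)$, so that $\dd_s\mss(t,\bar x(t),v)$ is $L^2$-close to $\nabla\mss(s,\bar x(s),v)$ when $0<t-s$ is small, the error being controlled by the integrable function $f_\varepsilon(\cdot)$ from the definition of $\ml_{2^+}^{1,2}$. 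Writing $\delta\sigma(s):=\sigma(s,\bar x(s),v)-\sigma(s,\bar x(s),\bar u(s))$, and using the $L^2$-continuity of $\Phi(\cdot)$, $\bar x(\cdot)$, and (from (C3)) of $v\mapsto\nabla\mss(\cdot,\bar x(\cdot),v)$, the double integral in (\ref{2ozxndition}) over the triangular region $\{\tau\le s\le t\le\tau+\theta\}$ (of area $\theta^2/2$) becomes, to leading order, $\frac{\theta^2}{2}\,\me\langle\nabla\mss(\tau,\bar x(\tau),v),\delta\sigma(\tau)\rangle$ once $\phi_v(s,t)$ is replaced by $\nabla\mss(\tau,\bar x(\tau),v)$, $\Phi(\tau)\Phi(s)^{-1}$ by the identity, and $\delta\sigma(s)$ by $\delta\sigma(\tau)$. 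Dividing by $\theta^2$ and accounting for the prefactor $\frac12$ on the left of (\ref{2ozxndition}) yields, for a.e.\ $\tau\in[0,T]$, the identity $\partial^+_\tau(\mss(\tau,\bar x(\tau),v);\delta\sigma(\tau))=\me\langle\nabla\mss(\tau,\bar x(\tau),v),\delta\sigma(\tau)\rangle$. Substituting this into (\ref{2ordercondition}) produces an expectation-only second-order inequality for every fixed $v\in V$ and a.e.\ $\tau\in[0,T]$.

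For the second step, I would randomize the spike value. Fix $v\in V$ and an event $A\in\mf_\tau$, and rerun the needle-variation analysis of Section 3 with spike value $v\chi_A(\omega)+\bar u(\tau,\omega)\chi_{\Omega\setminus A}(\omega)$. Since the variational equations (\ref{firstvariequ})--(\ref{forthvariequ}), Lemma \ref{estimateofvariequ}, and Proposition \ref{variational formulation for noncov} all accommodate $\omega$-dependent perturbation values (their coefficients are linear in the ``$\delta$''-increments and the estimates only use boundedness of $U$), the entire chain of estimates carries through with an extra factor $\chi_A$ inside every expectation. Since $A\in\mf_\tau$ is arbitrary and the integrand produced by the first step is $\mf_\tau$-measurable (it depends on $\omega$ only through $\bar x(\tau)$ and the adapted processes $p_i(\tau),q_i(\tau)$), this forces the pointwise inequality (\ref{2ordercondition nonconvex corollary}) a.s.\ for each fixed $v\in V$. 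Finally, the continuity of $v\mapsto(\mss,\nabla\mss,\mt)(\tau,\bar x(\tau),v)$ provided by (C2)--(C3), together with separability of $V\subset\mrm$, allows the exceptional null set to be chosen independently of $v$, completing the proof.

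The hardest step will be the Lebesgue-differentiation argument identifying $\partial^+_\tau$ in the first step: (C3) supplies only $\int_0^T f_\varepsilon(s)\,ds\to 0$, with no pointwise-in-$s$ rate, so one must pick $\tau$ from the common Lebesgue set of countably many auxiliary measurable functions (a sequence $f_{\varepsilon_n}$ together with the integrands on the left of (\ref{2ordercondition nonconvex corollary})) and bound the residual double integral by Cauchy--Schwarz against the $L^2$-bounded factor $\Phi(\tau)\Phi(\cdot)^{-1}\delta\sigma(\cdot)$. The monotonicity of $f_\varepsilon$ in $\varepsilon$ combined with $\int_0^T f_\varepsilon\to 0$ forces $f_\varepsilon(\tau)\to 0$ for a.e.\ $\tau$, which together with the Lebesgue differentiation theorem applied to each $f_{\varepsilon_n}$ gives the required rate $\int_\tau^{\tau+\theta}f_\theta(s)\,ds=o(\theta)$. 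The random-spike step is routine in principle but still requires verifying uniformity of all the estimates of Section 3 when the perturbation value is an $\mf_\tau$-measurable bounded random variable rather than a deterministic element of $V$.
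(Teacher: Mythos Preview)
Your overall strategy---random spike on an event $A$, Clark--Ocone to identify $\phi_v(s,t)$ with $\me[\dd_s\mss(t,\bar x(t),v)\mid\mf_s]$, then (C3) to replace this by $\nabla\mss$---is exactly the paper's approach; the paper does not derive Theorem \ref{Th 2ordercondition nonconvex malliavin} from Theorem \ref{2orderconditionth} either, but reruns the limit computation of Subsection 5.1 directly with the randomized spike. Your $f_\varepsilon$ argument (monotonicity $\Rightarrow$ $f_\varepsilon(\tau)\downarrow 0$ a.e., then Lebesgue points of countably many $f_{\varepsilon_n}$ to get $\frac{1}{\theta}\int_\tau^{\tau+\theta}f_\theta\,ds\to 0$) is correct and is a clean substitute for the paper's appeal to \cite[Lemma 2.1]{zhangH14a}, which only produces convergence along a subsequence $\varepsilon_\ell$.

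Two points need tightening. First, your Step 1 does not actually shortcut Step 2: once you randomize by $\chi_A$, the identification of the $\partial_\tau^+$-type term has to be redone with the $\chi_A$ factor inside all the integrals, so Step 1 is really a template rather than a logical input. Second---and this is the real gap---you write ``Fix $v\in V$ and an event $A\in\mf_\tau$'' and then take the limit. But the a.e.-$\tau$ exceptional set produced by your Lebesgue-differentiation argument depends on $A$, and $\mf_\tau$ is uncountable. The paper handles this by choosing countable rational $t_i\in[0,T)$, a countable family $\{A_{ij}\}_j$ dense in each $\mf_{t_i}$ (in the sense $P(A\,\Delta\,A_{ij_n})\to 0$), and a countable dense $\{v^k\}\subset V$; the limits are computed for each triple $(i,j,k)$, the countable union of null sets is still null, and then density of $\{A_{ij}\}$, right-continuity of the filtration $\mmf$, and the continuity of $v\mapsto\nabla\mss(\tau,\bar x(\tau),v)$ from (C3) upgrade the conclusion to all $A\in\mf_\tau$ and all $v\in V$. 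Your last paragraph gestures at countability for $v$ but does not address the countability for $A$; you should make this explicit.
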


The proof of Theorem \ref{Th 2ordercondition nonconvex malliavin} will be given in Subsection 5.2.

As an easy consequence of Theorem \ref{Th 2ordercondition nonconvex malliavin}, the following pointwise second-order condition immediately  holds.
\begin{corollary}\label{coroll for S equ zero}
Let (C1)--(C2) hold. If $\bar{u}(\cdot)$ is a singular optimal control in the sense of Pontryagin-type maximum principle on the control subset $V\subset U$ and
\begin{equation}\label{singularcont with seuqalszero}
\mss(t,\bar{x}(t), v)=0, \qquad \forall \ v\in V,\ a.s.,\ a.e.\ t\in[0,T],
\end{equation}
then, for a.e. $\t\in [0,T]$, it holds that
\begin{eqnarray}\label{2orderconditionth sequalszero}
& &\big\langle\mt(\t,\bar{x}(\t),v)
\big(\sigma(\t,\bar{x}(\t),v)-\sigma(\t,\bar{x}(\t),\bar{u}(\t))\big),\\
& &\qquad\qquad\qquad\quad\
\sigma(\t,\bar{x}(\t),v)-\sigma(\t,\bar{x}(\t),\bar{u}(\t))
\big\rangle\le 0,
\ \  \forall \ \ v\in V, \ a.s.\nonumber
\end{eqnarray}
\end{corollary}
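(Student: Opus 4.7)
The plan is to reduce the corollary to Theorem \ref{Th 2ordercondition nonconvex malliavin} by showing that the extra hypothesis (\ref{singularcont with seuqalszero}) makes condition (C3) automatic. Under that hypothesis, for each fixed $v\in V$ the process $t\mapsto \mss(t,\bar{x}(t),v)$ is (a representative of) the zero element of $L^{2}(\Omega\times[0,T];\mrn)$. The zero process is trivially Malliavin differentiable with Malliavin derivative identically zero, so I would verify directly that the sup functions $f_{\varepsilon}(s)$ and $g_{\varepsilon}(s)$ in the definitions of $\ml_{2^+}^{1,2}(\mrn)$ and $\ml_{2^-}^{1,2}(\mrn)$ both vanish identically with $\nabla^{\pm}\mss(\cdot,\bar{x}(\cdot),v)=0$. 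This places $\mss(\cdot,\bar{x}(\cdot),v)$ in $\ml^{1,2}_{2,\mmf}(\mrn)$ with $\nabla\mss(\cdot,\bar{x}(\cdot),v)\equiv 0$, and the continuity of the map $v\mapsto\nabla\mss(t,\bar{x}(t),v)\equiv 0$ on $V$ is automatic. Thus (C3) holds.

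Once (C3) is in force, Theorem \ref{Th 2ordercondition nonconvex malliavin} applies and delivers the inequality (\ref{2ordercondition nonconvex corollary}) for a.e.\ $\t\in[0,T]$, every $v\in V$, almost surely. I would then substitute the two identities $\mss(\t,\bar{x}(\t),v)=0$ and $\nabla\mss(\t,\bar{x}(\t),v)=0$ into that inequality: the first summand $\langle\mss(\t,\bar{x}(\t),v),\, b(\t,\bar{x}(\t),v)-b(\t,\bar{x}(\t),\bar{u}(\t))\rangle$ vanishes because its $\mss$ factor is zero, and the second summand $\langle\nabla\mss(\t,\bar{x}(\t),v),\, \sigma(\t,\bar{x}(\t),v)-\sigma(\t,\bar{x}(\t),\bar{u}(\t))\rangle$ vanishes because $\nabla\mss=0$. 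Only the $\mt$-term survives, which is exactly the inequality (\ref{2orderconditionth sequalszero}).

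No step in this plan is a serious obstacle; the whole argument is a formal reduction. The only point requiring a little care is the verification that the zero adapted process is a bona fide element of $\ml^{1,2}_{2,\mmf}(\mrn)$ in the precise sense of the $\nabla^{\pm}$ construction recalled in Section 2, but this is immediate from the definitions since every sup functional involved is identically zero. As a sanity check, one could alternatively bypass Theorem \ref{Th 2ordercondition nonconvex malliavin} and work directly from Theorem \ref{2orderconditionth}: under (\ref{singularcont with seuqalszero}), the martingale representation (\ref{martingale exp of mss(t)}) forces $\phi_{v}(s,t)\equiv 0$, so the $\partial^{+}_{\t}$ term in (\ref{2ordercondition}) is zero, leaving an expectation inequality involving only the $\mt$-term; the pointwise a.s.\ version then follows by the same localization/continuity argument that powers Theorem \ref{Th 2ordercondition nonconvex malliavin}, but this is harder than simply invoking that theorem after verifying (C3) trivially.
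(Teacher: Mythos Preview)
Your proposal is correct and matches the paper's approach: the paper simply states that the corollary is an easy consequence of Theorem \ref{Th 2ordercondition nonconvex malliavin}, and your verification that hypothesis (\ref{singularcont with seuqalszero}) makes (C3) trivially satisfied (with $\nabla\mss\equiv 0$) is exactly the implicit reasoning needed to bridge the gap between the assumptions (C1)--(C2) of the corollary and the assumptions (C1)--(C3) of that theorem.
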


\begin{remark}
When the diffusion term is independent of the control variable, $\sigma(t,\bar{x}(t),v)-\sigma(\t,\bar{x}(t),\bar{u}(t))= 0$ for any $(\omega,t)\in\Omega\times[0,T]$. Therefore,
$$\inner{\nabla\mss(\t,\bar{x}(\t),v)}{
\sigma(\t,\bar{x}(\t),v)-\sigma(\t,\bar{x}(\t),\bar{u}(\t))}\equiv 0,$$
$$\inner{\mt(\t,\bar{x}(\t),v)\big(\sigma(\t,\bar{x}(\t),v)
-\sigma(\t,\bar{x}(\t),\bar{u}(\t))\big)}{
\sigma(\t,\bar{x}(\t),v)-\sigma(\t,\bar{x}(\t),\bar{u}(\t))}\equiv 0,$$
and the condition (\ref{2ordercondition nonconvex corollary}) is reduced to
$$
\inner{\mss(\t,\bar{x}(\t), v)}{
b(\t,\bar{x}(\t),v)-b(\t,\bar{x}(\t),\bar{u}(\t))}\le 0, \quad \forall \ v\in V,\ a.s.,\ a.e.\  \t\in [0,T],
$$
where, in this case,
\begin{eqnarray*}
\mss(\omega,t,x,u)&=&\hh_{x}(\omega,t,x,u, p_{1}(t),q_{1}(t))-\hh_{x}(\omega,t,x,\bar{u}(t), p_{1}(t),q_{1}(t)),\\
& &
+\frac{1}{2}p_{2}(t)(b(\omega,t,x,u)-b(\omega,t,x,\bar{u}(t)))\\
& &
+\frac{1}{2}(b(\omega,t,x,u)-b(\omega,t,x,\bar{u}(t)))^{\top}p_{2}(t),\\
& &\qquad\qquad\qquad\qquad\qquad (\omega,t,x,u)\in \Omega\times[0,T]\times \mrn\times U.
\end{eqnarray*}
The corresponding result coincides with \cite[Theorem 2.1]{Tang10}.
In addition, since the diffusion term is independent of the control variable, $y_{1}^{\varepsilon}(t)\equiv 0$, and hence
$$
\lim_{\varepsilon\to 0^+}\frac{1}{\varepsilon^2}
\me\int_{0}^{T}~\mss(t,\bar{x}(t),v)y_{1}^{\varepsilon}(t)
\chi_{E_{\varepsilon}}(t)dt=0.
$$
In this case, it is unnecessary to introduce the regularity assumption (C3) to prove the desired condition (\ref{2ordercondition nonconvex corollary}).
\end{remark}

\begin{remark}
In Theorem  \ref{Th 2ordercondition nonconvex malliavin}, we obtain a pointwise second-order necessary condition for stochastic optimal controls under  relatively weak assumptions on the control set $U$ through the perturbation technique of needle variation. However, this approach needs considerably high smoothness assumptions on the coefficients $b$, $\sigma$, $f$, and $h$ with respect to the state variable $x$ (differentiable with respect $x$ up to the forth order). Furthermore, four adjoint equations are introduced to represent this condition.  When the set $U$ has good structure such that the first- and second-order adjacent sets of $U$ on the boundary point of $U$ is nonempty (but $U$ is still allowed to be nonconvex), some perturbation technique from the classical variational analysis can be used to establish the second-order necessary conditions for stochastic optimal controls under lower regularity assumption on the coefficients $b$, $\sigma$, $f$, and $h$ (with respect to the state variable $x$) and only two adjoint equations are introduced to derive the  second-order necessary conditions. We refer the reader to \cite{FZZ15} for a detailed discussion in this respect.
\end{remark}

Two illustrative examples are as follows.

\begin{example}
{\em
Let
$$
\left\{
\begin{array}{l}
dx(t)=b(x(t))u(t)dt+u(t)dW(t),\qquad\qquad\qquad\qquad t\in[0,1],\\
x(0)=0,
\end{array}\right.
$$
$U=\big\{-1,\ 0,\ 1\big\}$,
and let
$$J(u(\cdot))=\frac{1}{2}\me\int_{0}^{1} |u(t)|^2dt-\frac{1}{2}\me~|x(1)|^2.$$

Assume that $b(\cdot):\mr\to\mr$ is bounded and continuously differentiable up to order 5 with bounded derivatives, $b_{x}(0)>0 $. Then, the conditions (C1)--(C2) hold.

For the above optimal control problem, the Hamiltonian is defined by
$$\hh(t,x,u,p_1,q_1)=p_1b(x)u+q_1u-\frac{1}{2}u^2,$$
$(t,x,u,p_1,q_1)\in [0,1]\times\mr\times U\times\mr\times\mr.$

Let $(\bar{x}(t),\bar{u}(t))\!\equiv\!(0,0)$.
The four adjoint equations with respect to $(\bar{x}(\cdot),\bar{u}(\cdot))$ are given below:
$$
\left\{
\begin{array}{l}
dp_{1}(t)=q_{1}(t)dW(t), \ t\in[0,1],\qquad\\
p_{1}(1)=0;
\end{array}\right.\qquad
\left\{
\begin{array}{l}
dp_{2}(t)=q_{2}(t)dW(t), \ t\in[0,1],\\
p_{2}(1)=1;
\end{array}\right.
$$
$$
\left\{
\begin{array}{l}
dp_{3}(t)=q_{3}(t)dW(t), \ t\in[0,1],\qquad\\
p_{3}(1)=0;
\end{array}\right.\qquad
\left\{
\begin{array}{l}
dp_{4}(t)=q_{4}(t)dW(t), \ t\in[0,1],\\
p_{4}(1)=0.
\end{array}\right.
$$

It is easy to check that
\begin{eqnarray*}
& &(p_{1}(t),q_{1}(t))=(0,0),\ \ (p_{2}(t),q_{2}(t))=(1,0),\ \ \\
& &(p_{3}(t),q_{3}(t))=(0,0),\ \ (p_{4}(t),q_{4}(t))=(0,0), \ \ \ \forall\ (\omega,t)\in\Omega\times[0,1];
\end{eqnarray*}
and,
\begin{eqnarray*}
& &\mh(t,\bar{x}(t),v)=0, \quad
\mss(t,\bar{x}(t),v)=b(0)v, \quad
\mt(t,\bar{x}(t),v)=2b_{x}(0)v,\\
& &\qquad \qquad\qquad\qquad\qquad\qquad\qquad\qquad\qquad\qquad
\forall  v\in U,\ \ \forall\ (\omega,t)\in\Omega\times[0,1].
\end{eqnarray*}
Thus, $\bar{u}(t)\equiv 0$ is a singular control in the sense of Pontryagin-type maximum principle on $U$.

Let $ v= 1$, we have
$$\mss(t,\bar{x}(t),v)=b(0)=\me~\mss(t,\bar{x}(t),v).$$
In this case, $\nabla\mss(t,\bar{x}(t),v)\equiv 0$, and
\begin{eqnarray*}
& &\inner{\mss(\t,\bar{x}(\t), v)}{
b(\t,\bar{x}(\t),v)-b(\t,\bar{x}(\t),\bar{u}(\t))}\\
& &+ \inner{\nabla\mss(\t,\bar{x}(\t),v)}{
\sigma(\t,\bar{x}(\t),v)-\sigma(\t,\bar{x}(\t),\bar{u}(\t))}\\
& &
+\frac{1}{2}\big\langle\mt(\t,\bar{x}(\t),v)
\big(\sigma(\t,\bar{x}(\t),v)-\sigma(\t,\bar{x}(\t),\bar{u}(\t))\big),\\
& &\qquad\qquad\qquad\quad\
\sigma(\t,\bar{x}(\t),v)-\sigma(\t,\bar{x}(\t),\bar{u}(\t))
\big\rangle\\
&=&b(0)^2+b_{x}(0)\\
&>& 0,\quad \forall (\omega,t)\in \Omega\times[0,1].
\end{eqnarray*}
Therefore, by Theorem \ref{Th 2ordercondition nonconvex malliavin}, $\bar{u}(t)\equiv 0$ is not an optimal control.
}
\end{example}

\begin{example}{\em
Let
$$
\left\{
\begin{array}{l}
dx(t)=(u(t)-1)dt+(x(t)-u(t))dW(t),\qquad t\in[0,1],\\
x(0)=1,
\end{array}\right.
$$
$U=\big\{-1,\ 0,\ 1\big\}$, and let
$$J(u(\cdot))=\frac{1}{24}\me~|x(1)-1|^4.$$

Obviously, $(\bar{x}(\cdot),\bar{u}(\cdot))\equiv(1,1)$ is the optimal pair.
The four adjoint equations with respect to $(\bar{x}(\cdot),\bar{u}(\cdot))$ are as follows:
$$
\left\{
\begin{array}{l}
dp_{1}(t)=-q_{1}(t)dt+q_{1}(t)dW(t), \qquad t\in[0,1],\qquad\qquad \qquad\ \ \\
p_{1}(1)=0;
\end{array}\right.
$$
$$
\left\{
\begin{array}{l}
dp_{2}(t)=-\Big[p_{2}(t)+2q_{2}(t)\Big]dt+q_{2}(t)dW(t), \qquad t\in[0,1],\qquad\\
p_{2}(1)=0;
\end{array}\right.
$$
$$
\left\{
\begin{array}{l}
dp_{3}(t)=-\Big[3p_{3}(t)+3q_{3}(t)\Big]dt+q_{3}(t)dW(t), \qquad t\in[0,1],\quad  \ \\
p_{3}(1)=0;
\end{array}\right.
$$
and
$$
\left\{
\begin{array}{l}
dp_{4}(t)=-\Big[6p_{4}(t)+4q_{4}(t)\Big]dt+q_{4}(t)dW(t), \qquad t\in[0,1],\quad
\\
p_{4}(1)=-1.
\end{array}\right.
$$
An easy computation shows that
\begin{eqnarray*}
& &(p_{1}(t),q_{1}(t))=(0,0),\ \ (p_{2}(t),q_{2}(t))=(0,0),\ \ \\
& &(p_{3}(t),q_{3}(t))=(0,0),\ \ (p_{4}(t),q_{4}(t))=(-e^{6-6t},0),\ \ \forall\ (\omega,t)\in\Omega\times[0,1].
\end{eqnarray*}
Then, we have
\begin{eqnarray*}
& &\mh(t,\bar{x}(t),v)=0,\quad
\mss(t,\bar{x}(t),v)=0, \quad
\mt(t,\bar{x}(t),v)=-\frac{1}{2}e^{6-6t}(v-1)^2,\\
& &\qquad \qquad \qquad \qquad \qquad \qquad \qquad \qquad \qquad \qquad
\forall v\in U,\ \ \forall\ (\omega,t)\in\Omega\times[0,1],
\end{eqnarray*}
and
\begin{eqnarray*}
& &\Big\langle\mt(t,\bar{x}(t),v)
\big(\sigma(\t,\bar{x}(\t),v)-\sigma(\t,\bar{x}(\t),\bar{u}(\t))\big),\\
& &\qquad \qquad \qquad \qquad \qquad \sigma(\t,\bar{x}(\t),v)-\sigma(\t,\bar{x}(\t),\bar{u}(\t))\Big\rangle\\
&=&-\frac{1}{2}e^{6-6t}(v-1)^4\le0,\quad
\forall v\in U,\ \ \forall\ (\omega,t)\in\Omega\times[0,1].
\end{eqnarray*}
Therefore, $\bar{u}(t)\equiv1$ is a singular optimal control on $U$, and the second-order necessary condition (\ref{2orderconditionth sequalszero}) holds.
}
\end{example}

\section{Proofs of the main results} This section is devoted to proving the main results of this paper, i.e., Theorems \ref{2orderconditionth} and  \ref{Th 2ordercondition nonconvex malliavin}. We need a known result.

\begin{lemma}\label{technical lemma}
{\rm (\cite[Lemma 4.1]{zhangH14a})} Let $\Phi(\cdot),\ \Psi(\cdot)\in L_{\mmf}^{2}(\Omega;L^{2}(0,T;\mrn))$. Then, for a.e. $\t\in [0,T)$, it holds that
\begin{equation}\label{technical lemma limit1}
\lim_{\ephs\to 0^+}\frac{1}{\ephs^2}\me\int_{\t}^{\t+\ephs}\Big\langle\Phi(\t),
\int_{\t}^{t} \Psi(s)ds\Big\rangle dt
=\frac{1}{2}\me~\inner{\Phi(\t)}{\Psi(\t)},
\end{equation}
\begin{equation}\label{technical lemma limit2}
\lim_{\ephs\to 0^+}\frac{1}{\ephs^2}\me\int_{\t}^{\t+\ephs}
\Big\langle\Phi(t), \int_{\t}^{t} \Psi(s)ds \Big\rangle dt
=\frac{1}{2}\me~\inner{\Phi(\t)}{\Psi(\t)}.
\end{equation}
\end{lemma}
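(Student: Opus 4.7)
The plan is to reduce both limits to applications of the Lebesgue differentiation theorem, viewing $\Phi(\cdot)$ and $\Psi(\cdot)$ as Bochner-integrable functions from $[0,T]$ into the Hilbert space $L^2(\Omega;\mrn)$. Since both processes lie in $L^2_{\mmf}(\Omega; L^2(0,T;\mrn))$, Fubini gives $\|\Phi(\cdot)\|_{L^2(\Omega)},\,\|\Psi(\cdot)\|_{L^2(\Omega)}\in L^2(0,T)$, so a.e. $\tau\in[0,T)$ is simultaneously a Lebesgue point of the scalar function $s\mapsto\|\Psi(s)\|_{L^2(\Omega)}^2$ and of the Bochner-valued maps $s\mapsto\Phi(s)$, $s\mapsto\Psi(s)$ (i.e., $\ephs^{-1}\int_\tau^{\tau+\ephs}\|\Psi(s)-\Psi(\tau)\|_{L^2(\Omega)}\,ds\to 0$ and analogously for $\Phi$). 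I fix such a $\tau$ throughout.

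For \eqref{technical lemma limit1}, I would first swap the order of integration via Fubini to get
\begin{equation*}
\frac{1}{\ephs^2}\,\me\Big\langle\Phi(\t),\int_\t^{\t+\ephs}(\t+\ephs-s)\Psi(s)\,ds\Big\rangle,
\end{equation*}
and then split the inner integrand as $\Psi(s)=\Psi(\t)+[\Psi(s)-\Psi(\t)]$. The $\Psi(\t)$ piece contributes exactly $\tfrac{1}{2}\me\inner{\Phi(\t)}{\Psi(\t)}$ because $\int_\t^{\t+\ephs}(\t+\ephs-s)\,ds=\ephs^2/2$. The remainder is controlled by Cauchy--Schwarz:
\begin{equation*}
\frac{1}{\ephs^2}\int_\t^{\t+\ephs}(\t+\ephs-s)\|\Phi(\t)\|_{L^2(\Omega)}\|\Psi(s)-\Psi(\t)\|_{L^2(\Omega)}\,ds \le \frac{\|\Phi(\t)\|_{L^2(\Omega)}}{\ephs}\int_\t^{\t+\ephs}\|\Psi(s)-\Psi(\t)\|_{L^2(\Omega)}\,ds,
\end{equation*}
which tends to $0$ by the Lebesgue point property for $\Psi$ at $\tau$.

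For \eqref{technical lemma limit2}, I would write $\Phi(t)=\Phi(\t)+[\Phi(t)-\Phi(\t)]$. The $\Phi(\t)$ piece reduces to \eqref{technical lemma limit1}. For the error piece, Minkowski followed by Cauchy--Schwarz gives
\begin{equation*}
\Big\|\!\int_\t^t\!\Psi(s)\,ds\Big\|_{L^2(\Omega)}\le (t-\t)^{1/2}\Big(\int_\t^{\t+\ephs}\!\|\Psi(s)\|_{L^2(\Omega)}^2\,ds\Big)^{1/2}\le C\,\ephs,
\end{equation*}
where the last bound uses the Lebesgue differentiation theorem for $\|\Psi(\cdot)\|_{L^2(\Omega)}^2$ at $\tau$. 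Combining with another Cauchy--Schwarz then bounds the error by
\begin{equation*}
\frac{C}{\ephs}\int_\t^{\t+\ephs}\|\Phi(t)-\Phi(\t)\|_{L^2(\Omega)}\,dt,
\end{equation*}
which vanishes as $\ephs\to 0^+$ by the Lebesgue point property for $\Phi$ at $\tau$.

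The main obstacle, modest as it is, lies in carefully justifying the Lebesgue point properties in the Bochner sense and in getting the right powers of $\ephs$ to cancel in the error estimate for \eqref{technical lemma limit2} (the rough Cauchy--Schwarz bound on $\int_\t^t\Psi(s)\,ds$ alone produces $\ephs^{-3/2}$, which only closes after invoking the Lebesgue differentiation bound on $\|\Psi(\cdot)\|^2_{L^2(\Omega)}$ to reclaim the missing half-power of $\ephs$). Apart from this estimate, everything is a routine Lebesgue-point/Fubini argument.
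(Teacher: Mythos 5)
Your proof is correct. Note that this paper does not actually prove the lemma---it is imported verbatim from Part I of the series (\cite[Lemma 4.1]{zhangH14a})---so there is no in-paper argument to compare with; your route through the vector-valued Lebesgue differentiation theorem, with Fubini giving $\int_\t^{\t+\ephs}(\t+\ephs-s)\,ds=\ephs^2/2$ and hence the factor $\tfrac12$, is the standard one for this identity. You also correctly isolate the only delicate step in the second limit (\ref{technical lemma limit2}): the crude bound $\big\|\int_\t^t\Psi(s)\,ds\big\|_{L^2(\Omega;\mrn)}\le(t-\t)^{1/2}\big(\int_\t^{\t+\ephs}\me\,|\Psi(s)|^2\,ds\big)^{1/2}$ loses half a power of $\ephs$, and it is precisely the Lebesgue-point property of $s\mapsto\me\,|\Psi(s)|^2$ that upgrades $\int_\t^{\t+\ephs}\me\,|\Psi(s)|^2\,ds$ to $O(\ephs)$, after which the Bochner Lebesgue-point property of $\Phi$ closes the estimate.
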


\subsection{Proof of Theorem \ref{2orderconditionth}}
Since $u(t)\equiv v$, $v\in U$ is an admissible control, in this subsection, we shall still denote by $\delta\varphi(t)$ the increment $\varphi(t,\bar{x}(t),v)-\varphi(t,\bar{x}(t),\bar{u}(t))$ and by $\delta\varphi_{x}(t)$ the increment $\varphi_{x}(t,\bar{x}(t),v)-\varphi_{x}(t,\bar{x}(t),\bar{u}(t))$ for $\varphi=b, \sigma, f$.
We only need to prove the condition (\ref{2ordercondition}) holds for a.e. $\t\in [0,T)$. Let $\t\in [0,T)$, $\ephs\in(0,T-\t)$ and $E_{\varepsilon}=[\t,\t+\varepsilon)\subset [0,T).$
For any fixed $v\in V$, define
$$
u^{\varepsilon}(t)=\left\{
\begin{array}{l}
v, \qquad\qquad t\in E_{\varepsilon},\\
\bar{u}(t), \qquad \quad t\in [0,T] \setminus E_{\varepsilon}.\\
\end{array}\right.
$$
Clearly, $u^{\varepsilon}(\cdot)\in \mmu_{ad}$. Since $\bar{u}(\cdot)$ is a singular control on $V$ in the sense of Pontryagin-type maximum principle,
$$\mh(t,\bar{x}(t),v)=0,\qquad  a.e.\ (\omega,t)\in \Omega\times[0,T].$$
Then, by Proposition \ref{variational formulation for noncov}, we have
\begin{eqnarray*}
\qquad 0&\ge&\frac{J(\bar{u}(\cdot))-J(u^{\varepsilon}(\cdot))}{\varepsilon^2}\\
&=&\frac{1}{\varepsilon^2}\me\int_{0}^{T}\Big[
\mh(t,\bar{x}(t),v)+\Big\langle\mss(t,\bar{x}(t),v),\gamma(t)\Big\rangle\\
& &\qquad\qquad\qquad
+\frac{1}{2}\Big\langle\mt(t,\bar{x}(t),v)y_{1}^{\varepsilon}(t),
y_{1}^{\varepsilon}(t)\Big\rangle
\Big]\chi_{E_{\varepsilon}}(t)dt+o(1)\qquad\ (\ephs\to 0^+)\\
&=&\frac{1}{\varepsilon^2}\me\int_{0}^{T}
\Big[\Big\langle\mss(t,\bar{x}(t),v),y_{1}^{\varepsilon}(t)
+y_{2}^{\varepsilon}(t)\Big\rangle\chi_{E_{\varepsilon}}(t)\\
& &\qquad\qquad\qquad
+\frac{1}{2}\Big\langle\mt(t,\bar{x}(t),v)y_{1}^{\varepsilon}(t),
y_{1}^{\varepsilon}(t)\Big\rangle
\chi_{E_{\varepsilon}}(t)\Big]dt+ o(1),\qquad (\ephs\to 0^+).
\end{eqnarray*}

Now, we divide the proof of (\ref{2ordercondition}) into 4 steps.

\textbf{Step 1:} In this step, we prove that
\begin{eqnarray}\label{limt for Sy1}
& &\limsup_{\varepsilon\to 0^+}\frac{1}{\varepsilon^2}
\me\int_{0}^{T}\Big\langle\mss(t,\bar{x}(t),v),y_{1}^{\varepsilon}(t)\Big\rangle
\chi_{E_{\varepsilon}}(t)dt\nonumber\\
&=& \frac{1}{2}\partial^{+}_{\t}\Big(\mss(\t,\bar{x}(\t),v);
\delta\sigma(\t)\Big),\quad \ a.e.\ \t\in [0,T).
\end{eqnarray}

By  \cite[Theorem 1.6.14, p. 47)]{Yong99}, $y_{1}^{\varepsilon}(\cdot)$ has the following explicit representation:
\begin{eqnarray}\label{y1(t)}
y_{1}^{\varepsilon}(t)&=&-\Phi(t)\int_{0}^{t}\Phi(s)^{-1}\sigma_{x}(s)\delta\sigma(s)
\chi_{E_{\varepsilon}}(s)ds\nonumber\\
& &+\Phi(t)\int_{0}^{t}\Phi(s)^{-1}\delta\sigma(s)\chi_{E_{\varepsilon}}(s)dW(s).
\end{eqnarray}
Consequently,
\begin{eqnarray}\label{th41 eq1}
& &\frac{1}{\varepsilon^2}\me\int_{0}^{T}\Big\langle\mss(t,\bar{x}(t),v),
y_{1}^{\varepsilon}(t)\Big\rangle\chi_{E_{\varepsilon}}(t)dt\nonumber \\
&=&-\frac{1}{\varepsilon^2}\me\int_{\t}^{\t+\varepsilon}
\Big\langle\mss(t,\bar{x}(t),v), \Phi(t)\int_{\t}^{t}\Phi(s)^{-1}
\sigma_{x}(s)\delta\sigma(s)ds\Big\rangle dt\nonumber\\
& &+\frac{1}{\varepsilon^2}\me\int_{\t}^{\t+\varepsilon}
\Big\langle\mss(t,\bar{x}(t),v),\Phi(t)
\int_{\t}^{t}\Phi(s)^{-1}\delta\sigma(s)dW(s)\Big\rangle dt.
\end{eqnarray}
By Lemma \ref{technical lemma}, it follows that
\begin{eqnarray}\label{limit s part1}
& &\lim_{\varepsilon\to 0^+}\Big[-\frac{1}{\varepsilon^2}\me\int_{\t}^{\t+\varepsilon}
\Big\langle\mss(t,\bar{x}(t),v), \Phi(t)\int_{\t}^{t}\Phi(s)^{-1}
\sigma_{x}(s)\delta\sigma(s)ds\Big\rangle dt\Big]\quad\nonumber\\
&=&-\frac{1}{2}\me~\Big\langle\mss(\t,\bar{x}(\t),v),
\sigma_{x}(\t)\delta\sigma(\t)\Big\rangle,\ \ a.e.\ \  \t\in [0,T).
\end{eqnarray}
Next, by (\ref{Phi}), we deduce that
\begin{eqnarray}\label{th41 eq2}
& &\limsup_{\varepsilon\to 0^+}\frac{1}{\varepsilon^2}\me\int_{\t}^{\t+\varepsilon}
\Big\langle\mss(t,\bar{x}(t),v),\Phi(t)\int_{\t}^{t}\Phi(s)^{-1}\delta\sigma(s)
dW(s)\Big\rangle  dt\nonumber\\
&=&\limsup_{\varepsilon\to 0^+}\frac{1}{\varepsilon^2}\me\int_{\t}^{\t+\varepsilon}
\Big\langle\mss(t,\bar{x}(t),v),\Phi(\t)\int_{\t}^{t}\Phi(s)^{-1}\delta\sigma(s)
dW(s)\Big\rangle dt\nonumber\\
& &+\limsup_{\varepsilon\to 0^+}\frac{1}{\varepsilon^2}\me\int_{\t}^{\t+\varepsilon}
\Big\langle\mss(t,\bar{x}(t),v),\int_{\t}^{t}b_{x}(s)\Phi(s)ds\cdot\nonumber\\
& &\qquad\qquad\qquad\qquad\qquad\qquad\qquad\qquad\qquad\ \
\int_{\t}^{t}\Phi(s)^{-1}\delta\sigma(s)dW(s)
\Big\rangle dt\nonumber\\
& & +\limsup_{\varepsilon\to 0^+}\frac{1}{\varepsilon^2}\me\int_{\t}^{\t+\varepsilon}
\Big\langle\mss(t,\bar{x}(t),v),\int_{\t}^{t}\sigma_{x}(s)\Phi(s)dW(s)\cdot\nonumber\\
& &\qquad\qquad\qquad\qquad\qquad\qquad\qquad\qquad\qquad\ \
\int_{\t}^{t}\Phi(s)^{-1}\delta\sigma(s)dW(s)
\Big\rangle dt.\nonumber\\
\end{eqnarray}
By (\ref{martingale exp of mss(t)}) and (\ref{2ozxndition}), it holds that
\begin{eqnarray}\label{th41 eq3}
& &\limsup_{\varepsilon\to 0^+}\frac{1}{\varepsilon^2}\me\int_{\t}^{\t+\varepsilon}
\Big\langle\mss(t,\bar{x}(t),v), \Phi(\t)\int_{\t}^{t}\Phi(s)^{-1}\delta\sigma(s)
dW(s)\Big\rangle dt\nonumber\\
&=&\limsup_{\varepsilon\to 0^+}\frac{1}{\varepsilon^2}\me\int_{\t}^{\t+\varepsilon}
\Big\langle\me~\mss(t,\bar{x}(t),v)+\int_{0}^{t}\phi_{v}(s,t)dW(s),\nonumber\\
& & \qquad\qquad\qquad\qquad\qquad\qquad\qquad
\int_{\t}^{t}\Phi(\t)
\Phi(s)^{-1}\delta\sigma(s)dW(s)\Big\rangle dt\nonumber\\
&=&\limsup_{\varepsilon\to 0^+}\frac{1}{\varepsilon^2}\me\int_{\t}^{\t+\varepsilon}
\int_{\t}^{t}\Big\langle\phi_{v}(s,t),
\Phi(\t)\Phi(s)^{-1}\delta\sigma(s)\Big\rangle dsdt\nonumber\\
&=&\frac{1}{2}\partial^{+}_{\t}\Big(\mss(\t,\bar{x}(\t),v);
\delta\sigma(\t)\Big)
\qquad a.e. \ \t\in[0,T].
\end{eqnarray}
On the other hand,
\begin{eqnarray}\label{th41 eq3add}
& &\lim_{\varepsilon\to 0^+}\Big|\frac{1}{\varepsilon^2}\me\int_{\t}^{\t+\varepsilon}
\Big\langle\mss(t,\bar{x}(t),v),\int_{\t}^{t}b_{x}(s)\Phi(s)ds\cdot\nonumber\\
& & \qquad\qquad\qquad\qquad\quad\qquad\qquad\qquad\qquad
\int_{\t}^{t}\Phi(s)^{-1}\delta\sigma(s)dW(s)
\Big\rangle dt\Big|\nonumber\\
&\le&\lim_{\varepsilon\to 0^+}\frac{1}{\varepsilon^2}\int_{\t}^{\t+\varepsilon}
\Big[\Big(\me~\big|\mss(t,\bar{x}(t),v)\big|^2\Big)^{\frac{1}{2}}
\Big(\me~\Big|\int_{\t}^{t}b_{x}(s)\Phi(s)ds\Big|^4\Big)^{\frac{1}{4}}\cdot\nonumber\\
& &\qquad\qquad\qquad\qquad\qquad\qquad\qquad\quad
\Big(\me~\Big|\int_{\t}^{t}\Phi(s)^{-1}
\delta\sigma(s)dW(s)\Big|^4\Big)^{\frac{1}{4}}\Big]dt\nonumber\\
&\le&\lim_{\varepsilon\to 0^+}\frac{C}{\varepsilon^2}\int_{\t}^{\t+\varepsilon}
(t-\t)^{\frac{3}{2}}\Big(\me~\big|\mss(t,\bar{x}(t),v)\big|^2\Big)^{\frac{1}{2}}dt\nonumber\\
&=& 0  \ \ \qquad\qquad \ \ \ \ a.e.\ \ \t\in[t_{i},T].
\end{eqnarray}
Also,
\begin{eqnarray}\label{th41 eq4}
& &\lim_{\varepsilon\to 0^+}\Big|\frac{1}{\varepsilon^2}\me\int_{\t}^{\t+\varepsilon}
\Big\langle\mss(t,\bar{x}(t),v),\int_{\t}^{t}\sigma_{x}(s)\Phi(s)dW(s)\cdot\nonumber\\
& & \qquad\quad
\int_{\t}^{t}\Phi(s)^{-1}\delta\sigma(s)dW(s)\Big\rangle dt -\frac{1}{2}\me~\Big\langle\mss(\t,\bar{x}(\t),v),
\sigma_{x}(\t)\delta\sigma(\t)\Big\rangle\Big|\nonumber\\
\quad\ \ &\le&\lim_{\varepsilon\to 0^+}\Big|\frac{1}{\varepsilon^2}\me\int_{\t}^{\t+\varepsilon}
\Big\langle\mss(t,\bar{x}(t),v)-\mss(\t,\bar{x}(\t),v),\nonumber\\
& &\qquad\qquad\qquad\qquad\quad
\int_{\t}^{t}\sigma_{x}(s)\Phi(s)dW(s)
\int_{\t}^{t}\Phi(s)^{-1}\delta\sigma(s)dW(s)
\Big\rangle dt\Big|\nonumber\\
& &+\lim_{\varepsilon\to 0^+} \Big|
\frac{1}{\varepsilon^2}\me\int_{\t}^{\t+\varepsilon}
\Big\langle\mss(\t,\bar{x}(\t),v),\int_{\t}^{t}\sigma_{x}(s)\Phi(s)dW(s)\cdot\nonumber\\
& & \qquad\quad\
\int_{\t}^{t}\Phi(s)^{-1}\delta\sigma(s)dW(s)\Big\rangle dt -\frac{1}{2}\me~\Big\langle\mss(\t,\bar{x}(\t),v),
\sigma_{x}(\t)\delta\sigma(\t)\Big\rangle \Big|\nonumber\\
&\le& \lim_{\varepsilon\to 0^+}\frac{1}{\varepsilon^2}\int_{\t}^{\t+\varepsilon}
\Big[\me~\big|\mss(t,\bar{x}(t),v)
-\mss(\t,\bar{x}(\t),v)\big|^2\Big]^{\frac{1}{2}}\cdot\nonumber\\
& &\qquad\quad
\Big[\me~\Big(\int_{\t}^{t}\big|\sigma_{x}(s)\Phi(s)
\big|^2ds\Big)^2\Big]^{\frac{1}{4}}
\Big[\me~\Big(\int_{\t}^{t}\big|\Phi(s)^{-1}\delta\sigma(s)
\big|^2ds\Big)^2\Big]^{\frac{1}{4}}dt\nonumber\\
& &  +\lim_{\varepsilon\to 0^+}\Big|\frac{1}{\varepsilon^2}\int_{\t}^{\t+\varepsilon}
\int_{\t}^{t}\me~\Big\langle\mss(\t,\bar{x}(\t),v),\sigma_{x}(s)\delta\sigma(s)
-\sigma_{x}(\t)\delta\sigma(\t)\Big\rangle dsdt\Big|\nonumber\\
&\le&\lim_{\varepsilon\to 0^+}\frac{C}{\varepsilon}\int_{\t}^{\t+\varepsilon}
\Big[\me~\big|\mss(t,\bar{x}(t),v)-\mss(\t,\bar{x}(\t),v)
\big|^2\Big]^{\frac{1}{2}}dt\nonumber\\
& & +\lim_{\varepsilon\to 0^+}\Big|\frac{1}{\varepsilon^2}\int_{\t}^{\t+\varepsilon}
\int_{\t}^{t}\me~\Big\langle\mss(\t,\bar{x}(\t),v),\sigma_{x}(s)\delta\sigma(s)
-\sigma_{x}(\t)\delta\sigma(\t)\Big\rangle dsdt\Big|\nonumber\\
&=&0, \qquad\  \ a.e.\ \ \t\in[0,T).
\end{eqnarray}
Then, by (\ref{th41 eq2})--(\ref{th41 eq4}), it follows that, for
 a.e.  $\t\in[0,T)$,
\begin{eqnarray}\label{Limit S part2}
& &\limsup_{\varepsilon\to 0^+}\frac{1}{\varepsilon^2}\me\int_{\t}^{\t+\varepsilon}
\Big\langle \mss(t,\bar{x}(t),v), \Phi(t)
\int_{\t}^{t}\Phi(s)^{-1}\delta\sigma(s)dW(s)\Big\rangle dt \nonumber\\
&=& \frac{1}{2}\partial^{+}_{\t}\Big(\mss(\t,\bar{x}(\t),v);
\delta\sigma(\t)\Big)+\frac{1}{2}\me~\Big\langle\mss(\t,\bar{x}(\t),v),
\sigma_{x}(\t)\delta\sigma(\t)\Big\rangle.
\end{eqnarray}

Combining (\ref{limit s part1}) with (\ref{Limit S part2}), we obtain (\ref{limt for Sy1}).

\textbf{Step 2:} In this step, we prove that, for  a.e. $\t\in[0,T)$,
\begin{eqnarray}\label{conclusion in step2}
\quad\lim_{\varepsilon\to 0^+}\frac{1}{\varepsilon^2}
\me\int_{0}^{T}\Big\langle\mss(t,\bar{x}(t),v),
y_{2}^{\varepsilon}(t)\Big\rangle\chi_{E_{\varepsilon}}(t)dt
= \frac{1}{2}\me ~\Big\langle\mss(\t,\bar{x}(\t),v), \delta b(\t)\Big\rangle.
\end{eqnarray}

Similar to (\ref{y1(t)}), the explicit representation of $y_{2}^{\varepsilon}(\cdot)$ is given as follows:
\begin{eqnarray}\label{y2(t)}
y_{2}^{\varepsilon}(t)
&=&\Phi(t)\int_{0}^{t}\Phi(s)^{-1}
\Big[\frac{1}{2}b_{xx}(s)\big(y_{1}^{\varepsilon}(s),y_{1}^{\varepsilon}(s)\big)
+\delta b(s)\chi_{E_{\varepsilon}}(s)\nonumber\\
& &\qquad\qquad\
-\frac{1}{2}\sigma_{x}(s)
\sigma_{xx}(s)\big(y_{1}^{\varepsilon}(s),y_{1}^{\varepsilon}(s)\big)
-\sigma_{x}(s)\delta\sigma_{x}(s)y_{1}^{\varepsilon}(s)
\chi_{E_{\varepsilon}}(s)\Big]ds\nonumber\\
& &+\Phi(t)\int_{0}^{t}\Phi(s)^{-1}
\Big[\frac{1}{2}\sigma_{xx}(s)\big(y_{1}^{\varepsilon}(s),y_{1}^{\varepsilon}(s)\big)
+\delta\sigma_{x}(s)y_{1}^{\varepsilon}(s)\chi_{E_{\varepsilon}}(s)\Big]dW(s).
\end{eqnarray}
Then,
\begin{eqnarray}\label{s(t)y_2(t)}
& &\lim_{\varepsilon\to 0^+}\frac{1}{\varepsilon^2}
\me\int_{0}^{T}\Big\langle\mss(t,\bar{x}(t),v), y_{2}^{\varepsilon}(t)\Big\rangle
\chi_{E_{\varepsilon}}(t)dt\nonumber\\
&=&\lim_{\varepsilon\to 0^+}\frac{1}{2\varepsilon^2}\me\int_{\t}^{\t+\varepsilon}\Big\langle
\mss(t,\bar{x}(t),v),\Phi(t)\int_{0}^{t}\Phi(s)^{-1}\Big[b_{xx}(s)
\big(y_{1}^{\varepsilon}(s),y_{1}^{\varepsilon}(s)\big)\nonumber\\
& &\qquad\qquad\qquad\qquad\qquad\qquad\qquad\qquad\qquad
-\sigma_{x}(s)\sigma_{xx}(s)
\big(y_{1}^{\varepsilon}(s),y_{1}^{\varepsilon}(s)\big)\Big]ds\Big\rangle dt\nonumber\\
& &+\lim_{\varepsilon\to 0^+}\frac{1}{\varepsilon^2}\me\int_{\t}^{\t+\varepsilon}
\Big\langle\mss(t,\bar{x}(t),v),\Phi(t)\int_{0}^{t}\Phi(s)^{-1}\delta b(s)\chi_{E_{\varepsilon}}(s)ds\Big\rangle dt\nonumber\\
& &-\lim_{\varepsilon\to 0^+}\frac{1}{\varepsilon^2}\me\int_{\t}^{\t+\varepsilon}
\Big\langle\mss(t,\bar{x}(t),v),
\Phi(t)\int_{0}^{t}\Phi(s)^{-1}\sigma_{x}(s)\delta\sigma_{x}(s)y_{1}^{\varepsilon}(s)
\chi_{E_{\varepsilon}}(s)ds\Big\rangle dt\nonumber\\
& &+\lim_{\varepsilon\to 0^+}\frac{1}{\varepsilon^2}\me\int_{\t}^{\t+\varepsilon}
\Big\langle\mss(t,\bar{x}(t),v), \Phi(t)\int_{0}^{t}\Phi(s)^{-1}
\Big[\frac{1}{2}\sigma_{xx}(s)
\big(y_{1}^{\varepsilon}(s),y_{1}^{\varepsilon}(s)\big)\nonumber\\
& &\qquad\qquad\quad \qquad\qquad\quad \qquad\qquad\quad \qquad\qquad\
+\delta\sigma_{x}(s)y_{1}^{\varepsilon}(s)
\chi_{E_{\varepsilon}}(s)\Big]dW(s)\Big\rangle dt.\nonumber\\
\end{eqnarray}

By (\ref{y1(t)}),  $y_{1}^{\varepsilon}(t)=0$ for any $t\in[0,\t)$. Therefore, by Lemmas \ref{estimateofvariequ} and \ref{bouned of S},
\begin{eqnarray}\label{s(t)y_2(t) prat1}
& &\lim_{\varepsilon\to 0^+}\Big|\frac{1}{2\varepsilon^2}
\me\int_{\t}^{\t+\varepsilon}\Big\langle
\mss(t,\bar{x}(t),v),\Phi(t)\int_{0}^{t}\Phi(s)^{-1}\Big[b_{xx}(s)
\big(y_{1}^{\varepsilon}(s),y_{1}^{\varepsilon}(s)\big)\nonumber\\
& &\qquad\qquad\qquad\qquad\qquad\qquad\qquad\qquad\qquad
-\sigma_{x}(s)\sigma_{xx}(s)
\big(y_{1}^{\varepsilon}(s),y_{1}^{\varepsilon}(s)\big)\Big]ds\Big\rangle dt\Big|\nonumber\\
&=&\lim_{\varepsilon\to 0^+}\Big|\frac{1}{2\varepsilon^2}
\me\int_{\t}^{\t+\varepsilon}\Big\langle
\mss(t,\bar{x}(t),v),\Phi(t)\int_{\t}^{t}\Phi(s)^{-1}\Big[b_{xx}(s)
\big(y_{1}^{\varepsilon}(s),y_{1}^{\varepsilon}(s)\big)\nonumber\\
& &\qquad\qquad\qquad\qquad\qquad\qquad\qquad\qquad\qquad
-\sigma_{x}(s)\sigma_{xx}(s)
\big(y_{1}^{\varepsilon}(s),y_{1}^{\varepsilon}(s)\big)\Big]ds\Big\rangle dt\Big|\nonumber\\
&\le& \lim_{\varepsilon\to 0^+}\frac{1}{2\varepsilon^2}
\Big\{\int_{\t}^{\t+\varepsilon}\me~
\big|\mss(t,\bar{x}(t),v)\big|^2dt\Big\}^{\frac{1}{2}}
\Big\{\int_{\t}^{\t+\varepsilon}\me~\Big|
\Phi(t)\int_{\t}^{t}\Phi(s)^{-1}\cdot\nonumber\\
& &\qquad\qquad\quad\qquad\ \
\Big[b_{xx}(s)\big(y_{1}^{\varepsilon}(s),y_{1}^{\varepsilon}(s)\big)
-\sigma_{x}(s)\sigma_{xx}(s)
\big(y_{1}^{\varepsilon}(s),y_{1}^{\varepsilon}(s)\big)\Big]
ds\Big|^2dt\Big\}^{\frac{1}{2}}\nonumber\\
&\le& \lim_{\varepsilon\to 0^+}\frac{C}{\varepsilon^2}
\Big\{\int_{\t}^{\t+\varepsilon}\me~
\big|\mss(t,\bar{x}(t),v)\big|^2dt\Big\}^{\frac{1}{2}}\cdot\nonumber\\
& &\qquad\quad\qquad\qquad\
\Big\{\int_{\t}^{\t+\varepsilon}(t-\t)^{2}\me~\Big[\sup_{s\in[0,T]}
\big|\Phi(t)\Phi(s)^{-1}\big|\cdot\sup_{s\in[0,T]}
\big|y_{1}^{\varepsilon}(s)\big|^{2}\Big]^{2}dt
\Big\}^{\frac{1}{2}}\nonumber\\
&\le& \lim_{\varepsilon\to 0^+}\frac{C}{\varepsilon^{\frac{1}{2}}}
\Big\{\int_{\t}^{\t+\varepsilon}\me~
\big|\mss(t,\bar{x}(t),v)\big|^2dt\Big\}^{\frac{1}{2}}\cdot
\Big\{\me~\Big[\sup_{t\in[0,T]}
\big|y_{1}^{\varepsilon}(t)\big|^{8}\Big]\Big\}^{\frac{1}{4}}\nonumber\\
&=& 0, \qquad\ a.e.\ \ \t\in[0,T).\nonumber\\
\end{eqnarray}

Next, from Lemma \ref{technical lemma} we conclude that
\begin{eqnarray}\label{th41 eq6}
& &\lim_{\varepsilon\to 0^+}\frac{1}{\varepsilon^2}\me\int_{\t}^{\t+\varepsilon}
\Big\langle\mss(t,\bar{x}(t),v), \Phi(t)\int_{0}^{t}\Phi(s)^{-1}\delta b(s)
\chi_{E_{\varepsilon}}(s)ds \Big\rangle dt\nonumber\\
&=& \frac{1}{2}\me~\Big\langle\mss(\t,\bar{x}(\t),v),\delta b(\t)\Big\rangle\qquad a.e.\ \t\in [0,T).
\end{eqnarray}

Also, by Lemmas \ref{estimateofvariequ} and \ref{bouned of S}, we deduce  that
\begin{eqnarray}\label{th41 eq7}
& &\lim_{\varepsilon\to 0^+}\Big|\frac{1}{\varepsilon^2}\me\int_{\t}^{\t+\varepsilon}
\Big\langle \mss(t,\bar{x}(t),v), \Phi(t)
\int_{0}^{t}\Phi(s)^{-1}\sigma_{x}(s)\delta\sigma_{x}(s)y_{1}^{\varepsilon}(s)
\chi_{E_{\varepsilon}}(s)ds\Big\rangle dt\Big|\nonumber\\
&=&\lim_{\varepsilon\to 0^+}\Big|\frac{1}{\varepsilon^2}\me\int_{\t}^{\t+\varepsilon}
\Big\langle\mss(t,\bar{x}(t),v), \Phi(t)
\int_{\t}^{t}\Phi(s)^{-1}\sigma_{x}(s)\delta\sigma_{x}(s)
y_{1}^{\varepsilon}(s)ds\Big\rangle dt\Big|\nonumber\\
&\le&\lim_{\varepsilon\to 0^+}\frac{1}{\varepsilon^2}
\Big[\int_{\t}^{\t+\varepsilon}
\me~\big|\mss(t,\bar{x}(t),v)\big|^2dt\Big]^{\frac{1}{2}}\cdot\nonumber\\
& &\qquad\qquad\qquad\qquad\qquad
\Big[\int_{\t}^{\t+\varepsilon}\me~\big|\Phi(t)\int_{\t}^{t}
\Phi(s)^{-1}\sigma_{x}(s)\delta\sigma_{x}(s)y_{1}^{\varepsilon}(s)ds\big|^2dt
\Big]^{\frac{1}{2}}\nonumber\\
&\le&\lim_{\varepsilon\to 0^+}\frac{C}{\varepsilon^2}
\Big\{\int_{\t}^{\t+\varepsilon}
\me~\big|\mss(t,\bar{x}(t),v)\big|^2dt\Big\}^{\frac{1}{2}}\cdot\nonumber\\
& &\qquad\qquad\qquad\quad
\Big\{\int_{\t}^{\t+\varepsilon}(t-\t)^2
\me~\Big[\sup_{s\in[0,T]}\big|
\Phi(t)\Phi(s)^{-1}\big|\cdot\sup_{s\in[0,T]} \big|y_{1}^{\varepsilon}(s)\big|\Big]^{2}dt\Big\}^{\frac{1}{2}}\nonumber\\
&\le&\lim_{\varepsilon\to 0^+}\frac{C}{\varepsilon^{\frac{1}{2}}}
\Big\{\int_{\t}^{\t+\varepsilon}
\me~\big|\mss(t,\bar{x}(t),v)\big|^2dt\Big\}^{\frac{1}{2}}\cdot
\Big\{\me~\Big[\sup_{s\in[0,T]} |y_{1}^{\varepsilon}(s)|^4\Big]\Big\}^{\frac{1}{4}}\nonumber\\
&=&0,\ \qquad\ \ a.e.\ \ \t\in[0,T).\nonumber\\
\end{eqnarray}

In a similar way, we obtain that
\begin{eqnarray}\label{th41 eq8}
& &\lim_{\varepsilon\to 0^+}\Big|\frac{1}{\varepsilon^2}\me\int_{\t}^{\t+\varepsilon}
\Big\langle\mss(t,\bar{x}(t),v),\Phi(t)\int_{0}^{t}\Phi(s)^{-1}
\Big[\frac{1}{2}\sigma_{xx}(s)
\big(y_{1}^{\varepsilon}(s),y_{1}^{\varepsilon}(s)\big)\nonumber\\
& &\qquad\qquad\qquad\qquad\qquad\qquad\qquad\qquad\qquad
+\delta\sigma_{x}(s)y_{1}^{\varepsilon}(s)\chi_{E_{\varepsilon}}(s)
\Big]dW(s)\Big\rangle dt\Big|\nonumber\\
&=&\lim_{\varepsilon\to 0^+}\Big|\frac{1}{\varepsilon^2}\me\int_{\t}^{\t+\varepsilon}
\Big\langle\mss(t,\bar{x}(t),v),\Phi(t)\int_{\t}^{t}\Phi(s)^{-1}
\Big[\frac{1}{2}\sigma_{xx}(s)
\big(y_{1}^{\varepsilon}(s),y_{1}^{\varepsilon}(s)\big)\nonumber\\
& &\qquad\qquad\qquad\qquad\qquad\qquad\qquad\qquad\qquad\qquad\quad
+\delta\sigma_{x}(s)y_{1}^{\varepsilon}(s)
\Big]dW(s)\Big\rangle dt\Big|\nonumber\\
&\le&\lim_{\varepsilon\to 0^+}\Big|\frac{1}{\varepsilon^2}\me\int_{\t}^{\t+\varepsilon}\Big\langle
\Phi(t)^{\top}\mss(t,\bar{x}(t),v)-\Phi(\t)^{\top}\mss(\t,\bar{x}(\t),v),\nonumber\\
& &\qquad\quad\qquad\quad
\int_{\t}^{t}\Phi(s)^{-1}
\Big[\frac{1}{2}\sigma_{xx}(s)
\big(y_{1}^{\varepsilon}(s),y_{1}^{\varepsilon}(s)\big)
+\delta\sigma_{x}(s)y_{1}^{\varepsilon}(s)
\Big]dW(s)\Big\rangle dt\Big|\nonumber\\
& &+\lim_{\varepsilon\to 0^+}\Big|\frac{1}{\varepsilon^2}\int_{\t}^{\t+\varepsilon}
\me\Big\langle\mss(\t,\bar{x}(\t),v),
\Phi(\t)\int_{\t}^{t}\Phi(s)^{-1}\Big[\frac{1}{2}\sigma_{xx}(s)
\big(y_{1}^{\varepsilon}(s),y_{1}^{\varepsilon}(s)\big)\nonumber\\
& &\qquad\qquad\qquad\qquad\qquad\qquad\qquad\qquad\qquad\qquad\quad
+\delta\sigma_{x}(s)y_{1}^{\varepsilon}(s)
\Big]dW(s)\Big\rangle dt\Big|\nonumber\\
&\le&\lim_{\varepsilon\to 0^+}\frac{1}{\varepsilon^2}\Big\{\int_{\t}^{\t+\varepsilon}
\me~\Big|\Phi(t)^{\top}\mss(t,\bar{x}(t),v)-
\Phi(\t)^{\top}\mss(\t,\bar{x}(\t),v)\Big|^2dt\Big\}^{\frac{1}{2}}\cdot\nonumber\\
& &\qquad\
\Big\{\int_{\t}^{\t+\varepsilon}
\int_{\t}^{t}\me~\Big|\Phi(s)^{-1}
\Big[\frac{1}{2}\sigma_{xx}(s)
\big(y_{1}^{\varepsilon}(s),y_{1}^{\varepsilon}(s)\big)
+\delta\sigma_{x}(s)y_{1}^{\varepsilon}(s)
\Big]
\Big|^2dsdt\Big\}^{\frac{1}{2}}\nonumber\\
&\le&\lim_{\varepsilon\to 0^+}\frac{C}{\varepsilon}\Big\{\int_{\t}^{\t+\varepsilon}
\me~\Big|\Phi(t)^{\top}\mss(t,\bar{x}(t),v)-
\Phi(\t)^{\top}\mss(\t,\bar{x}(\t),v)\Big|^2dt\Big\}^{\frac{1}{2}}\cdot\nonumber\\
& &\qquad\qquad\qquad\qquad\qquad
\Big\{\Big[\me\Big(\sup_{t\in[0,T]}\big|y_{1}^{\varepsilon}(t)
\big|^8\Big)\Big]^{\frac{1}{2}}
+\Big[\me\Big(\sup_{t\in[0,T]}\big|y_{1}^{\varepsilon}(t)
\big|^4\Big)\Big]^{\frac{1}{2}}\Big\}^{\frac{1}{2}}\nonumber\\
&=&0, \ \ \qquad \ a.e.\ \ \t\in[0,T).\nonumber\\
\end{eqnarray}
Here, we have used the fact that
 $$
 \me\Big\langle\mss(\t,\bar{x}(\t),v),
\Phi(\t)\int_{\t}^{t}\Phi(s)^{-1}\Big[\frac{1}{2}\sigma_{xx}(s)
\big(y_{1}^{\varepsilon}(s),y_{1}^{\varepsilon}(s)\big)
+\delta\sigma_{x}(s)y_{1}^{\varepsilon}(s)
\Big]dW(s)\Big\rangle=0 ,
$$
for any $t\in [\tau,T]$.

Combining (\ref{s(t)y_2(t) prat1})--(\ref{th41 eq8}) with (\ref{s(t)y_2(t)}), we obtain (\ref{conclusion in step2}).

\textbf{Step 3:} In this step, we prove that
\begin{eqnarray}\label{th41 step3}
& &\lim_{\varepsilon\to 0^+}\frac{1}{\varepsilon^2}
\me\int_{0}^{T}\Big\langle\mt(t,\bar{x}(t),v)y_{1}^{\varepsilon}(t),
y_{1}^{\varepsilon}(t)\Big\rangle
\chi_{E_{\varepsilon}}(t)dt
=\frac{1}{2}\me ~\Big\langle\mt(\t,\bar{x}(\t),v)\delta\sigma(\t),\delta\sigma(\t)\Big\rangle.\nonumber\\
\end{eqnarray}

Similar to the pervious discusses, we have
\begin{eqnarray*}
& &\lim_{\varepsilon\to 0^+}\frac{1}{\varepsilon^2}
\me\int_{0}^{T}\Big\langle\mt(t,\bar{x}(t),v)y_{1}^{\varepsilon}(t),
y_{1}^{\varepsilon}(t)\Big\rangle
\chi_{E_{\varepsilon}}(t)dt\nonumber\\
&=&\lim_{\varepsilon\to 0^+}\frac{1}{\varepsilon^2}
\me\int_{\t}^{\t+\varepsilon}\Big\langle
\mt(t,\bar{x}(t),v)
\Big(-\Phi(t)\int_{\t}^{t}\Phi(s)^{-1}\sigma_{x}(s)\delta\sigma(s)
ds\nonumber\\
& & \qquad\qquad \quad\qquad\qquad \quad\qquad\qquad\qquad
+\Phi(t)\int_{\t}^{t}\Phi(s)^{-1}\delta\sigma(s)dW(s)\Big),
\nonumber\\
& &\qquad \qquad
-\Phi(t)\int_{\t}^{t}\Phi(s)^{-1}\sigma_{x}(s)\delta\sigma(s)ds
+\Phi(t)\int_{\t}^{t}\Phi(s)^{-1}\delta\sigma(s)dW(s)\Big\rangle dt\\
&=&\lim_{\varepsilon\to 0^+}\frac{1}{\varepsilon^2}
\me\int_{\t}^{\t+\varepsilon}\Big\langle
\mt(t,\bar{x}(t),v)\Phi(t)\int_{\t}^{t}\Phi(s)^{-1}\delta\sigma(s)dW(s),\\
& &\qquad \qquad\qquad \qquad\qquad \qquad\qquad \qquad\qquad
\Phi(t)\int_{\t}^{t}\Phi(s)^{-1}\delta\sigma(s)dW(s)\Big\rangle dt\\
&=&\frac{1}{2}\me ~\Big\langle\mt(\t,\bar{x}(\t),v)\delta\sigma(\t),\delta\sigma(\t)\Big\rangle,
 \qquad \ \ \ \ a.e.\ \ \t\in[0,T).
\end{eqnarray*}
This proves (\ref{th41 step3}).

\textbf{Step 4:}
From Step 1--Step 3, we have proved that, for any $v\in V$,
\begin{eqnarray*}
0&\ge&\limsup_{\varepsilon\to 0^+}\frac{J(\bar{u}(\cdot))-J(u^{\varepsilon}(\cdot))}{\varepsilon^2}\\
&=&\frac{1}{2}\me ~\Big\langle\mss(\t,\bar{x}(\t),v),\delta b(\t)\Big\rangle+\frac{1}{2}\partial^{+}_{\t}\Big(\mss(\t,\bar{x}(\t),v);
\delta\sigma(\t)\Big)\\
& &+\frac{1}{4}\me ~\Big\langle\mt(\t,\bar{x}(\t),v)\delta\sigma(\t),\delta\sigma(\t)\Big\rangle,
\ \qquad  a.e.\  \t\in [0,T).
\end{eqnarray*}
Therefore, for any $v\in V$, it follows that
\begin{eqnarray*}
& &\me ~\Big\langle\mss(\t,\bar{x}(\t),v),\delta b(\t)\Big\rangle+\partial^{+}_{\t}\Big(\mss(\t,\bar{x}(\t),v);
\delta\sigma(\t)\Big)\\
& &+\frac{1}{2}\me ~\Big\langle\mt(\t,\bar{x}(\t),v)\delta\sigma(\t),\delta\sigma(\t)\Big\rangle\le 0 \ \ \  a.e.\  \t\in [0,T).\qquad
\end{eqnarray*}
This completes the proof of Theorem \ref{2orderconditionth}.

\subsection{Proof of Theorem \ref{Th 2ordercondition nonconvex malliavin}} We borrow some idea from the proof of  \cite[Theorem 3.9]{zhangH14a}.
Denote by $\{t_{i}\}_{i=1}^{\infty}$ the totality of rational number in $[0,T)$, by $\{v^{k}\}_{k=1}^{\infty}$ a dense subset of $V$, and by $\{A_{ij}\}_{j=1}^{\infty}$ the countable subfamily of $\mf_{t_i}$, $i\in \mn$ such that for any $A\in \mf_{t_i}$, there exists $\{A_{ij_{n}}\}_{n=1}^{\infty}\subset \{A_{ij}\}_{j=1}^{\infty}$ such that
$\lim_{n\to \infty} P(A \Delta A_{ij_{n}})=0$,
where $A \Delta A_{ij_{n}}=(A\setminus A_{ij_{n}})\cup (A_{ij_{n}}\setminus A)$.

For any fixed $t_{i}$, $v^{k}$ and $A_{ij}\in \mf_{t_i}$, let $\t\in [t_{i},T)$,  $\ephs\in(0,T-\t)$, $E_{\varepsilon}=[\t,\t+\varepsilon)$, and write
$
u^{k}_{ij}(\omega,t)=\left\{
\begin{array}{l}
\bar{u}(\omega,t), \;\; (\omega,t)\in (\Omega\times [0,T])\setminus(A_{ij}\times [t_{i},T]),\\
v^{k}, \qquad\;\, (\omega,t)\in A_{ij}\times [t_{i},T].\\
\end{array}\right.
$
Clearly, $u^{k}_{ij}(\cdot)\in\mmu_{ad}$. Put
$
\hat{u}^{\varepsilon}(t)=\left\{
\begin{array}{l}
u^{k}_{ij}(t), \;\ t\in E_{\varepsilon},\\
\bar{u}(t), \quad \; t\in  [0,T] \setminus  E_{\varepsilon}.\\
\end{array}\right.
$
By Proposition \ref{variational formulation for noncov} and using the condition (\ref{singularcont concept}), we have
\begin{eqnarray}\label{step0in th mallivain}
0&\ge&\frac{J(\bar{u}(\cdot))-J(\hat{u}^{\varepsilon}(\cdot))}{\varepsilon^2}\nonumber\\
&=&\frac{1}{\varepsilon^2}\me\int_{0}^{T}\Big[
\mh(t,\bar{x}(t),v^{k})+\inner{\mss(t,\bar{x}(t),v^{k})}{
\hat{y}_{1}^{\varepsilon}(t)+\hat{y}_{2}^{\varepsilon}(t)}\nonumber\\
& &\qquad\qquad\quad
+\frac{1}{2}\inner{\mt(t,\bar{x}(t),v^{k})\hat{y}_{1}^{\varepsilon}(t)}
{\hat{y}_{1}^{\varepsilon}(t)}
\Big]\chi_{A_{ij}}\chi_{E_{\varepsilon}}(t)dt+o(1)\quad (\ephs\to 0^+)\nonumber\\
&=&\frac{1}{\varepsilon^2}\me\int_{0}^{T}\Big[
\inner{\mss(t,\bar{x}(t),v^{k})}{
\hat{y}_{1}^{\varepsilon}(t)+\hat{y}_{2}^{\varepsilon}(t)}\nonumber\\
& &\qquad\qquad\quad
+\frac{1}{2}\inner{\mt(t,\bar{x}(t),v^{k})\hat{y}_{1}^{\varepsilon}(t)}
{\hat{y}_{1}^{\varepsilon}(t)}
\Big]\chi_{A_{ij}}\chi_{E_{\varepsilon}}(t)dt+o(1)\quad (\ephs\to 0^+).
\end{eqnarray}
where $\hat{y}_{1}^{\varepsilon}(\cdot),\ \hat{y}_{2}^{\varepsilon}(\cdot)$ are the solutions to the variational equations (\ref{firstvariequ}) and (\ref{secondvariequ}) with respect to $\hat{u}^{\varepsilon}(\cdot)$, respectively.

We first prove that there exists a sequence $\{\varepsilon_{\ell}\}_{\ell=1}^{\infty}$, $\varepsilon_{\ell}\to 0^+$ as $\ell\to \infty$, and,
\begin{eqnarray}\label{step1in th mallivain}
\quad & &\lim_{\ell\to \infty}\frac{1}{\varepsilon_{\ell}^2}
\me\int_{0}^{T}\Big\langle\mss(t,\bar{x}(t),v^{k}),
\hat{y}_{1}^{\varepsilon_{\ell}}(t)\Big\rangle
\chi_{A_{ij}}\chi_{E_{\varepsilon_{\ell}}}(t)dt\\
&=&\frac{1}{2}\me\Big[\Big\langle\nabla\mss(\t,\bar{x}(\t),v^{k}),
\sigma(\t,\bar{x}(\t),v^{k})
-\sigma(\t,\bar{x}(\t),\bar{u}(\t))\Big\rangle\chi_{A_{ij}}\Big],\quad a.e.\ \t\in [t_{i},T],\nonumber
\end{eqnarray}

By (\ref{y1(t)}), $\hat{y}_{1}^{\varepsilon}(\cdot)$ enjoys the following explicit representation:
\begin{eqnarray}\label{haty1(t)}
\hat{y}_{1}^{\varepsilon}(t)&=&-\Phi(t)\int_{0}^{t}\Phi(s)^{-1}\sigma_{x}(s)
\big(\sigma(s,\bar{x}(s),v^{k})-\sigma(s,\bar{x}(s),\bar{u}(s))\big)\chi_{A_{ij}}
\chi_{E_{\varepsilon}}(s)ds\nonumber\\
& &+\Phi(t)\int_{0}^{t}\Phi(s)^{-1}
\big(\sigma(s,\bar{x}(s),v^{k})-\sigma(s,\bar{x}(s),\bar{u}(s))\big)\chi_{A_{ij}}
\chi_{E_{\varepsilon}}(s)dW(s).
\end{eqnarray}
Then,
\begin{eqnarray}\label{th pointwise mallivain eq1}
& &\frac{1}{\varepsilon^2}\me\int_{0}^{T}\Big\langle\mss(t,\bar{x}(t),v^{k}),
\hat{y}_{1}^{\varepsilon}(t)\Big\rangle\chi_{A_{ij}}\chi_{E_{\varepsilon}}(t)dt\nonumber \\
&=&-\frac{1}{\varepsilon^2}\me\int_{\t}^{\t+\varepsilon}
\Big\langle\mss(t,\bar{x}(t),v^{k}),
\Phi(t)\int_{\t}^{t}\Phi(s)^{-1}
\sigma_{x}(s)\nonumber\\
& &\qquad\qquad\qquad \qquad\qquad
\cdot\big(\sigma(s,\bar{x}(s),v^{k})
-\sigma(s,\bar{x}(s),\bar{u}(s))\big)\chi_{A_{ij}}ds\Big\rangle \chi_{A_{ij}}dt\nonumber\\
& &+\frac{1}{\varepsilon^2}\me\int_{\t}^{\t+\varepsilon}
\Big\langle\mss(t,\bar{x}(t),v^{k}),
\Phi(t)\int_{\t}^{t}\Phi(s)^{-1}\nonumber\\
& &\qquad\qquad\qquad \qquad\
\cdot\big(\sigma(s,\bar{x}(s),v^{k})
-\sigma(s,\bar{x}(s),\bar{u}(s))\big)\chi_{A_{ij}}dW(s)\Big\rangle\chi_{A_{ij}} dt.
\end{eqnarray}
By Lemma \ref{technical lemma}, we obtain that for a.e. $\t\in [t_{i},T]$
\begin{eqnarray}\label{th pointwise mallivain eq2}
& &\lim_{\varepsilon\to 0^+}\Big[-\frac{1}{\varepsilon^2}\me\int_{\t}^{\t+\varepsilon}
\Big\langle\mss(t,\bar{x}(t),v),
 \Phi(t)\int_{\t}^{t}\Phi(s)^{-1}
\sigma_{x}(s)\nonumber\\
& &\qquad\qquad\quad\qquad\qquad\quad
\cdot\big(\sigma(s,\bar{x}(s),v^{k})
-\sigma(s,\bar{x}(s),\bar{u}(s))\big)\chi_{A_{ij}}ds
\Big\rangle\chi_{A_{ij}}dt\Big]\nonumber\\
&=&-\frac{1}{2}\me~\Big[\Big\langle\mss(\t,\bar{x}(\t),v^{k}),
\sigma_{x}(\t)\big(\sigma(\t,\bar{x}(\t),v^{k})
-\sigma(\t,\bar{x}(\t),\bar{u}(\t))\big)
\Big\rangle\chi_{A_{ij}}\Big].\nonumber\\
\end{eqnarray}
On the other hand, by (\ref{Phi}), we deduce that
\begin{eqnarray}\label{th pointwise mallivain eq3}
& &\lim_{\varepsilon\to 0^+}\frac{1}{\varepsilon^2}\me\int_{\t}^{\t+\varepsilon}
\Big\langle\mss(t,\bar{x}(t),v^{k}),
\Phi(t)\int_{\t}^{t}\Phi(s)^{-1}\nonumber\\
& &\qquad\qquad\qquad\quad\qquad\quad
\cdot\big(\sigma(s,\bar{x}(s),v^{k})-\sigma(s,\bar{x}(s),\bar{u}(s))\big)\chi_{A_{ij}}
dW(s)\Big\rangle\chi_{A_{ij}}  dt\nonumber\\
&=&\lim_{\varepsilon\to 0^+}\frac{1}{\varepsilon^2}\me\int_{\t}^{\t+\varepsilon}
\Big\langle\mss(t,\bar{x}(t),v^{k}),
\Phi(\t)\int_{\t}^{t}\Phi(s)^{-1}\nonumber\\
& &\qquad\qquad\qquad\quad\qquad\quad
\cdot
\big(\sigma(s,\bar{x}(s),v^{k})-\sigma(s,\bar{x}(s),\bar{u}(s))\big)\chi_{A_{ij}}
dW(s)\Big\rangle\chi_{A_{ij}} dt\nonumber\\
& &+\lim_{\varepsilon\to 0^+}\frac{1}{\varepsilon^2}\me\int_{\t}^{\t+\varepsilon}
\Big\langle\mss(t,\bar{x}(t),v^{k}),\int_{\t}^{t}b_{x}(s)\Phi(s)ds
\int_{\t}^{t}\Phi(s)^{-1}\nonumber\\
& &\qquad\qquad\qquad\quad\qquad\quad
\cdot
\big(\sigma(s,\bar{x}(s),v^{k})-\sigma(s,\bar{x}(s),\bar{u}(s))\big)\chi_{A_{ij}}dW(s)
\Big\rangle\chi_{A_{ij}} dt\nonumber\\
& & +\lim_{\varepsilon\to 0^+}\frac{1}{\varepsilon^2}\me\int_{\t}^{\t+\varepsilon}
\Big\langle\mss(t,\bar{x}(t),v^{k}),\int_{\t}^{t}\sigma_{x}(s)\Phi(s)dW(s)
\int_{\t}^{t}\Phi(s)^{-1}\nonumber\\
& &\qquad\qquad\qquad\quad\qquad\quad
\cdot
\big(\sigma(s,\bar{x}(s),v^{k})-\sigma(s,\bar{x}(s),\bar{u}(s))\big)\chi_{A_{ij}}dW(s)
\Big\rangle \chi_{A_{ij}}dt.\nonumber\\
\end{eqnarray}

Similar to the proof of (\ref{th41 eq3add})--(\ref{th41 eq4}), we obtain that, for a.e. $\t\in[t_{i},T]$,
\begin{eqnarray}\label{th pointwise mallivain eq4}
& &\lim_{\varepsilon\to 0^+}\frac{1}{\varepsilon^2}\me\int_{\t}^{\t+\varepsilon}
\Big\langle\mss(t,\bar{x}(t),v^{k}),\int_{\t}^{t}b_{x}(s)\Phi(s)ds
\int_{\t}^{t}\Phi(s)^{-1}\\
& &\qquad\qquad\qquad\quad\qquad\quad
\cdot\big(\sigma(s,\bar{x}(s),v^{k})-\sigma(s,\bar{x}(s),\bar{u}(s))\big)\chi_{A_{ij}}dW(s)
\Big\rangle\chi_{A_{ij}} dt\nonumber\\
&=&0,\nonumber
\end{eqnarray}
and
\begin{eqnarray}\label{th pointwise mallivain eq5}
& &\lim_{\varepsilon\to 0^+}\frac{1}{\varepsilon^2}\me\int_{\t}^{\t+\varepsilon}
\Big\langle\mss(t,\bar{x}(t),v^{k}),\int_{\t}^{t}\sigma_{x}(s)\Phi(s)dW(s)
\int_{\t}^{t}\Phi(s)^{-1}\nonumber\\
& &\qquad\qquad\qquad\quad\qquad\
\cdot\big(\sigma(s,\bar{x}(s),v^{k})-\sigma(s,\bar{x}(s),\bar{u}(s))\big)\chi_{A_{ij}}dW(s)
\Big\rangle\chi_{A_{ij}} dt\nonumber\\
&=&\frac{1}{2}\me~\Big[\Big\langle\mss(\t,\bar{x}(\t),v^{k}),
\sigma_{x}(\t)\big(\sigma(\t,\bar{x}(\t),v^{k})-\sigma(\t,\bar{x}(\t),\bar{u}(\t))\big)
\Big\rangle\chi_{A_{ij}}\Big].\nonumber\\
\end{eqnarray}
Then, by (\ref{th pointwise mallivain eq1})--(\ref{th pointwise mallivain eq5}), in order to prove (\ref{step1in th mallivain}), it remains to show that there exists a sequence $\{\varepsilon_{\ell}\}_{\ell=1}^{\infty} $, $\varepsilon_{\ell}\to 0^+$ as $\ell\to \infty$ such that
\begin{eqnarray}\label{th pointwise mallivain eq6}
& &\lim_{\ell\to \infty}\frac{1}{\varepsilon_{\ell}^2}\me\int_{\t}^{\t+\varepsilon_{\ell}}
\Big\langle\mss(t,\bar{x}(t),v^{k}),
\Phi(\t)\int_{\t}^{t}\Phi(s)^{-1}\nonumber\\
& &\qquad\qquad\qquad\quad\qquad\quad
\cdot\big(\sigma(s,\bar{x}(s),v^{k})-\sigma(s,\bar{x}(s),\bar{u}(s))\big)\chi_{A_{ij}}
dW(s)\Big\rangle\chi_{A_{ij}} dt\nonumber\\
&=&\frac{1}{2}\me~\Big[\Big\langle\nabla\mss(\t,\bar{x}(\t),v^{k}),
\sigma(\t,\bar{x}(\t),v^{k})-\sigma(\t,\bar{x}(\t),\bar{u}(\t))\Big\rangle\chi_{A_{ij}}\Big],
\quad \ a.e.\ \t\in [t_{i},T].\nonumber\\
\end{eqnarray}
By the regularity  assumption (C3) and the Clark-Ocone representation formula, we have that

\begin{eqnarray}\label{th pointwise mallivain eq7}
& &\frac{1}{\varepsilon^2}\me\int_{\t}^{\t+\varepsilon}
\Big\langle\mss(t,\bar{x}(t),v^{k}),
\Phi(\t)\int_{\t}^{t}\Phi(s)^{-1}\nonumber\\
& &\qquad\qquad\qquad\quad\qquad\quad
\cdot\big(\sigma(s,\bar{x}(s),v^{k})-\sigma(s,\bar{x}(s),\bar{u}(s))\big)\chi_{A_{ij}}
dW(s)\Big\rangle\chi_{A_{ij}} dt\nonumber\\
&=&\frac{1}{\varepsilon^2}\me\int_{\t}^{\t+\varepsilon}
\Big\langle\me~\mss(t,\bar{x}(t),v^{k}),
\Phi(\t)\int_{\t}^{t}\Phi(s)^{-1}\nonumber\\
& &\qquad\qquad\qquad\quad\qquad\quad
\cdot
\big(\sigma(s,\bar{x}(s),v^{k})-\sigma(s,\bar{x}(s),\bar{u}(s))\big)\chi_{A_{ij}}
dW(s)\Big\rangle\chi_{A_{ij}} dt\nonumber\\
& &+\frac{1}{\varepsilon^2}\me\int_{\t}^{\t+\varepsilon}
\Big\langle\int_{0}^{t}\me\Big[\dd_{s}\mss(t,\bar{x}(t),v^{k})\ \Big|
\ \mf_{s}\Big]dW(s),
\Phi(\t)\int_{\t}^{t}\Phi(s)^{-1}\nonumber\\
& &\qquad\qquad\qquad\quad\qquad\quad
\cdot
\big(\sigma(s,\bar{x}(s),v^{k})-\sigma(s,\bar{x}(s),\bar{u}(s))\big)\chi_{A_{ij}}
dW(s)\Big\rangle\chi_{A_{ij}} dt\nonumber\\
&=&\frac{1}{\varepsilon^2}\me\int_{\t}^{\t+\varepsilon}\int_{\t}^{t}
\Big\langle \dd_{s}\mss(t,\bar{x}(t),v^{k}),\nonumber\\
& &\qquad\qquad\qquad\quad
\Phi(\t)\Phi(s)^{-1}
\big(\sigma(s,\bar{x}(s),v^{k})-\sigma(s,\bar{x}(s),\bar{u}(s))\big)\chi_{A_{ij}}
\Big\rangle \chi_{A_{ij}}ds dt\nonumber\\
&=&\frac{1}{\varepsilon^2}\me\int_{\t}^{\t+\varepsilon}\int_{\t}^{t}
\Big\langle \dd_{s}\mss(t,\bar{x}(t),v^{k})-\nabla\mss(s,\bar{x}(s),v^{k}),\nonumber\\
& &\qquad\qquad\qquad\quad
\Phi(\t)\Phi(s)^{-1}
\big(\sigma(s,\bar{x}(s),v^{k})-\sigma(s,\bar{x}(s),\bar{u}(s))\big)\chi_{A_{ij}}
\Big\rangle\chi_{A_{ij}} ds dt\nonumber\\
& &+\frac{1}{\varepsilon^2}\me\int_{\t}^{\t+\varepsilon}\int_{\t}^{t}
\Big\langle\nabla\mss(s,\bar{x}(s),v^{k}),\nonumber\\
& &\qquad\qquad\qquad\quad
\Phi(\t)\Phi(s)^{-1}
\big(\sigma(s,\bar{x}(s),v^{k})-\sigma(s,\bar{x}(s),\bar{u}(s))\big)\chi_{A_{ij}}
\Big\rangle\chi_{A_{ij}} ds dt.
\end{eqnarray}
By the assumptions (C1)--(C3) and \cite[Lemma 2.1]{zhangH14a},  there exists a sequence $\{\varepsilon_{\ell}\}_{\ell=1}^{\infty} $, $\varepsilon_{\ell}\to 0^+$ as $\ell\to \infty$ such that
\begin{eqnarray}\label{th pointwise mallivain eq8}
& &\lim_{\ell\to \infty}\frac{1}{\varepsilon_{\ell}^2}\Big|\me\int_{\t}^{\t+\varepsilon_{\ell}}\int_{\t}^{t}
\Big\langle \dd_{s}\mss(t,\bar{x}(t),v^{k})-\nabla\mss(s,\bar{x}(s),v^{k}),\nonumber\\
& &\qquad\qquad\qquad
\Phi(\t)\Phi(s)^{-1}
\big(\sigma(s,\bar{x}(s),v^{k})-\sigma(s,\bar{x}(s),\bar{u}(s))\big)\chi_{A_{ij}}
\Big\rangle \chi_{A_{ij}}ds dt\Big|\nonumber\\
&\le&\lim_{\ell\to \infty}\frac{C}{\varepsilon_{\ell}}
\Big[\me\Big(\sup_{s\in[\t,T]}
\big|\Phi(\t)\Phi(s)^{-1}\big|^{2}\Big)\Big]^{\frac{1}{2}}\nonumber\\
& &\qquad\qquad\qquad\qquad\
\cdot
\Big[\me\int_{\t}^{\t+\varepsilon_{\ell}}\int_{\t}^{t}
\Big| \dd_{s}\mss(t,\bar{x}(t),v^{k})-\nabla\mss(s,\bar{x}(s),v^{k})\Big|^{2} ds dt\Big]^{\frac{1}{2}}\nonumber\\
&=&0, \quad  a.e.\  \t\in[t_{i},T].\nonumber\\
\end{eqnarray}
On the other hand, by Lemma \ref{technical lemma},
\begin{eqnarray}\label{th pointwise mallivain eq9}
& &\lim_{\varepsilon\to 0^+}\frac{1}{\varepsilon^2}\me\int_{\t}^{\t+\varepsilon}\int_{\t}^{t}
\Big\langle\nabla\mss(s,\bar{x}(s),v^{k}),\nonumber\\
& &\qquad\qquad\qquad\qquad
\Phi(\t)\Phi(s)^{-1}
\big(\sigma(s,\bar{x}(s),v^{k})-\sigma(s,\bar{x}(s),\bar{u}(s))\big)\chi_{A_{ij}}
\Big\rangle \chi_{A_{ij}}ds dt\nonumber\\
&=&\frac{1}{2}\me~\Big[\Big\langle\nabla\mss(\t,\bar{x}(\t),v^{k}),
\sigma(\t,\bar{x}(\t),v^{k})-\sigma(\t,\bar{x}(\t),\bar{u}(\t))\Big\rangle\chi_{A_{ij}}\Big],
\quad \ a.e.\ \t\in [t_{i},T].\nonumber\\
\end{eqnarray}
Combining (\ref{th pointwise mallivain eq7}), (\ref{th pointwise mallivain eq8}) with (\ref{th pointwise mallivain eq9}), we obtain (\ref{th pointwise mallivain eq6}). By (\ref{th pointwise mallivain eq1})--(\ref{th pointwise mallivain eq6}), we obtain (\ref{step1in th mallivain}).

Next, similar to Steps 2 and 3 in the proof of Theorem \ref{2orderconditionth}, we obtain that

\begin{eqnarray}\label{step2in th mallivain}
& &\lim_{\varepsilon\to 0^+}\frac{1}{\varepsilon^2}
\me\int_{0}^{T}\Big\langle\mss(t,\bar{x}(t),v^{k}),
\hat{y}_{2}^{\varepsilon}(t)\Big\rangle\chi_{E_{\varepsilon}}(t)dt\\
&=& \frac{1}{2}\me ~\Big[\Big\langle\mss(\t,\bar{x}(\t),v^{k}), b(\t,\bar{x}(\t),v^{k})-b(\t,\bar{x}(\t),\bar{u}(\t))\Big\rangle\chi_{A_{ij}}\Big],  \ \ \ a.e.\ \ \t\in [t_{i},T].\nonumber
\end{eqnarray}
and
\begin{eqnarray}\label{step3in th mallivain}
& &\lim_{\varepsilon\to 0^+}\frac{1}{\varepsilon^2}
\me\int_{0}^{T}\Big\langle\mt(t,\bar{x}(t),v)\hat{y}_{1}^{\varepsilon}(t),
\hat{y}_{1}^{\varepsilon}(t)\Big\rangle
\chi_{E_{\varepsilon}}(t)dt\\
&=&\frac{1}{2}
\me ~\Big[\Big\langle\mt(\t,\bar{x}(\t),v^{k})
\big(\sigma(\t,\bar{x}(\t),v^{k})-\sigma(\t,\bar{x}(\t),\bar{u}(\t))\big),\nonumber\\
& &\qquad\qquad\qquad\qquad\quad
\sigma(\t,\bar{x}(\t),v^{k})-\sigma(\t,\bar{x}(\t),\bar{u}(\t))
\Big\rangle\chi_{A_{ij}}\Big],
\  \ a.e. \ \t\in [t_{i},T].\nonumber
\end{eqnarray}

Finally, combining (\ref{step0in th mallivain}), (\ref{step1in th mallivain}),  (\ref{step2in th mallivain}) and (\ref{step3in th mallivain}), we end up with
\begin{eqnarray}\label{step4in th mallivain}
& &\me ~\Big[\inner{\mss(\t,\bar{x}(\t), v^{k})}{
b(\t,\bar{x}(\t),v^{k})-b(\t,\bar{x}(\t),\bar{u}(\t))}\chi_{A_{ij}}\Big]\nonumber\\
& &\quad+ \me ~\Big[\inner{\nabla\mss(\t,\bar{x}(\t),v^{k})}{
\sigma(\t,\bar{x}(\t),v^{k})-\sigma(\t,\bar{x}(\t),\bar{u}(\t))}\chi_{A_{ij}}\Big]\nonumber\\
& &
\quad+\frac{1}{2}\me ~\Big[\big\langle\mt(\t,\bar{x}(\t),v^{k})
\big(\sigma(\t,\bar{x}(\t),v^{k})-\sigma(\t,\bar{x}(\t),\bar{u}(\t))\big),\nonumber\\
& &\quad\qquad\qquad\qquad\qquad\qquad
\sigma(\t,\bar{x}(\t),v^{k})-\sigma(\t,\bar{x}(\t),\bar{u}(\t))
\big\rangle\chi_{A_{ij}}\Big]\le 0.
\end{eqnarray}
By the arbitrariness of $i,\ j,\ k$, the  construction of  $\{A_{ij}\}_{i=1}^{\infty}$, the continuities of the filter $\mmf$ and the map $v\mapsto \nabla\mss(\t,\bar{x}(\t),v)$,  and the density of $\{v^{k}\}_{k=1}^{\infty}$, we conclude that the desired necessary condition (\ref{2ordercondition nonconvex corollary}) holds. This completes the proof of Theorem \ref{Th 2ordercondition nonconvex malliavin}.

\appendix

\section{Proof of Lemma \ref{estimateofvariequ}}
To simplify the notation, we only prove the $1$-dimensional case (The high dimensional case can be proved in the same way).

The proof is long and requires heavy computations (The main idea comes from the proof of \cite[Theorem 4.4, p. 128]{Yong99}). We will divide it into 4 steps

\textbf{Step 1:} Estimation of $\|y_{1}^{\varepsilon}\|_{\infty,\beta}^{\beta}$,
$\|y_{2}^{\varepsilon}\|_{\infty,\beta}^{\beta}$, $\|y_{3}^{\varepsilon}\|_{\infty,\beta}^{\beta}$ and ~$\|y_{4}^{\varepsilon}\|_{\infty,\beta}^{\beta}$.

By the conditions (C1)--(C2) and the estimate (\ref{estimateofx}), we have
\begin{equation}\label{estlemma y1}
\me~\Big[\sup_{t\in[0,T]}|y_{1}^{\varepsilon}(t)|^{\beta}\Big]
\le C\me~\Big[\int_{0}^{T}\big|\delta \sigma(t)\chi_{E_{\varepsilon}}(t)\big|^2dt\Big]^{\frac{\beta}{2}}
\le C \varepsilon^{\frac{\beta}{2}}.
\end{equation}

In a similar way, we have
\begin{eqnarray}\label{estlemma y2}
\me~\Big[\sup_{t\in[0,T]}|y_{2}^{\varepsilon}(t)|^{\beta}\Big]
&\le &C\me~\Big[\int_{0}^{T}\big|\frac{1}{2}b_{xx}(t)y_{1}^{\varepsilon}(t)^{2}
+\delta b(t)\chi_{E_{\varepsilon}}(t)\big|dt\Big]^{\beta}\nonumber\\
& &+C\me~\Big[\int_{0}^{T}\big|\frac{1}{2}\sigma_{xx}(t)y_{1}^{\varepsilon}(t)^{2}
+\delta\sigma_{x}(t)y_{1}^{\varepsilon}(t)\chi_{E_{\varepsilon}}(t)
\big|^{2}dt\Big]^{\frac{\beta}{2}}\nonumber\\
&\le& C\Big\{\me~\Big[\sup_{t\in[0,T]}|y_{1}^{\varepsilon}(t)|^{2\beta}\Big]
+\varepsilon^{\beta}\nonumber\\
& &\ \quad+\me~\Big[\sup_{t\in[0,T]}|y_{1}^{\varepsilon}(t)|^{2\beta}\Big]
+\me~\Big[\sup_{t\in[0,T]}|y_{1}^{\varepsilon}(t)|^{\beta}\Big]\varepsilon^{\frac{\beta}{2}}
\Big\}
\le C \varepsilon^{\beta},
\end{eqnarray}
\begin{eqnarray}\label{estlemma y3}
& &\me~\Big[\sup_{t\in[0,T]}|y_{3}^{\varepsilon}(t)|^{\beta}\Big]\nonumber\\
&\le &C\me~\Big[\int_{0}^{T}\big|\frac{1}{2}b_{xx}(t)\big(2 y_{1}^{\varepsilon}(t)
 y_{2}^{\varepsilon}(t)+ y_{2}^{\varepsilon}(t)^{2}\big)+\frac{1}{6}b_{xxx}(t) y_{1}^{\varepsilon}(t)^{3}\nonumber\\
& &\ \ +\delta b_{x}(t) y_{1}^{\varepsilon}(t) \chi_{E_{\varepsilon}}(t)\big|dt\Big]^{\beta}
+C\me~\Big[\int_{0}^{T}\big|\frac{1}{2}\sigma_{xx}(t)\big(2 y_{1}^{\varepsilon}(t)
 y_{2}^{\varepsilon}(t)+ y_{2}^{\varepsilon}(t)^{2}\big)\nonumber\\
& &\ \ +\frac{1}{6}\sigma_{xxx}(t) y_{1}^{\varepsilon}(t)^{3}
+\delta \sigma_{x}(t) y_{2}^{\varepsilon}(t) \chi_{E_{\varepsilon}}(t) +\frac{1}{2}\delta\sigma_{xx}(t)y_{1}^{\varepsilon}(t)^{2}
\chi_{E_{\varepsilon}}(t)\big|^{2}dt\Big]^{\frac{\beta}{2}}\nonumber\\
&\le& C\Big\{\me~\Big[\sup_{t\in[0,T]}|y_{1}^{\varepsilon}(t)|^{\beta}
 |y_{2}^{\varepsilon}(t)|^{\beta}\Big]
+\me~\Big[\sup_{t\in[0,T]}|y_{2}^{\varepsilon}(t)|^{2\beta}\Big]\nonumber\\
& &\qquad+\me~\Big[\sup_{t\in[0,T]}|y_{1}^{\varepsilon}(t)|^{3\beta}\Big]
+\me~\Big[\sup_{t\in[0,T]}
|y_{1}^{\varepsilon}(t)|^{\beta}\Big]\varepsilon^{\beta}\nonumber\\
& &\qquad+\me~\Big[\sup_{t\in[0,T]}|y_{2}^{\varepsilon}(t)|^{\beta}\Big]
\varepsilon^{\frac{\beta}{2}}+
\me~\Big[\sup_{t\in[0,T]}|y_{1}^{\varepsilon}(t)|^{2\beta}\Big]\varepsilon^{\frac{\beta}{2}}
\Big\}
\le C \varepsilon^{\frac{3\beta}{2}},
\end{eqnarray}
and
\begin{eqnarray}\label{estlemma y4}
& &\me~\Big[\sup_{t\in[0,T]}|y_{4}^{\varepsilon}(t)|^{\beta}\Big]\nonumber\\
&\le &C\me~\Big[\int_{0}^{T}\big|\frac{1}{2}b_{xx}(t)\big(2 y_{1}^{\varepsilon}(t)
y_{3}^{\varepsilon}(t)+ 2 y_{2}^{\varepsilon}(t) y_{3}^{\varepsilon}(t)+ y_{3}^{\varepsilon}(t)^{2}\big)\nonumber\\
& &+\frac{1}{6}b_{xxx}(t) \big(3y_{1}^{\varepsilon}(t)^{2}
y_{2}^{\varepsilon}(t)+3y_{1}^{\varepsilon}(t) y_{2}^{\varepsilon}(t)^{2}
+y_{2}^{\varepsilon}(t)^{3}\big)\nonumber\\
& &+\frac{1}{24}b_{xxxx}(t) y_{1}^{\varepsilon}(t)^{4}
+\delta b_{x}(t) y_{2}^{\varepsilon}(t) \chi_{E_{\varepsilon}}(t)+\frac{1}{2}\delta b_{xx}(t) y_{1}^{\varepsilon}(t)^{2}\chi_{E_{\varepsilon}}(t)\big|dt\Big]^{\beta}\nonumber\\
& &+C\me~\Big[\int_{0}^{T}\big|\frac{1}{2}\sigma_{xx}(t)
\big(2 y_{1}^{\varepsilon}(t) y_{3}^{\varepsilon}(t)+ 2 y_{2}^{\varepsilon}(t) y_{3}^{\varepsilon}(t)+ y_{3}^{\varepsilon}(t)^{2}\big)\nonumber\\
& &+\frac{1}{6}\sigma_{xxx}(t) \big(3y_{1}^{\varepsilon}(t)^{2}
y_{2}^{\varepsilon}(t)+3y_{1}^{\varepsilon}(t) y_{2}^{\varepsilon}(t)^{2}+y_{2}^{\varepsilon}(t)^{3}\big)\nonumber\\
& &+\frac{1}{24}\sigma_{xxxx}(t) y_{1}^{\varepsilon}(t)^{4}
+\delta \sigma_{x}(t) y_{3}^{\varepsilon}(t) \chi_{E_{\varepsilon}}(t)\nonumber\\
& & +\frac{1}{2}\delta\sigma_{xx}(t)\big(2 y_{1}^{\varepsilon}(t) y_{2}^{\varepsilon}(t)
+ y_{2}^{\varepsilon}(t)^{2}\big)\chi_{E_{\varepsilon}}(t)
+\frac{1}{6}\delta\sigma_{xxx}(t)
y_{1}^{\varepsilon}(t)^{3}\chi_{E_{\varepsilon}}(t)\big|^{2}dt\Big]^{\frac{\beta}{2}}
\nonumber\\[+0.6em]
&\le& C\Big\{\me~\Big[\sup_{t\in[0,T]}|y_{1}^{\varepsilon}(t)|^{\beta}
|y_{3}^{\varepsilon}(t)|^{\beta}\Big]
+\me~\Big[\sup_{t\in[0,T]}|y_{2}^{\varepsilon}(t)|^{\beta} |y_{3}^{\varepsilon}(t)|^{\beta}\Big]\nonumber\\[+0.3em]
& &+ \me~\Big[\sup_{t\in[0,T]}|y_{3}^{\varepsilon}(t)|^{2\beta}\Big]
+\me~\Big[\sup_{t\in[0,T]}|y_{1}^{\varepsilon}(t)|^{2\beta}
|y_{2}^{\varepsilon}(t)|^{\beta}\Big]\nonumber\\[+0.3em]
& &+\me~\Big[\sup_{t\in[0,T]}|y_{1}^{\varepsilon}(t)|^{\beta}
|y_{2}^{\varepsilon}(t)|^{2\beta}\Big]
+\me~\Big[\sup_{t\in[0,T]}|y_{2}^{\varepsilon}(t)|^{3\beta}\Big]\nonumber\\[+0.3em]
& &+\me~\Big[\sup_{t\in[0,T]}|y_{1}^{\varepsilon}(t)|^{4\beta}\Big]
+\me~\Big[\sup_{t\in[0,T]}
|y_{2}^{\varepsilon}(t)|^{\beta}\Big]\varepsilon^{\beta}\nonumber\\[+0.3em]
& &+\me~\Big[\sup_{t\in[0,T]}|y_{1}^{\varepsilon}(t)|^{2\beta}\Big]
\varepsilon^{\beta}
+\me~\Big[\sup_{t\in[0,T]}|y_{3}^{\varepsilon}(t)|^{\beta}
\Big]\varepsilon^{\frac{\beta}{2}}\nonumber\\[+0.3em]
& &+\me~\Big[\sup_{t\in[0,T]}|y_{1}^{\varepsilon}(t)|^{\beta}
|y_{2}^{\varepsilon}(t)|^{\beta}\Big]\varepsilon^{\frac{\beta}{2}}
+\me~\Big[\sup_{t\in[0,T]}|y_{2}^{\varepsilon}(t)|^{2\beta}
\Big]\varepsilon^{\frac{\beta}{2}}\nonumber\\[+0.3em]
& &+\me~\Big[\sup_{t\in[0,T]}|y_{1}^{\varepsilon}(t)|^{3\beta}
\Big]\varepsilon^{\frac{\beta}{2}}\Big\}
\le C \varepsilon^{2\beta}
\end{eqnarray}

\textbf{Step 2:} Estimation of $\|r_{1}\|_{\infty,\beta}^{\beta}$, $\|r_{2}\|_{\infty,\beta}^{\beta}$ and $\|r_{3}\|_{\infty,\beta}^{\beta}$.

By (\ref{continuousofxwithx0andu}) and the condition (C1)--(C2), we have
\begin{eqnarray}\label{estimateofdeltax}
\me~\Big[\sup_{t\in[0,T]}|\delta x(t)|^{\beta}\Big]
&\le& C\me~\Big[\int_{0}^{T}\big|\delta b(t)\chi_{E_{\varepsilon}}(t)\big|dt\Big]^{\beta}
+C\me~\Big[\int_{0}^{T}\big|\delta\sigma(t)\chi_{E_{\varepsilon}}(t)\big|^{2}dt\Big]
^{\frac{\beta}{2}}\nonumber\\
&\le & C\varepsilon^{\beta}+C\varepsilon^{\frac{\beta}{2}}
\le C\varepsilon^{\frac{\beta}{2}}.
\end{eqnarray}

Define
\begin{equation}\label{coffofdeltax}
\left\{
\begin{array}{l}
\tilde{b}_{x}^{\varepsilon}(t):=\int_{0}^{1}b_{x}(t,\theta\bar{x}(t)+
(1-\theta)x^{\varepsilon}(t),u^{\varepsilon}(t))d\theta,\\[+0.6em]
\tilde{\sigma}_{x}^{\varepsilon}(t):=\int_{0}^{1}\sigma_{x}(t,\theta\bar{x}(t)+
(1-\theta)x^{\varepsilon}(t),u^{\varepsilon}(t))d\theta.
\end{array}\right.
\end{equation}
Then, $\delta x(\cdot)\!=\!x^{\varepsilon}(\cdot)-\bar{x}(\cdot)$ is the solution to the following stochastic differential equation:
\begin{equation}\nonumber
\left\{
\begin{array}{l}
d\delta x(t)= \Big[\tilde{b}_{x}^{\varepsilon}(t)\delta x(t)
+\delta b(t)\chi_{E_{\varepsilon}}(t)\Big]dt\\[+0.7em]
\qquad\qquad+\Big[\tilde{\sigma}_{x}^{\varepsilon}(t)\delta x(t)
+\delta \sigma(t)\chi_{E_{\varepsilon}}(t)\Big]dW(t),\ \  t\in[0,T],\\[+0.7em]
\delta x(0)=0.
\end{array}\right.
\end{equation}
Also, $r_{1}(\cdot)=\delta x(\cdot)-y_{1}^{\varepsilon}(\cdot)$ is the solution to the following stochastic differential equation:
\begin{equation}\nonumber
\qquad\;\;\left\{
\begin{array}{l}
dr_{1}(t)= \Big[\tilde{b}_{x}^{\varepsilon}(t)r_{1}(t)
+\delta b(t)\chi_{E_{\varepsilon}}(t)
+ \big(\tilde{b}_{x}^{\varepsilon}(t)-b_{x}(t)\big)y_{1}^{\varepsilon}(t)\Big]dt\\[+0.7em]
\qquad \qquad\qquad\qquad +\Big[\tilde{\sigma}_{x}^{\varepsilon}(t)r_{1}(t)
+ \big(\tilde{\sigma}_{x}^{\varepsilon}(t)
-\sigma_{x}(t)\big)y_{1}^{\varepsilon}(t)\Big]dW(t),\ \ t\in[0,T].\\[+0.7em]
r_{1}(0)=0.
\end{array}\right.
\end{equation}

Since
\begin{eqnarray*}
& &\me~\Big[\int_{0}^{T}|\tilde{b}_{x}^{\varepsilon}(t)-b_{x}(t)|dt\Big]^{\beta}\\
&=&\me~\Big[\int_{0}^{T}\Big|\int_{0}^{1}b_{x}(t,\theta\bar{x}(t)+
(1-\theta)x^{\varepsilon}(t),u^{\varepsilon}(t))
-b_{x}(t)d\theta\Big|dt\Big]^{\beta}\\
&\le&\me\Big[\int_{0}^{T}\Big(L\big|x^{\varepsilon}(t)-\bar{x}(t)\big|+\big|\delta b_{x}(t)\chi_{E_{\varepsilon}}(t)\big|\Big)dt\Big]^{\beta}\\
&\le&C\me~\Big[\sup_{t\in[0,T]}|\delta x(t)|^{\beta}\Big]+C\varepsilon^{\beta}
\le C\varepsilon^{\frac{\beta}{2}},
\end{eqnarray*}
and
\begin{eqnarray*}
& &\me~\Big[\int_{0}^{T}\big|\tilde{\sigma}_{x}^{\varepsilon}(t)
-\sigma_{x}(t)\big|^{2}dt\Big]^{\frac{\beta}{2}}\\
&=&\me~\Big[\int_{0}^{T}\Big|\int_{0}^{1}\sigma_{x}(t,\theta\bar{x}(t)+
(1-\theta)x^{\varepsilon}(t),u^{\varepsilon}(t))
-\sigma_{x}(t)d\theta\Big|^{2}dt\Big]^{\frac{\beta}{2}}\\
&\le&C\me~\Big[\int_{0}^{T}\Big(L|x^{\varepsilon}(t)-\bar{x}(t)|^{2}+|\delta \sigma_{x}(t)\chi_{E_{\varepsilon}}(t)|^{2}\Big)dt\Big]^{\frac{\beta}{2}}\\
&\le&C\me~\Big[\sup_{t\in[0,T]}|\delta x(t)|^{\beta}\Big]+C\varepsilon^{\frac{\beta}{2}}
\le C\varepsilon^{\frac{\beta}{2}},
\end{eqnarray*}
we have
\begin{eqnarray}\label{estlemma r1}
\me~\Big[\sup_{t\in[0,T]}|r_{1}(t)|^{\beta}\Big]
&\le &C\me~\Big[\int_{0}^{T}\big|\delta b(t)\chi_{E_{\varepsilon}}(t)+ \big(\tilde{b}_{x}^{\varepsilon}(t)
-b_{x}(t)\big)y_{1}^{\varepsilon}(t)\big|dt\Big]^{\beta}\nonumber\\
& &
+C~\me~\Big[\int_{0}^{T}\big|\big(\tilde{\sigma}_{x}^{\varepsilon}(t)
-\sigma_{x}(t)\big)y_{1}^{\varepsilon}(t)\big|^{2}dt\Big]^{\frac{\beta}{2}}\nonumber\\
&\le&C\Big\{\varepsilon^{\beta}
+\me\Big[\Big(\sup_{t\in[0,T]}|y_{1}^{\varepsilon}(t)|^{\beta}\Big)
\Big(\int_{0}^{T}\big|\tilde{\sigma}_{x}^{\varepsilon}(t)
-\sigma_{x}(t)\big|dt\Big)^{\beta}\Big]\nonumber\\
& &+\me \Big[\Big(\sup_{t\in[0,T]}|y_{1}^{\varepsilon}(t)|^{\beta}\Big)
\Big(\int_{0}^{T}\big|\tilde{b}_{x}^{\varepsilon}(t)
-b_{x}(t)\big|^{2}dt\Big)^{\frac{\beta}{2}}\Big]\Big\}
\le C \varepsilon^{\beta}.
\end{eqnarray}
This gives the estimation for $r_1(\cdot)$.

Next, we prove the estimation for $r_2(\cdot)$.
For $\varphi=b,\sigma$, by Taylor's formula, we have
\begin{eqnarray}
& & \varphi(t,x^{\varepsilon}(t),u^{\varepsilon}(t))
-\varphi(t,\bar{x}(t),\bar{u}(t))\nonumber\\[+0.3em]
&=& \varphi(t,x^{\varepsilon}(t),\bar{u}(t))-\varphi(t,\bar{x}(t),\bar{u}(t))
+\varphi(t,\bar{x}(t),u^{\varepsilon}(t))\nonumber\\[+0.3em]
& &-\varphi(t,\bar{x}(t),\bar{u}(t))
+\varphi(t,x^{\varepsilon}(t),u^{\varepsilon}(t))
-\varphi(t,\bar{x}(t),u^{\varepsilon}(t))\nonumber\\[+0.3em]
& &-\varphi(t,x^{\varepsilon}(t),\bar{u}(t))
+\varphi(t,\bar{x}(t),\bar{u}(t))\nonumber\\[+0.3em]
&=&\varphi_x(t)\delta x(t)+\frac{1}{2}\varphi_{xx}(t)\delta x(t)^2
+\delta \varphi(t)\chi_{E_{\varepsilon}}(t)\nonumber\\[-0.2em]
& & +\frac{1}{2}\int_{0}^{1}\theta^2 \varphi_{xxx}(t,\theta\bar{x}(t)+
(1-\theta)x^{\varepsilon}(t),\bar{u}(t))\delta x(t)^3d\theta\nonumber\\[-0.2em]
& &+\int_{0}^{1} \Big( \varphi_{x}(t,\theta\bar{x}(t)+
(1-\theta)x^{\varepsilon}(t),u^{\varepsilon}(t))\nonumber\\[-0.2em]
& &\qquad\qquad\qquad\qquad
-\varphi_{x}(t,\theta\bar{x}(t)
+(1-\theta)x^{\varepsilon}(t),\bar{u}(t))\Big)\delta x(t) d\theta
\label{expdeltax 1}\\
&=& \varphi_x(t)\delta x(t)+\frac{1}{2}\varphi_{xx}(t)\delta x(t)^2
+\frac{1}{6}\varphi_{xxx}(t)\delta x(t)^3\nonumber\\[+0.3em]
& &+\delta \varphi(t)\chi_{E_{\varepsilon}}(t)
+\delta \varphi_x(t)\delta x(t)\chi_{E_{\varepsilon}}(t)\nonumber\\[+0.3em]
& & +\frac{1}{6}\int_{0}^{1}\theta^3 \varphi_{xxxx}(t,\theta\bar{x}(t)+
(1-\theta)x^{\varepsilon}(t),\bar{u}(t))\delta x(t)^4d\theta\nonumber\\
& &+\int_{0}^{1}\theta \Big( \varphi_{xx}(t,\theta\bar{x}(t)+
(1-\theta)x^{\varepsilon}(t),u^{\varepsilon}(t))\nonumber\\
& & \qquad\qquad\qquad\qquad
-\varphi_{xx}(t,\theta\bar{x}(t)
+(1-\theta)x^{\varepsilon}(t),\bar{u}(t))\Big)\delta x(t)^2 d\theta
\label{expdeltax 2}\\
&=& \varphi_x(t)\delta x(t)+\frac{1}{2}\varphi_{xx}(t)\delta x(t)^2
+\frac{1}{6}\varphi_{xxx}(t)\delta x(t)^3\nonumber\\
& & +\delta \varphi(t)\chi_{E_{\varepsilon}}(t)
+\delta \varphi_x(t)\delta x(t)\chi_{E_{\varepsilon}}(t)
+\frac{1}{2}\delta \varphi_{xx}(t)\delta x(t)^2\chi_{E_{\varepsilon}}(t)\nonumber\\
& & +\frac{1}{6}\int_{0}^{1}\theta^3 \varphi_{xxxx}(t,\theta\bar{x}(t)+
(1-\theta)x^{\varepsilon}(t),\bar{u}(t))\delta x(t)^4d\theta\nonumber\\
& &+\frac{1}{2}\int_{0}^{1}\theta^2 \Big( \varphi_{xxx}(t,\theta\bar{x}(t)+
(1-\theta)x^{\varepsilon}(t),u^{\varepsilon}(t))\nonumber\\
& &\qquad\qquad\qquad\quad
 -\varphi_{xxx}(t,\theta\bar{x}(t)
+(1-\theta)x^{\varepsilon}(t),\bar{u}(t))\Big)\delta x(t)^3 d\theta
\qquad\label{expdeltax 3}\\
&=& \varphi_x(t)\delta x(t)+\frac{1}{2}\varphi_{xx}(t)\delta x(t)^2
+\frac{1}{6}\varphi_{xxx}(t)\delta x(t)^3+\delta \varphi(t)\chi_{E_{\varepsilon}}(t)\nonumber\\
& &
+\delta \varphi_x(t)\delta x(t)\chi_{E_{\varepsilon}}(t)+\frac{1}{2}\delta \varphi_{xx}(t)\delta x(t)^2\chi_{E_{\varepsilon}}(t)
+\frac{1}{6}\delta \varphi_{xxx}(t)
\delta x(t)^3\chi_{E_{\varepsilon}}(t)\nonumber\\
& &+\frac{1}{6}\int_{0}^{1}\theta^3 \varphi_{xxxx}(t,\theta\bar{x}(t)+
(1-\theta)x^{\varepsilon}(t),\bar{u}(t))\delta x(t)^4d\theta\nonumber\\
& &+\frac{1}{6}\int_{0}^{1}\theta^3 \Big( \varphi_{xxxx}(t,\theta\bar{x}(t)+
(1-\theta)x^{\varepsilon}(t),u^{\varepsilon}(t))\nonumber\\
& &\qquad\qquad\qquad\quad-\varphi_{xxxx}(t,\theta\bar{x}(t)
+(1-\theta)x^{\varepsilon}(t),\bar{u}(t))\Big)\delta x(t)^4 d\theta.
\qquad\ \label{expdeltax 4}
\end{eqnarray}

By (\ref{expdeltax 1}) and (\ref{expdeltax 2}), we find that $\delta x(\cdot)$ is the solution to the following differential equation:
\begin{equation}\label{expdeltax equ1}
\left\{
\begin{array}{l}
d\delta x(t)= \Big[b_x(t)\delta x(t)+\frac{1}{2}b_{xx}(t)\delta x(t)^2+\delta b(t)\chi_{E_{\varepsilon}}(t)\\[+0.7em]
\qquad\qquad \ \ +\frac{1}{2}\int_{0}^{1}\theta^2 b_{xxx}(t,\theta\bar{x}(t)+
(1-\theta)x^{\varepsilon}(t),\bar{u}(t))\delta x(t)^3d\theta\\[+0.7em]
\qquad\qquad \ \ +\int_{0}^{1} \big( b_{x}(t,\theta\bar{x}(t)+
(1-\theta)x^{\varepsilon}(t),u^{\varepsilon}(t)) \\[+0.7em]
\qquad\qquad \qquad\qquad \ -b_{x}(t,\theta\bar{x}(t)
+(1-\theta)x^{\varepsilon}(t),\bar{u}(t))\big)\delta x(t) d\theta\Big]dt\\[+0.7em]
\qquad\qquad \ \ +\Big[ \sigma_x(t)\delta x(t)
+\frac{1}{2}\sigma_{xx}(t)\delta x(t)^2\\[+0.7em]
\qquad\qquad \ \
+\delta \sigma(t)\chi_{E_{\varepsilon}}(t)
+\delta \sigma_x(t)\delta x(t)\chi_{E_{\varepsilon}}(t)\\[+0.7em]
\qquad\qquad \ \ +\frac{1}{2}\int_{0}^{1}\theta^2 \sigma_{xxx}(t,\theta\bar{x}(t)+
(1-\theta)x^{\varepsilon}(t),\bar{u}(t))\delta x(t)^3d\theta\\[+0.7em]
\qquad\qquad \ \  +\int_{0}^{1}\theta \big( \sigma_{xx}(t,\theta\bar{x}(t)+
(1-\theta)x^{\varepsilon}(t),u^{\varepsilon}(t))\\[+0.7em]
\qquad\qquad\qquad\quad  -\sigma_{xx}(t,\theta\bar{x}(t)
+(1-\theta)x^{\varepsilon}(t),\bar{u}(t))\big)\delta x(t)^2
d\theta\Big]dW(t),\\[+0.7em]
\qquad\qquad \ \ t\in [0,T],\\[+1.1em]
\delta x(0)=0.
\end{array}\right.
\end{equation}

Similarly, by (\ref{expdeltax 2})--(\ref{expdeltax 4}), $\delta x(\cdot)$ is the solution to the stochastic differential equation
\begin{equation}\label{expdeltax equ2}
\quad\left\{
\begin{array}{l}
d\delta x(t)= \Big[b_x(t)\delta x(t)+\frac{1}{2}b_{xx}(t)\delta x(t)^2
+\frac{1}{6}b_{xxx}(t)\delta x(t)^3\\[+0.7em]
\qquad\qquad \ \
+\delta b(t)\chi_{E_{\varepsilon}}(t)
+\delta b_x(t)\delta x(t)\chi_{E_{\varepsilon}}(t)\\[+0.7em]
\qquad\qquad \ \ +\frac{1}{6}\int_{0}^{1}\theta^3 b_{xxxx}(t,\theta\bar{x}(t)+
(1-\theta)x^{\varepsilon}(t),\bar{u}(t))\delta x(t)^4d\theta\\[+0.7em]
\qquad\qquad \ \ +\int_{0}^{1}\theta\big( b_{xx}(t,\theta\bar{x}(t)+
(1-\theta)x^{\varepsilon}(t),u^{\varepsilon}(t))\\[+0.7em]
\qquad\qquad\qquad\qquad\quad -b_{xx}(t,\theta\bar{x}(t)
+(1-\theta)x^{\varepsilon}(t),\bar{u}(t))\big)
\delta x(t)^2 d\theta\Big]dt\\[+0.7em]
\qquad\qquad \ \ +\Big[ \sigma_x(t)\delta x(t)+\frac{1}{2}\sigma_{xx}(t)\delta x(t)^2
+\frac{1}{6}\sigma_{xxx}(t)\delta x(t)^3\\[+0.7em]
\qquad\qquad \ \
+\delta \sigma(t)\chi_{E_{\varepsilon}}(t)
+\delta \sigma_x(t)\delta x(t)\chi_{E_{\varepsilon}}(t)+\frac{1}{2}\delta \sigma_{xx}(t)\delta x(t)^{2}
\chi_{E_{\varepsilon}}(t)\\[+0.7em]
\qquad\qquad \ \
+\frac{1}{6}\int_{0}^{1}\theta^3 \sigma_{xxxx}(t,\theta\bar{x}(t)+
(1-\theta)x^{\varepsilon}(t),\bar{u}(t))\delta x(t)^4d\theta\\[+0.7em]
\qquad\qquad \ \ +\frac{1}{2}\int_{0}^{1}\theta^{2}
\big( \sigma_{xxx}(t,\theta\bar{x}(t)+
(1-\theta)x^{\varepsilon}(t),u^{\varepsilon}(t))\\[+0.7em]
\qquad\qquad \qquad\quad\ -\sigma_{xxx}(t,\theta\bar{x}(t)
+(1-\theta)x^{\varepsilon}(t),\bar{u}(t))\big)
\delta x(t)^3 d\theta\Big]dW(t),\\[+0.7em]
\qquad\qquad \ \ t\in [0,T],\\[+1.1em]
\delta x(0)=0,
\end{array}\right.
\end{equation}

and the stochastic differential equation
\begin{equation}\label{expdeltax equ3}
\left\{
\begin{array}{l}
d\delta x(t)= \Big[b_x(t)\delta x(t)+\frac{1}{2}b_{xx}(t)\delta x(t)^2
+\frac{1}{6}b_{xxx}(t)\delta x(t)^3\\[+0.6em]
\qquad\qquad \ \
+\delta b(t)\chi_{E_{\varepsilon}}(t)
+\delta b_x(t)\delta x(t)\chi_{E_{\varepsilon}}(t)
+\frac{1}{2}\delta b_{xx}(t)\delta x(t)^{2}\chi_{E_{\varepsilon}}(t)\\[+0.6em]
\qquad\qquad \ \
+\frac{1}{6}\int_{0}^{1}\theta^3 b_{xxxx}(t,\theta\bar{x}(t)+
(1-\theta)x^{\varepsilon}(t),\bar{u}(t))\delta x(t)^4d\theta\\[+0.6em]
\qquad\qquad \qquad  +\frac{1}{2}\int_{0}^{1}\theta^{2} \big( b_{xxx}(t,\theta\bar{x}(t)
+(1-\theta)x^{\varepsilon}(t),u^{\varepsilon}(t))\\[+0.6em]
\qquad\qquad\qquad\qquad\ -b_{xxx}(t,\theta\bar{x}(t)
+(1-\theta)x^{\varepsilon}(t),\bar{u}(t))\big)\delta x(t)^3
d\theta\Big]dt\\[+0.6em]
\qquad\qquad \ \ +\Big[ \sigma_x(t)\delta x(t)+\frac{1}{2}\sigma_{xx}(t)\delta x(t)^2
+\frac{1}{6}\sigma_{xxx}(t)\delta x(t)^3\\[+0.6em]
\qquad\qquad \ \
+\delta \sigma(t)\chi_{E_{\varepsilon}}(t)
+\delta \sigma_x(t)\delta x(t)\chi_{E_{\varepsilon}}(t)
\\[+0.6em]
\qquad\qquad \ \
+\frac{1}{2}\delta \sigma_{xx}(t)\delta x(t)^{2}
\chi_{E_{\varepsilon}}(t)
+\frac{1}{6}\delta \sigma_{xxx}(t)\delta x(t)^{3}\chi_{E_{\varepsilon}}(t)\\[+0.6em]
\qquad\qquad \ \ +\frac{1}{6}\int_{0}^{1}\theta^3 \sigma_{xxxx}(t,\theta\bar{x}(t)+
(1-\theta)x^{\varepsilon}(t),\bar{u}(t))\delta x(t)^4d\theta\\[+0.6em]
\qquad\qquad \ \ +\frac{1}{6}\int_{0}^{1}\theta^{3}
\big( \sigma_{xxxx}(t,\theta\bar{x}(t)+
(1-\theta)x^{\varepsilon}(t),u^{\varepsilon}(t))\\[+0.6em]
\qquad\qquad\qquad\ -\sigma_{xxxx}(t,\theta\bar{x}(t)+
(1-\theta)x^{\varepsilon}(t),\bar{u}(t))\big)\delta x(t)^4
 d\theta\Big]dW(t),\\[+0.6em]
\qquad\qquad \ \ t\in [0,T],\\[+0.6em]
\delta x(0)=0.
\end{array}\right.
\end{equation}

Combining the variational equations (\ref{firstvariequ}) and (\ref{secondvariequ}) with the equation (\ref{expdeltax equ1}), we see that $r_{2}(\cdot)$ is the solution to the stochastic differential equation:

\begin{equation}\label{r2}
\quad\left\{
\begin{array}{l}
dr_{2}(t)=\Big[b_x(t)r_{2}(t)
+\frac{1}{2}b_{xx}(t)\big(\delta x(t)^2-y_{1}^{\varepsilon}(t)^{2}\big)\\[+0.6em]
\qquad\qquad
 +\frac{1}{2}\int_{0}^{1}\theta^2 b_{xxx}(t,\theta\bar{x}(t)+
(1-\theta)x^{\varepsilon}(t),\bar{u}(t))\delta x(t)^3d\theta\\[+0.6em]
\qquad\qquad
+\int_{0}^{1} \big( b_{x}(t,\theta\bar{x}(t)+
(1-\theta)x^{\varepsilon}(t),u^{\varepsilon}(t))\\[+0.6em]
\qquad\qquad\qquad\qquad
 -b_{x}(t,\theta\bar{x}(t)+(1-\theta)x^{\varepsilon}(t),\bar{u}(t))\big)
\delta x(t)d\theta
\Big]dt\\[+0.6em]
\qquad \qquad  +\Big[ \sigma_x(t)r_{2}(t)+\frac{1}{2}\sigma_{xx}(t)
\big(\delta x(t)^2-y_{1}^{\varepsilon}(t)^{2}\big)
+\delta \sigma_x(t)r_{1}(t)\chi_{E_{\varepsilon}}(t)\\[+0.6em]
\qquad\qquad +\frac{1}{2}\int_{0}^{1}\theta^2 \sigma_{xxx}(t,\theta\bar{x}(t)
+(1-\theta)x^{\varepsilon}(t),\bar{u}(t))\delta x(t)^3d\theta\\[+0.6em]
\qquad\qquad
 +\int_{0}^{1}\theta ( \sigma_{xx}(t,\theta\bar{x}(t)+
(1-\theta)x^{\varepsilon}(t),u^{\varepsilon}(t))\\[+0.6em]
\qquad\qquad\qquad\qquad
-\sigma_{xx}(t,\theta\bar{x}(t)+(1-\theta)x^{\varepsilon}(t),\bar{u}(t)))\delta x(t)^2 d\theta
\Big]dW(t),\\[+0.6em]
\qquad\qquad \ t\in [0,T],\\
r_{2}(0)=0.
\end{array}\right.
\end{equation}

By the conditions  (C1)--(C2), we have
\begin{eqnarray}\label{lemma3 1equ1add}
& &\me~\Big[\int_{0}^{T}\Big|\int_{0}^{1}\theta^2 b_{xxx}(t,\theta\bar{x}(t)+
(1-\theta)x^{\varepsilon}(t),\bar{u}(t))\delta x(t)^3 d\theta\Big|dt\Big]^{\beta}
\qquad\ \nonumber\\
&\le& C~\me\Big[\int_{0}^{T}\big|\delta x(t)^3\big|dt\Big]^{\beta}
\le C~\me \Big[\sup_{t\in[0,T]}|\delta x(t)|^{3\beta}\Big]
\le C\varepsilon^{\frac{3\beta}{2}},
\end{eqnarray}
\begin{eqnarray}\label{lemma3 1equ1}
& &\me~\Big[\int_{0}^{T}\Big|\int_{0}^{1}\Big( b_{x}(t,\theta\bar{x}(t)
+(1-\theta)x^{\varepsilon}(t),u^{\varepsilon}(t))\nonumber\\
& &\qquad\qquad\qquad
-b_{x}(t,\theta\bar{x}(t)
+(1-\theta)x^{\varepsilon}(t),\bar{u}(t))\Big)\delta x(t) d\theta\Big|dt\Big]^{\beta}\qquad\ \ \nonumber\\
&\le& C~\me\Big[\int_{0}^{T}\big|\delta x(t) \chi_{E_{\varepsilon}}(t)\big|dt\Big]^{\beta}
\le C~\me \Big[\sup_{t\in[0,T]}|\delta x(t)|^{\beta}\Big]\varepsilon^{\beta}
\le C\varepsilon^{\frac{3\beta}{2}},\quad
\end{eqnarray}
\begin{eqnarray}\label{lemma3 1equ2}
& &\me~\Big[\int_{0}^{T}\Big|\int_{0}^{1}\theta^2 \sigma_{xxx}(t,\theta\bar{x}(t)+
(1-\theta)x^{\varepsilon}(t),\bar{u}(t))\delta x(t)^3 d\theta\Big|^2dt\Big]^{\frac{\beta}{2}}
\qquad\ \nonumber\\
&\le& C\me~\Big[\int_{0}^{T}\big|\delta x(t)^3\big|^{2}dt\Big]^{\frac{\beta}{2}}
\le C\me~\Big[\sup_{t\in[0,T]}|\delta x(t)|^{3\beta}\Big]
\le C\varepsilon^{\frac{3\beta}{2}},
\end{eqnarray}
and
\begin{eqnarray}\label{lemma3 1equ3}
\quad\ \ & &\me~\Big[\int_{0}^{T}\Big|\int_{0}^{1}\theta \big( \sigma_{xx}(t,\theta\bar{x}(t)
+(1-\theta)x^{\varepsilon}(t),u^{\varepsilon}(t))\nonumber\\
& &\qquad\qquad\qquad\qquad
-\sigma_{xx}(t,\theta\bar{x}(t)
+(1-\theta)x^{\varepsilon}(t),\bar{u}(t))\big)\delta x(t)^2 d\theta\Big|^{2}dt\Big]^{\frac{\beta}{2}}\quad\quad\nonumber\\
&\le& C\me~\Big[\int_{0}^{T}|\delta x(t)^2 \chi_{E_{\varepsilon}}(t)|^{2}dt\Big]^{\frac{\beta}{2}}
\le C\me~ \Big[\sup_{t\in[0,T]}|\delta x(t)|^{2\beta}\Big]\varepsilon^{\frac{\beta}{2}}
\le C\varepsilon^{\frac{3\beta}{2}}.
\end{eqnarray}

On the other hand, combining (\ref{estlemma y1}) and (\ref{estimateofdeltax}) with (\ref{estlemma r1}), we have
\begin{eqnarray}\label{deltx2-y12}
& &\me\Big[\sup_{t\in[0,T]}|\delta x(t)^2-y_{1}^{\varepsilon}(t)^{2}|^{\beta}\Big]
=\me\Big[\sup_{t\in[0,T]}\big(|r_{1}(t)|^{\beta}|\delta x(t)+y_{1}^{\varepsilon}(t)
|^{\beta}\big)\Big]\nonumber\\
&\le&\Big(\me\Big[\sup_{t\in[0,T]}|r_{1}(t)|^{2\beta}\Big]\Big)^{\frac{1}{2}}
\Big(\me\Big[\sup_{t\in[0,T]}|\delta x(t)+y_{1}^{\varepsilon}(t)
|^{2\beta}\Big]\Big)^{\frac{1}{2}}
\le C\varepsilon^{\frac{3\beta}{2}}.\quad\quad
\end{eqnarray}

Then, combining (\ref{estimateofdeltax}), (\ref{lemma3 1equ1add})--(\ref{deltx2-y12}) with (\ref{r2}), we obtain that
\begin{eqnarray}\label{estlemma r2}
& &\me~\Big[\sup_{t\in[0,T]}|r_{2}(t)|^{\beta}\Big]\nonumber\\
&\le &C\me~\Big[\int_{0}^{T}\Big|\frac{1}{2}b_{xx}(t)
\big(\delta x(t)^2-y_{1}^{\varepsilon}(t)^{2}\big)\nonumber\\
& &\qquad+\frac{1}{2}\int_{0}^{1}\theta^2 b_{xxx}(t,\theta\bar{x}(t)+
(1-\theta)x^{\varepsilon}(t),\bar{u}(t))\delta x(t)^3d\theta\nonumber\\
& &\qquad+\int_{0}^{1} \big( b_{x}(t,\theta\bar{x}(t)+
(1-\theta)x^{\varepsilon}(t),u^{\varepsilon}(t))\nonumber\\
& &\qquad\qquad\qquad
-b_{x}(t,\theta\bar{x}(t)+(1-\theta)x^{\varepsilon}(t),\bar{u}(t))\big)\delta x(t) d\theta\Big|dt\Big]^{\beta}\nonumber\\
& &+C\me~\Big[\int_{0}^{T}\Big|\frac{1}{2}\sigma_{xx}(t)
\big(\delta x(t)^2-y_{1}^{\varepsilon}(t)^{2}\big)
+\delta \sigma_x(t)r_{1}(t)\chi_{E_{\varepsilon}}(t)\nonumber\\
& &\qquad+\frac{1}{2}\int_{0}^{1}\theta^2 \sigma_{xxx}(t,\theta\bar{x}(t)+
(1-\theta)x^{\varepsilon}(t),\bar{u}(t))\delta x(t)^3d\theta\nonumber\\
& &\qquad
 +\int_{0}^{1}\theta \big( \sigma_{xx}(t,\theta\bar{x}(t)+
(1-\theta)x^{\varepsilon}(t),u^{\varepsilon}(t))\nonumber\\
& &\qquad\qquad\qquad-\sigma_{xx}(t,\theta\bar{x}(t)
+(1-\theta)x^{\varepsilon}(t),\bar{u}(t))\big)\delta x(t)^2 d\theta\Big|^{2}dt\Big]^{\frac{\beta}{2}}\nonumber\\
&\le&C\Big\{\me\Big[\sup_{t\in[0,T]}|\delta x(t)^2
-y_{1}^{\varepsilon}(t)^{2}|^{\beta}\Big]
+\me\Big[\sup_{t\in[0,T]}|\delta x(t)|^{3\beta}\Big]
\nonumber\\
& &\quad
+\me~\Big[\sup_{t\in[0,T]}|\delta x(t)|^{\beta}\Big]\varepsilon^{\beta}
+\me~\Big[\sup_{t\in[0,T]}|\delta x(t)|^{2\beta}\Big]\varepsilon^{\frac{\beta}{2}}
\Big\}
\le C \varepsilon^{\frac{3\beta}{2}}.
\end{eqnarray}
This proves the estimation for $ r_{2}(\cdot)$.

Now, we prove the estimate for $r_3(\cdot)$.

Combining the variational equation (\ref{firstvariequ}),
(\ref{secondvariequ}) and (\ref{thirdvariequ}) with (\ref{expdeltax equ2}), we see that, $r_3(\cdot)$ is the solution to the stochastic differential equation:
\begin{equation}\label{r3}
\qquad\left\{
\begin{array}{l}
dr_{3}(t)=\Big[b_x(t)r_{3}(t)+\frac{1}{2}b_{xx}(t)\big(\delta x(t)^2-\gamma(t)^{2}\big)\\[+0.6em]
\qquad\qquad+\frac{1}{6}b_{xxx}(t)\big(\delta x(t)^3-y_{1}^{\varepsilon}(t)^{3}\big)
+\delta b_x(t)r_{1}(t)\chi_{E_{\varepsilon}}(t)\\[+0.6em]
\qquad\qquad
+\frac{1}{6}\int_{0}^{1}\theta^3 b_{xxxx}(t,\theta\bar{x}(t)+
(1-\theta)x^{\varepsilon}(t),\bar{u}(t))\delta x(t)^4d\theta\\[+0.6em]
\qquad \qquad
+\int_{0}^{1}\theta \big( b_{xx}(t,\theta\bar{x}(t)+
(1-\theta)x^{\varepsilon}(t),u^{\varepsilon}(t))\\[+0.6em]
\qquad\qquad
-b_{xx}(t,\theta\bar{x}(t)+(1-\theta)x^{\varepsilon}(t),\bar{u}(t))\big)
\delta x(t)^2 d\theta
\Big]dt\\[+0.6em]
\qquad \qquad  +\Big[ \sigma_x(t)r_{3}(t)+\frac{1}{2}\sigma_{xx}(t)
\big(\delta x(t)^2-\gamma(t)^{2}\big)\\[+0.6em]
\qquad\qquad
+\frac{1}{6}\sigma_{xxx}(t)\big(\delta x(t)^3-y_{1}^{\varepsilon}(t)^{3}\big)
+\delta \sigma_x(t)r_{2}(t)\chi_{E_{\varepsilon}}(t)\\[+0.6em]
\qquad\qquad+\frac{1}{2}\delta\sigma_{xx}(t)\big(\delta x(t)^2-y_{1}^{\varepsilon}(t)^{2}\big)\chi_{E_{\varepsilon}}(t)\\[+0.6em]
\qquad\qquad
+\frac{1}{6}\int_{0}^{1}\theta^3
 \sigma_{xxxx}(t,\theta\bar{x}(t)+
(1-\theta)x^{\varepsilon}(t),\bar{u}(t))\delta x(t)^4d\theta\\[+0.6em]
\qquad\qquad
+\frac{1}{2}\int_{0}^{1}\theta^{2} \big( \sigma_{xxx}(t,\theta\bar{x}(t)+
(1-\theta)x^{\varepsilon}(t),u^{\varepsilon}(t))\\[+0.6em]
\qquad\qquad \ -\sigma_{xxx}(t,\theta\bar{x}(t)
+(1-\theta)x^{\varepsilon}(t),\bar{u}(t))\big)\delta x(t)^3 d\theta
\Big]dW(t),\qquad\qquad\\[+0.6em]
r_{3}(0)=0.
\end{array}\right.
\end{equation}

Similar to (\ref{lemma3 1equ1add}), we can prove that
\begin{eqnarray}\label{r3 equ1}
& &\me~\Big[\int_{0}^{T}\Big|\int_{0}^{1}\theta^3 b_{xxxx}(t,\theta\bar{x}(t)+
(1-\theta)x^{\varepsilon}(t),\bar{u}(t))
\delta x(t)^4 d\theta\Big|dt\Big]^{\beta} \nonumber\\
&\le& C\me~ \Big[\sup_{t\in[0,T]}|\delta x(t)|^{4\beta}\Big]
\le C\varepsilon^{2\beta}.
\end{eqnarray}
Similar to (\ref{lemma3 1equ1}), we have
\begin{eqnarray}\label{r3 equ2}
& &\me~\Big[\int_{0}^{T}\Big|\int_{0}^{1}\theta^2 \big\{ \sigma_{xxx}(t,\theta\bar{x}(t)
+(1-\theta)x^{\varepsilon}(t),u^{\varepsilon}(t))\nonumber\\
& &\qquad\qquad\qquad-\sigma_{xxx}(t,\theta\bar{x}(t)
+(1-\theta)x^{\varepsilon}(t),\bar{u}(t))\big\}\delta x(t)^3 d\theta\Big|dt\Big]^{\beta}\nonumber\\
&\le& C\me~\Big[\int_{0}^{T}|\delta x(t)^3 \chi_{E_{\varepsilon}}(t)|dt\Big]^{\beta}
\le C\me ~\Big[\sup_{t\in[0,T]}|\delta x(t)|^{3\beta}\Big]\varepsilon^{\frac{\beta}{2}}
\le C\varepsilon^{2\beta}.
\end{eqnarray}
In a similar way, we have
\begin{eqnarray}\label{r3 equ3}
& &\me~\Big[\int_{0}^{T}\Big|\int_{0}^{1}\theta^3 \sigma_{xxxx}(t,\theta\bar{x}(t)+
(1-\theta)x^{\varepsilon}(t),\bar{u}(t))\delta x(t)^4 d\theta\Big|^{2}dt\Big]^{\frac{\beta}{2}} \nonumber\\
&\le& C\me~\Big[\sup_{t\in[0,T]}|\delta x(t)|^{4\beta}\Big]
\le C\varepsilon^{2\beta},
\end{eqnarray}
and
\begin{eqnarray}\label{r3 equ4}
& &\me~\Big(\int_{0}^{T}\Big|\int_{0}^{1}\theta^2 \big\{ \sigma_{xxx}(t,\theta\bar{x}(t)
+(1-\theta)x^{\varepsilon}(t),u^{\varepsilon}(t))\nonumber\\
& &\qquad\qquad\qquad-\sigma_{xxx}(t,\theta\bar{x}(t)
+(1-\theta)x^{\varepsilon}(t),\bar{u}(t))\big\}\delta x(t)^3 d\theta\Big|^{2}dt\Big)^{\frac{\beta}{2}}\nonumber\\
&\le& C~\me\Big(\int_{0}^{T}|\delta x(t)^3 \chi_{E_{\varepsilon}}(t)|^{2}dt\Big)^{\frac{\beta}{2}}
\le C~\me \Big[\sup_{t\in[0,T]}|\delta x(t)|^{3\beta}\Big]\varepsilon^{\frac{\beta}{2}}
\le C\varepsilon^{2\beta}.
\end{eqnarray}

On the other hand, by (\ref{estlemma y1}), (\ref{estlemma y2}), (\ref{estimateofdeltax}) and (\ref{estlemma r2}), we get that
\begin{eqnarray}\label{r3 equ5}
& &\me~\Big[\sup_{t\in[0,T]}|\delta x(t)^2-\gamma(t)^{2}|^{\beta}\Big]\nonumber\\
&=&\me~\Big[\sup_{t\in[0,T]}\Big(|r_{2}(t)|^{\beta}\cdot|\delta x(t)+y_{1}^{\varepsilon}(t)
+y_{2}^{\varepsilon}(t)
|^{\beta}\Big)\Big]\nonumber\\
&\le&\Big(\me~\Big[\sup_{t\in[0,T]}|r_{2}(t)|^{2\beta}\Big]\Big)^{\frac{1}{2}}
\cdot\Big(\me~\Big[\sup_{t\in[0,T]}|\delta x(t)+y_{1}^{\varepsilon}(t)
+y_{2}^{\varepsilon}(t)
|^{2\beta}\Big]\Big)^{\frac{1}{2}}\nonumber\\
&\le& C\varepsilon^{2\beta}.
\end{eqnarray}
Also, by (\ref{estlemma y1}), (\ref{estimateofdeltax}) and (\ref{estlemma r1}), we have
\begin{eqnarray}\label{r3 equ6}
& &\me~\Big[\sup_{t\in[0,T]}|\delta x^{3}-y_{1}^{\varepsilon}(t)^{3}|^{\beta}\Big]\nonumber\\
&=&\me~\Big[\sup_{t\in[0,T]}\Big(|r_{1}(t)|^{\beta}\cdot|\delta x(t)^2+\delta x(t) y_{1}^{\varepsilon}(t)+y_{1}^{\varepsilon}(t)^2
|^{\beta}\Big)\Big]\nonumber\\
&\le&\Big(\me\Big[\sup_{t\in[0,T]}|r_{1}(t)|^{2\beta}\Big]\Big)^{\frac{1}{2}}\cdot
\Big(\me\Big[\sup_{t\in[0,T]}|\delta x(t)^2+\delta x(t) y_{1}^{\varepsilon}(t)
+y_{1}^{\varepsilon}(t)^2
|^{2\beta}\Big]\Big)^{\frac{1}{2}}\nonumber \\
&\le& C\varepsilon^{2\beta}.
\end{eqnarray}

Combining (\ref{r3 equ1})--(\ref{r3 equ6}) with (\ref{estimateofx}), we obtain that
\begin{eqnarray}\label{estlemma r3}
& & \me~\Big[\sup_{t\in[0,T]}|r_{3}(t)|^{\beta}\Big]\nonumber\\
&\le &C\me~\Big[\int_{0}^{T}\Big|\frac{1}{2}b_{xx}(t)\big(\delta x(t)^2-\gamma(t)^{2}\big)\nonumber\\
& &+\frac{1}{6}b_{xxx}(t)\big(\delta x(t)^3-y_{1}^{\varepsilon}(t)^{3}\big)
+\delta b_x(t)r_{1}(t)\chi_{E_{\varepsilon}}(t)\nonumber\\
& &+\frac{1}{6}\int_{0}^{1}\theta^3 b_{xxx}(t,\theta\bar{x}(t)+
(1-\theta)x^{\varepsilon}(t),\bar{u}(t))\delta x(t)^4d\theta\nonumber\\
& &+\int_{0}^{1}\theta \Big( b_{xx}(t,\theta\bar{x}(t)+
(1-\theta)x^{\varepsilon}(t),u^{\varepsilon}(t))\nonumber\\
& &\qquad\qquad\qquad -b_{xx}(t,\theta\bar{x}(t)+(1-\theta)x^{\varepsilon}(t),\bar{u}(t))\Big)\delta x(t)^2 d\theta\Big|dt\Big]^{\beta}\nonumber\\
& &+C\me~\Big[\int_{0}^{T}\Big|\frac{1}{2}\sigma_{xx}(t)\big(\delta x(t)^2-\gamma(t)^{2}\big)\nonumber\\
& &+\frac{1}{6}\sigma_{xxx}(t)\big(\delta x(t)^3-y_{1}^{\varepsilon}(t)^{3}\big)
+\delta \sigma_x(t)r_{2}(t)\chi_{E_{\varepsilon}}(t)\nonumber\\
& &+\frac{1}{2}\delta\sigma_{xx}(t)\big(\delta x(t)^2
-y_{1}^{\varepsilon}(t)^{2}\big)\chi_{E_{\varepsilon}}(t)\nonumber\\
& &+\frac{1}{6}\int_{0}^{1}\theta^3 \sigma_{xxxx}(t,\theta\bar{x}(t)+
(1-\theta)x^{\varepsilon}(t),\bar{u}(t))\delta x(t)^4d\theta\nonumber\\
& &+\frac{1}{2}\int_{0}^{1}\theta^{2} \Big( \sigma_{xxx}(t,\theta\bar{x}(t)+
(1-\theta)x^{\varepsilon}(t),u^{\varepsilon}(t)) \nonumber\\
& &\qquad\qquad\qquad
-\sigma_{xxx}(t,\theta\bar{x}(t)+(1-\theta)x^{\varepsilon}(t),\bar{u}(t))\Big)\delta x(t)^3 d\theta\Big|^{2}dt\Big]^{\frac{\beta}{2}}\nonumber\\
&\le&C\Big\{\me~\Big[\sup_{t\in[0,T]}|\delta x(t)^2-\gamma(t)^{2}|^{\beta}\Big]
+\me~\Big[\sup_{t\in[0,T]}|\delta x(t)^3-y_{1}^{\varepsilon}(t)^{3}|^{\beta}\Big]\nonumber\\
& &+\me~\Big[\sup_{t\in[0,T]}|r_{1}(t)|^{\beta}\Big]\varepsilon^{\beta}
+\varepsilon^{2\beta}+\varepsilon^{2\beta}+\me~\Big[\sup_{t\in[0,T]}|\delta x(t)^2-\gamma(t)^{2}|^{\beta}\Big]\nonumber\\
& &
+\me~\Big[\sup_{t\in[0,T]}|\delta x(t)^3-y_{1}^{\varepsilon}(t)^{3}|^{\beta}\Big]
+\me~\Big[\sup_{t\in[0,T]}|r_{2}(t)|^{\beta}\Big]\varepsilon^{\frac{\beta}{2}}\nonumber\\
& &+ \me~\Big[\sup_{t\in[0,T]}|\delta x(t)^2
-y_{1}^{\varepsilon}(t)^{2}|^{\beta}\Big]\varepsilon^{\frac{\beta}{2}}
+\varepsilon^{2\beta}+\varepsilon^{2\beta}\Big\}
\le C \varepsilon^{2\beta}.
\end{eqnarray}
This proves the estimate for $r_{3}(\cdot)$.

\textbf{Step 3:} We now estimate $\|(\delta x)^{2}-\eta^2 \|_{\infty,\beta}^{\beta}$,
$\|(\delta x)^{3}-\gamma^3 \|_{\infty,\beta}^{\beta}$ and $\|(\delta x)^{4}-(y_{1}^{\varepsilon})^4 \|_{\infty,\beta}^{\beta}$.

First, by (\ref{estlemma y1})--(\ref{estlemma y3}),
(\ref{estimateofdeltax}) and (\ref{estlemma r3}), we have
\begin{eqnarray}\label{step3eta}
& &\me\Big[\sup_{t\in[0,T]}|\delta x(t)^2-\eta(t)^{2}|^{\beta}\Big]\nonumber\\
&=&\me\Big[\sup_{t\in[0,T]}\Big(|r_{3}(t)|^{\beta}\cdot|\delta x(t)+y_{1}^{\varepsilon}(t)+y_{2}^{\varepsilon}(t)+y_{3}^{\varepsilon}(t)
|^{\beta}\Big)\Big]\nonumber\\
&\le&\Big(\me\Big[\sup_{t\in[0,T]}|r_{3}(t)|^{2\beta}\Big]\Big)^{\frac{1}{2}}
\Big(\me\Big[\sup_{t\in[0,T]}|\delta x(t)+y_{1}^{\varepsilon}(t)+y_{2}^{\varepsilon}(t)+y_{3}^{\varepsilon}(t)
|^{2\beta}\Big]\Big)^{\frac{1}{2}}\nonumber\\
&\le& C\varepsilon^{\frac{5\beta}{2}}.
\end{eqnarray}
Next, by (\ref{estlemma y1}), (\ref{estlemma y2}), (\ref{estimateofdeltax}) and (\ref{estlemma r2}), we get
\begin{eqnarray}\label{step3gamma}
& &\me\Big[\sup_{t\in[0,T]}|\delta x^{3}-\gamma(t)^{3}|^{\beta}\Big]\nonumber\\
&=&\me\Big[\sup_{t\in[0,T]}\Big(|r_{2}(t)|^{\beta}\cdot|\delta x(t)^2+\delta x(t) \gamma(t)+\gamma(t)^2
|^{\beta}\Big)\Big]\nonumber\\
&\le&\Big(\me\Big[\sup_{t\in[0,T]}|r_{2}(t)|^{2\beta}\Big]\Big)^{\frac{1}{2}}\cdot
\Big(\me\Big[\sup_{t\in[0,T]}|\delta x(t)^2
+\delta x(t) \gamma(t)+\gamma(t)^2
|^{2\beta}\Big]\Big)^{\frac{1}{2}}\ \nonumber\\
&\le& C\varepsilon^{\frac{5\beta}{2}}.
\end{eqnarray}
Finally, by (\ref{estlemma y1}), (\ref{estimateofdeltax}) and (\ref{estlemma r1}), we have
\begin{eqnarray}\label{step3y1}
& &\me~\Big[\sup_{t\in[0,T]}|\delta x^{4}-(y_{1}^{\varepsilon})^4|^{\beta}\Big]\nonumber\\
&=&\me~\Big[\sup_{t\in[0,T]}\Big(|r_{1}(t)|^{\beta}\cdot
|\delta x(t)+y_{1}^{\varepsilon}(t)|^{\beta}\cdot
|\delta x(t)^2+y_{1}^{\varepsilon}(t)^2|^{\beta}\Big)\Big]\nonumber\\
&\le&\Big(\me~\Big[\sup_{t\in[0,T]}|r_{1}(t)|^{2\beta}\Big]\Big)^{\frac{1}{2}}\cdot
\Big(\me~\Big[\sup_{t\in[0,T]}|\delta x(t)+y_{1}^{\varepsilon}(t)
|^{4\beta}\Big]\Big)^{\frac{1}{4}}\cdot\nonumber\\
& &\qquad\qquad\qquad\qquad\qquad\qquad
\Big(\me~\Big[\sup_{t\in[0,T]}|\delta x(t)^2+y_{1}^{\varepsilon}(t)^2
|^{4\beta}\Big]\Big)^{\frac{1}{4}}\qquad \nonumber\\
&\le& C\varepsilon^{\frac{5\beta}{2}}.
\end{eqnarray}

\textbf{Step 4:} Estimate for $\|r_{4}\|_{\infty,\beta}^{\beta}$.

By (\ref{firstvariequ})--(\ref{forthvariequ}) and (\ref{expdeltax equ3}), we obtain that
\begin{equation}\nonumber
\left\{
\begin{array}{l}
dr_{4}(t)=\Big[b_x(t)r_{4}(t)+\frac{1}{2}b_{xx}(t)\big(\delta x(t)^2-\eta(t)^{2}\big)\\[+0.6em]
\qquad\qquad+\frac{1}{6}b_{xxx}(t)\big(\delta x(t)^3-\gamma(t)^{3}\big)
+\delta b_x(t)r_{2}(t)\chi_{E_{\varepsilon}}(t)\\[+0.6em]
\qquad\qquad+\frac{1}{2}\delta b_{xx}(t)\big(\delta x(t)^{2}-y_{1}^{\varepsilon}(t)^{2}\big)\chi_{E_{\varepsilon}}(t)\\[+0.6em]
\qquad\qquad +\frac{1}{6}\int_{0}^{1}\theta^3 b_{xxx}(t,\theta\bar{x}(t)+
(1-\theta)x^{\varepsilon}(t),\bar{u}(t))\delta x(t)^4d\theta\\[+0.6em]
\qquad\qquad
-\frac{1}{24}b_{xxxx}(t)y_{1}^{\varepsilon}(t)^{4}\\[+0.6em]
\qquad\qquad
+\frac{1}{2}\int_{0}^{1}\theta^{2} \big( b_{xxx}(t,\theta\bar{x}(t)+
(1-\theta)x^{\varepsilon}(t),u^{\varepsilon}(t)) \\[+0.6em]
\qquad\qquad\qquad\qquad
-b_{xxx}(t,\theta\bar{x}(t)
+(1-\theta)x^{\varepsilon}(t),\bar{u}(t))\big)\delta x(t)^3 d\theta
\Big]dt\\[+0.6em]
\qquad \qquad +\Big[ \sigma_x(t)r_{4}(t)
+\frac{1}{2}\sigma_{xx}(t)\big(\delta x(t)^2-\eta(t)^{2}\big)\\[0.6em]
\qquad\qquad
+\frac{1}{6}\sigma_{xxx}(t)\big(\delta x(t)^3-\gamma(t)^{3}\big)
+\delta \sigma_x(t)r_{3}(t)\chi_{E_{\varepsilon}}(t)\\[+0.6em]
\qquad\qquad+\frac{1}{2}\delta\sigma_{xx}(t)
\big(\delta x(t)^2-\gamma(t)^{2}\big)\chi_{E_{\varepsilon}}(t)\\[+0.6em]
\qquad\qquad
+\frac{1}{6}\delta \sigma_{xxx}(t)\big(\delta x(t)^3-y_{1}^{\varepsilon}(t)^{3}\big)\chi_{E_{\varepsilon}}(t)\\[+0.6em]
\qquad\qquad +\frac{1}{6}\int_{0}^{1}\theta^3 \sigma_{xxxx}(t,\theta\bar{x}(t)
+(1-\theta)x^{\varepsilon}(t),\bar{u}(t))\delta x(t)^4d\theta\\[+0.6em]
\qquad\qquad
-\frac{1}{24}\sigma_{xxxx}(t)y_{1}^{\varepsilon}(t)^{4}\\[+0.6em]
\qquad\qquad
+\frac{1}{6}\int_{0}^{1}\theta^3 \big( \sigma_{xxxx}(t,\theta\bar{x}(t)
+(1-\theta)x^{\varepsilon}(t),u^{\varepsilon}(t))\\[+0.6em]
\qquad\qquad\qquad\
-\sigma_{xxxx}(t,\theta\bar{x}(t)
+(1-\theta)x^{\varepsilon}(t),\bar{u}(t))\big)\delta x(t)^4 d\theta
\Big]dW(t),\\[+0.6em]
\qquad\qquad t\in [0,T],\\[+0.6em]
r_{4}(0)=0.
\end{array}\right.
\end{equation}

By (\ref{step3y1}) and the conditions (C1)--(C2),  we have
\begin{eqnarray}\label{lemma3 1equ7}
& &\me\Big[\int_{0}^{T}\Big|\frac{1}{6}\int_{0}^{1}\theta^3
 b_{xxxx}(t,\theta\bar{x}(t)+
(1-\theta)x^{\varepsilon}(t),\bar{u}(t))\delta x(t)^4 d\theta\nonumber\\
& &\qquad\qquad\qquad\qquad\qquad\qquad\qquad\qquad\qquad
-\frac{1}{24}b_{xxxx}(t)y_{1}^{\varepsilon}(t)^{4}
\Big|dt\Big]^{\beta}\nonumber\\
&\le& C\me~\Big[\frac{1}{6}\int_{0}^{T}\Big|\int_{0}^{1}
\theta^3\Big( b_{xxxx}(t,\theta\bar{x}(t)+
(1-\theta)x^{\varepsilon}(t),\bar{u}(t))\nonumber\\
& &\qquad\qquad\qquad\qquad\qquad\qquad\qquad\qquad\qquad
-b_{xxxx}(t)\Big)\delta x(t)^4d\theta\Big|dt\Big]^{\beta}\nonumber\\
& &+C\me~\Big[\frac{1}{6}\int_{0}^{T}\Big|\int_{0}^{1}
\theta^3 b_{xxxx}(t)\delta x(t)^4d\theta-\frac{1}{4}b_{xxxx}(t)y_{1}^{\varepsilon}(t)^{4}
\Big|dt\Big]^{\beta}\nonumber\\
&\le& C\me~\Big[\sup_{t\in[0,T]}|\delta x(t)^{5\beta}|dt\Big]
+C\me~\Big[\sup_{t\in[0,T]}|\delta x(t)^4-y_{1}^{\varepsilon}(t)^{4}|^{\beta}\Big]\nonumber\\
&\le& C\varepsilon^{\frac{5\beta}{2}}.
\end{eqnarray}
Similarly,
\begin{eqnarray}\label{lemma3 1equ7add}
& &\me\Big[\int_{0}^{T}\Big|\frac{1}{6}\int_{0}^{1}\theta^3
 \sigma_{xxxx}(t,\theta\bar{x}(t)+
(1-\theta)x^{\varepsilon}(t),\bar{u}(t))\delta x(t)^4 d\theta\nonumber\\
& &\qquad\qquad\qquad\qquad\qquad\qquad\qquad\qquad\qquad
-\frac{1}{24} \sigma_{xxxx}(t)y_{1}^{\varepsilon}(t)^{4}
\Big|^{2}dt\Big]^{\frac{\beta}{2}}\nonumber\\
&\le& C\varepsilon^{\frac{5\beta}{2}}.
\end{eqnarray}

Next, similar to (\ref{lemma3 1equ1}), we have
\begin{eqnarray}\label{lemma3 1equ8}
& &\me~\Big[\int_{0}^{T}\Big|\int_{0}^{1}\theta^2 \Big( b_{xxx}(t,\theta\bar{x}(t)
+(1-\theta)x^{\varepsilon}(t),u^{\varepsilon}(t))\nonumber\\
& &\qquad\qquad\qquad
-b_{xxx}(t,\theta\bar{x}(t)
+(1-\theta)x^{\varepsilon}(t),\bar{u}(t))\Big)\delta x(t)^3 d\theta\Big|dt\Big]^{\beta}\nonumber\\
&\le& C\me~\Big[\int_{0}^{T}\big|\delta x(t)^3 \chi_{E_{\varepsilon}}(t)\big|dt\Big]^{\beta}
\le C\me~ \Big[\sup_{t\in[0,T]}|\delta x(t)|^{3\beta}\Big]\varepsilon^{\beta}
\le C\varepsilon^{\frac{5\beta}{2}}.
\end{eqnarray}
and
\begin{eqnarray}\label{lemma3 1equ9}
& &\me~\Big[\int_{0}^{T}\Big|\int_{0}^{1}\theta^3 \Big( \sigma_{xxxx}(t,\theta\bar{x}(t)
+(1-\theta)x^{\varepsilon}(t),u^{\varepsilon}(t))\nonumber\\
& &\qquad\qquad\qquad
-\sigma_{xxxx}(t,\theta\bar{x}(t)
+(1-\theta)x^{\varepsilon}(t),\bar{u}(t))\Big)
\delta x(t)^4 d\theta\Big|^{2}dt\Big]^{\frac{\beta}{2}}\nonumber\\
&\le& C\me~\Big[\int_{0}^{T}\big|\delta x(t)^4 \chi_{E_{\varepsilon}}(t)\big|^{2}dt\Big]^{\frac{\beta}{2}}
\le C\me~ \Big[\sup_{t\in[0,T]}|\delta x(t)|^{4\beta}\Big]\varepsilon^{\frac{\beta}{2}}
\le C\varepsilon^{\frac{5\beta}{2}}.
\end{eqnarray}

Finally, by (\ref{estimateofx}) and (\ref{step3eta})--(\ref{lemma3 1equ9}), we obtain that
\begin{eqnarray*}
& &\me~\Big[\sup_{t\in[0,T]}|r_{4}(t)|^{\beta}\Big]\\
&\le &C\me~\Big[\int_{0}^{T}\Big|\frac{1}{2}b_{xx}(t)\big(\delta x(t)^2-\eta(t)^{2}\big)
+\frac{1}{6}b_{xxx}(t)\big(\delta x(t)^3-\gamma(t)^{3}\big)\\
& &+\delta b_x(t)r_{2}(t)\chi_{E_{\varepsilon}}(t)
+\frac{1}{2}\delta b_{xx}(t)\big(\delta x(t)^{2}
-y_{1}^{\varepsilon}(t)^{2}\big)\chi_{E_{\varepsilon}}(t)\\
& &+\frac{1}{6}\int_{0}^{1}\theta^3 b_{xxxx}(t,\theta\bar{x}(t)+
(1-\theta)x^{\varepsilon}(t),\bar{u}(t))\delta x(t)^4d\theta
-\frac{1}{24}b_{xxxx}(t)y_{1}^{\varepsilon}(t)^{4}\\
& &+\frac{1}{2}\int_{0}^{1}\theta^{2} \Big( b_{xxx}(t,\theta\bar{x}(t)+
(1-\theta)x^{\varepsilon}(t),u^{\varepsilon}(t))\\
& &\qquad\qquad\qquad
-b_{xxx}(t,\theta\bar{x}(t)
+(1-\theta)x^{\varepsilon}(t),\bar{u}(t))\Big)\delta x(t)^3 d\theta\Big|dt\Big]^{\beta}\\
& &+C\me~\Big[\int_{0}^{T}\Big|\frac{1}{2}\sigma_{xx}(t)\big(\delta x(t)^2
-\eta(t)^{2}\big)
+\frac{1}{6}\sigma_{xxx}(t)\big(\delta x(t)^3-\gamma(t)^{3}\big)\\
& &
+\delta \sigma_x(t)r_{3}(t)\chi_{E_{\varepsilon}}(t)
+\frac{1}{2}\delta\sigma_{xx}(t)\big(\delta x(t)^2
-\gamma(t)^{2}\big)\chi_{E_{\varepsilon}}(t)\\
& &+\frac{1}{6}\delta \sigma_{xxx}(t)\big(\delta x(t)^3
-y_{1}^{\varepsilon}(t)^{3}\big)\chi_{E_{\varepsilon}}(t)\\
& &+\frac{1}{6}\int_{0}^{1}\theta^3 \sigma_{xxxx}(t,\theta\bar{x}(t)+
(1-\theta)x^{\varepsilon}(t),\bar{u}(t))\delta x(t)^4d\theta
-\frac{1}{24}\sigma_{xxxx}(t)y_{1}^{\varepsilon}(t)^{4}\\
& &+\frac{1}{6}\int_{0}^{1}\theta^3 \Big( \sigma_{xxxx}(t,\theta\bar{x}(t)+
(1-\theta)x^{\varepsilon}(t),u^{\varepsilon}(t))\\
& &\qquad\qquad\qquad
-\sigma_{xxxx}(t,\theta\bar{x}(t)
+(1-\theta)x^{\varepsilon}(t),\bar{u}(t))\Big)\delta x(t)^4 d\theta\Big|^{2}dt\Big]^{\frac{\beta}{2}}\\
&\le&C\Big\{\me~\Big[\sup_{t\in[0,T]}|\delta x(t)^2-\eta(t)^{2}|^{\beta}\Big]
+\me~\Big[\sup_{t\in[0,T]}|\delta x(t)^3-\gamma(t)^{3}|^{\beta}\Big]\\
& &
+\me~\Big[\sup_{t\in[0,T]}|r_{2}(t)|^{\beta}\Big]\varepsilon^{\beta}
+\me~\Big[\sup_{t\in[0,T]}|\delta x(t)^2
-y_{1}^{\varepsilon}(t)^{2}|^{\beta}\Big]\varepsilon^{\beta}\\
& &
+\varepsilon^{\frac{5\beta}{2}}+\varepsilon^{\frac{5\beta}{2}}
+\me~\Big[\sup_{t\in[0,T]}|\delta x(t)^2-\eta(t)^{2}|^{\beta}\Big]
+\me~\Big[\sup_{t\in[0,T]}|\delta x(t)^3-\gamma(t)^{3}|^{\beta}\Big]\\
& &
+\me~\Big[\sup_{t\in[0,T]}|r_{3}(t)|^{\beta}\Big]\varepsilon^{\frac{\beta}{2}}
+\me~\Big[\sup_{t\in[0,T]}|\delta x(t)^2-\gamma(t)^{2}|^{\beta}\Big]\varepsilon^{\frac{\beta}{2}}\\
& &
+\me~\Big[\sup_{t\in[0,T]}|\delta x(t)^3
-y_{1}^{\varepsilon}(t)^{3}|^{\beta}\Big]\varepsilon^{\frac{\beta}{2}}
+\varepsilon^{\frac{5\beta}{2}}+\varepsilon^{\frac{5\beta}{2}}\Big\}\\
&\le &C \varepsilon^{\frac{5\beta}{2}}.
\end{eqnarray*}
This completes the proof of Lemma \ref{estimateofvariequ}.

\section{Proof of Proposition \ref{variational formulation for noncov}}
First, by (\ref{firstvariequ})--(\ref{forthvariequ}), we have
\begin{eqnarray}\label{xi(t)}
\ \xi(t)&=&\int_{0}^{t}\Big[b_{x}(s)\xi(s)+\frac{1}{2}b_{xx}(s)\big(\eta(s),\eta(s)\big)
+\frac{1}{6}b_{xxx}(s)\big(\gamma(s),\gamma(s),\gamma(s)\big)\nonumber\\
& &+\frac{1}{24}b_{xxxx}(s)\big(y_{1}^{\varepsilon}(s),y_{1}^{\varepsilon}(s),
y_{1}^{\varepsilon}(s),y_{1}^{\varepsilon}(s)\big)
+\delta b(s)\chi_{E_{\varepsilon}}(s)
\nonumber\\
& &+\delta b_{x}(s)\gamma(s)\chi_{E_{\varepsilon}}(s)
+\frac{1}{2}\delta b_{xx}(s)\big(y_{1}^{\varepsilon}(s),y_{1}^{\varepsilon}(s)\big)
\chi_{E_{\varepsilon}}(s)\Big]ds\nonumber\\
& &+\int_{0}^{t}\Big[\sigma_{x}(s)\xi(s)+\frac{1}{2}\sigma_{xx}(s)\big(\eta(s),\eta(s)\big)
+\frac{1}{6}\sigma_{xxx}(s)\big(\gamma(s),\gamma(s),\gamma(s)\big)\nonumber\\
& &+\frac{1}{24}\sigma_{xxxx}(s)\big(y_{1}^{\varepsilon}(s),y_{1}^{\varepsilon}(s),
y_{1}^{\varepsilon}(s),y_{1}^{\varepsilon}(s)\big)
+\delta \sigma(s)\chi_{E_{\varepsilon}}(s)
+\delta \sigma_{x}(s)\eta(s)\chi_{E_{\varepsilon}}(s)\nonumber\\
& &
+\frac{1}{2}\delta \sigma_{xx}(s)\big( \gamma(s), \gamma(s)\big)\chi_{E_{\varepsilon}}(s)
+\frac{1}{6}\delta \sigma_{xxx}(s)\big(y_{1}^{\varepsilon}(s),y_{1}^{\varepsilon}(s),
y_{1}^{\varepsilon}(s)\big)
\chi_{E_{\varepsilon}}(s)\Big]dW(s),
\end{eqnarray}
\begin{eqnarray}\label{eta(t)2}
\eta(t)&=&\int_{0}^{t}\Big[b_{x}(s)\eta(s)+\frac{1}{2}b_{xx}(s)\big( \gamma(s), \gamma(s)\big)
+\frac{1}{6}b_{xxx}(s)\big(y_{1}^{\varepsilon}(s),y_{1}^{\varepsilon}(s),
y_{1}^{\varepsilon}(s)\big)
\nonumber\\
& &+\delta b(s)\chi_{E_{\varepsilon}}(s)+
\delta b_{x}(s)y_{1}^{\varepsilon}(s)\chi_{E_{\varepsilon}}(s)
\Big]ds\nonumber\\
& &+\int_{0}^{t}\Big[\sigma_{x}(s)\eta(s)+\frac{1}{2}\sigma_{xx}(s)\big( \gamma(s), \gamma(s)\big)
+\frac{1}{6}\sigma_{xxx}(s)\big(y_{1}^{\varepsilon}(s),y_{1}^{\varepsilon}(s),
y_{1}^{\varepsilon}(s)\big)\nonumber\\
& &+\delta \sigma(s)\chi_{E_{\varepsilon}}(s)
+\delta \sigma_{x}(s)\gamma(s)\chi_{E_{\varepsilon}}(s)
+\frac{1}{2}\delta \sigma_{xx}(s)\big(y_{1}^{\varepsilon}(s),y_{1}^{\varepsilon}(s)\big)
\chi_{E_{\varepsilon}}(s)\Big]dW(s),
\end{eqnarray}
and
\begin{eqnarray}\label{gamma(t)3}
\gamma(t)&=&\int_{0}^{t}\Big[b_{x}(s)\gamma(s)
+\frac{1}{2}b_{xx}(s)\big(y_{1}^{\varepsilon}(s),y_{1}^{\varepsilon}(s)\big)
+\delta b(s)\chi_{E_{\varepsilon}}(s)\Big]ds\nonumber\\
& &+\int_{0}^{t}\Big[\sigma_{x}(s)\gamma(s)
+\frac{1}{2}\sigma_{xx}(s)\big(y_{1}^{\varepsilon}(s),y_{1}^{\varepsilon}(s)\big)
\qquad\qquad\qquad\qquad\qquad\qquad\qquad\ \ \nonumber\\
& &\qquad\ \ +\delta \sigma(s)\chi_{E_{\varepsilon}}(s)
+\delta \sigma_{x}(s)y_{1}^{\varepsilon}(s)\chi_{E_{\varepsilon}}(s)
\Big]dW(s).
\end{eqnarray}

Then, using the formula (\ref{Ito formulation}) in Lemma \ref{o formu}, we obtain that
\begin{eqnarray}\label{hx(t)xi(t)}
& &\me ~\Big\langle h_{x}(\bar{x}(T)),\xi(T)\Big\rangle
= -\me ~\Big\langle p_{1}(T),\xi(T)\Big\rangle\nonumber\\[+0.3em]
&=&-\me\int_{0}^{T}\Big[\frac{1}{2}\inner{p_{1}(t)}{b_{xx}(t)\big(\eta(t),\eta(t)\big)}
+\frac{1}{2}\inner{q_{1}(t)}{\sigma_{xx}(t)\big(\eta(t),\eta(t)\big)}
\nonumber\\
& &+\frac{1}{6}\inner{p_{1}(t)}{b_{xxx}(t)\big(\gamma(t),\gamma(t),\gamma(t)\big)}
+\frac{1}{6}\inner{q_{1}(t)}{\sigma_{xxx}(t)\big(\gamma(t),\gamma(t),\gamma(t)\big)}
\nonumber\\
& &+\frac{1}{24}\inner{p_{1}(t)}{b_{xxxx}(t)\big(y_{1}^{\varepsilon}(t)
,y_{1}^{\varepsilon}(t),y_{1}^{\varepsilon}(t),y_{1}^{\varepsilon}(t)\big)}
\nonumber\\
& &+\frac{1}{24}\inner{q_{1}(t)}{\sigma_{xxxx}(t)\big(y_{1}^{\varepsilon}(t)
,y_{1}^{\varepsilon}(t),y_{1}^{\varepsilon}(t),y_{1}^{\varepsilon}(t)\big)}
\nonumber\\
& &+\inner{f_{x}(t)}{\xi(t)}+\inner{p_{1}(t)}{\delta b(t)}\chi_{E_{\varepsilon}}(t)
+\inner{q_{1}(t)}{\delta \sigma(t)}\chi_{E_{\varepsilon}}(t)
\nonumber\\[+0.3em]
& &+\inner{p_{1}(t)}{\delta b_{x}(t)\gamma(t)}\chi_{E_{\varepsilon}}(t)
+\inner{q_{1}(t)}{\delta \sigma_{x}(t)\eta(t)}\chi_{E_{\varepsilon}}(t)
\nonumber\\[+0.4em]
& &+\frac{1}{2}\inner{p_{1}(t)}{\delta b_{xx}(t)\big(y_{1}^{\varepsilon}(t),y_{1}^{\varepsilon}(t)\big)}
\chi_{E_{\varepsilon}}(t)\nonumber\\
& &
+\frac{1}{2}\inner{q_{1}(t)}{\delta \sigma_{xx}(t)\big(y_{1}^{\varepsilon}(t),y_{1}^{\varepsilon}(t)\big)}
\chi_{E_{\varepsilon}}(t)\Big]dt
+o(\varepsilon^{2}),\quad (\ephs\to 0^+),
\end{eqnarray}
\begin{eqnarray}\label{hxx(t)eta(t)2}
& &\me ~\big\langle h_{xx}(\bar{x}(T))\eta(T),\eta(T)\big\rangle
= -\me ~\big\langle p_{2}(T)\eta(T),\eta(T)\big\rangle\nonumber\\
&=&-\me\int_{0}^{T}\Big[
\frac{1}{2}\inner{p_{2}(t)b_{xx}(t)\big(\gamma(t),\gamma(t)\big)}{\eta(t)}
+\frac{1}{2}\inner{p_{2}(t)\eta(t)}{b_{xx}(t)\big(\gamma(t),\gamma(t)\big)}
\nonumber\\
& &
+\frac{1}{2}\inner{q_{2}(t)\sigma_{xx}(t)\big(\gamma(t),\gamma(t)\big)}{\eta(t)}
+\frac{1}{2}\inner{q_{2}(t)\eta(t)}{\sigma_{xx}(t)\big(\gamma(t),\gamma(t)\big)}
\nonumber\\
& &
+\frac{1}{6}\inner{p_{2}(t)b_{xxx}(t)\big(y_{1}^{\varepsilon}(t),
y_{1}^{\varepsilon}(t),y_{1}^{\varepsilon}(t)\big)}{\eta(t)}\nonumber\\
& &
+\frac{1}{6}\inner{p_{2}(t)\eta(t)}{b_{xxx}(t)\big(y_{1}^{\varepsilon}(t),
y_{1}^{\varepsilon}(t),y_{1}^{\varepsilon}(t)\big)}
\nonumber\\
& &
+\frac{1}{6}\inner{q_{2}(t)\sigma_{xxx}(t)\big(y_{1}^{\varepsilon}(t),
y_{1}^{\varepsilon}(t),y_{1}^{\varepsilon}(t)\big)}{\eta(t)}\nonumber\\
& &
+\frac{1}{6}\inner{q_{2}(t)\eta(t)}{\sigma_{xxx}(t)\big(y_{1}^{\varepsilon}(t),
y_{1}^{\varepsilon}(t),y_{1}^{\varepsilon}(t)\big)}
\nonumber\\
& &+\inner{p_{2}(t)\delta b(t)}{\eta(t)}\chi_{E_{\varepsilon}}(t)
+\inner{p_{2}(t)\eta(t) }{\delta b(t)}\chi_{E_{\varepsilon}}(t)
\nonumber\\[+0.3em]
& &+\inner{q_{2}(t)\delta \sigma(t)}{\eta(t) }\chi_{E_{\varepsilon}}(t)
+\inner{q_{2}(t)\eta(t)}{\delta \sigma(t)}\chi_{E_{\varepsilon}}(t)
\nonumber\\[+0.3em]
& &
+\inner{p_{2}(t)\delta b_{x}(t)y_{1}^{\varepsilon}(t)}
{\eta(t)}\chi_{E_{\varepsilon}}(t)
+\inner{p_{2}(t)\eta(t) }{\delta b_{x}(t)y_{1}^{\varepsilon}(t)}\chi_{E_{\varepsilon}}(t)
\nonumber\\[+0.3em]
& &
+\inner{q_{2}(t)\delta \sigma_{x}(t)\gamma(t)}
{\eta(t)}\chi_{E_{\varepsilon}}(t)
+\inner{q_{2}(t)\eta(t)}{\delta \sigma_{x}(t)\gamma(t)}\chi_{E_{\varepsilon}}(t)
\nonumber\\[+0.3em]
& &
+\big\langle p_{2}(t) \sigma_{x}(t)\eta(t), \frac{1}{2}\sigma_{xx}(t)\big(\gamma(t),\gamma(t)\big)
+\frac{1}{6}\sigma_{xxx}(t)\big(y_{1}^{\varepsilon}(t),
y_{1}^{\varepsilon}(t),y_{1}^{\varepsilon}(t)\big)
\big\rangle
\nonumber\\[+0.3em]
& &
+\big\langle p_{2}(t)\big[\frac{1}{2}\sigma_{xx}(t)\big(\gamma(t),\gamma(t)\big)
+\frac{1}{6}\sigma_{xxx}(t)\big(y_{1}^{\varepsilon}(t),
y_{1}^{\varepsilon}(t),y_{1}^{\varepsilon}(t)\big)\big],
 \sigma_{x}(t)\eta(t)\big\rangle
\nonumber\\[+0.3em]
& &
+\big\langle p_{2}(t) \sigma_{x}(t)\eta(t), \delta \sigma(t)+ \delta \sigma_{x}(t)\gamma(t) \big\rangle\chi_{E_{\varepsilon}}(t)
\nonumber\\[+0.3em]
& &
+\big\langle p_{2}(t) \big( \delta \sigma(t)+ \delta \sigma_{x}(t)\gamma(t)\big) ,\sigma_{x}(t)\eta(t)\big\rangle\chi_{E_{\varepsilon}}(t)
\nonumber\\[+0.3em]
& &
+\frac{1}{4}\big\langle p_{2}(t)\sigma_{xx}(t)\big(\gamma(t),\gamma(t)\big)
, \sigma_{xx}(t)\big(\gamma(t),\gamma(t)\big)
\big\rangle
\nonumber\\
& &
+\frac{1}{2}\big\langle p_{2}(t)\sigma_{xx}(t)\big(\gamma(t),\gamma(t)\big),
\delta \sigma(t)\big\rangle\chi_{E_{\varepsilon}}(t)
\nonumber\\
& &
+\frac{1}{2}\big\langle p_{2}(t) \delta \sigma(t),
\sigma_{xx}(t)\big(\gamma(t),\gamma(t)\big)
\big\rangle\chi_{E_{\varepsilon}}(t)
\nonumber\\
& &
+\big\langle p_{2}(t) \big(\delta \sigma(t)+ \delta \sigma_{x}(t)\gamma(t)\big),
\delta \sigma(t)+ \delta \sigma_{x}(t)\gamma(t)
\big\rangle\chi_{E_{\varepsilon}}(t)
\nonumber\\
& &
+\frac{1}{2}\big\langle p_{2}(t)\delta \sigma_{xx}(t)\big(y_{1}^{\varepsilon}(t),
y_{1}^{\varepsilon}(t)\big),
\delta \sigma(t)\big\rangle\chi_{E_{\varepsilon}}(t)
\nonumber\\
& &
+\frac{1}{2}\big\langle p_{2}(t)\delta \sigma(t),
\delta \sigma_{xx}(t)\big(y_{1}^{\varepsilon}(t),
y_{1}^{\varepsilon}(t)\big)\big\rangle\chi_{E_{\varepsilon}}(t)
\nonumber\\
& &-\inner{\hh_{xx}(t)\eta(t)}{\eta(t)}\Big]dt
+o(\varepsilon^{2})\quad (\ephs\to 0^+),
\end{eqnarray}
\begin{eqnarray}\label{hxxx(t)gamma(t)3}
& &\me ~\Big[h_{xxx}(\bar{x}(T))\big(\gamma(T),\gamma(T),\gamma(T)\big)\Big]
= -\me ~\Big[p_{3}(T)\big(\gamma(T),\gamma(T),\gamma(T)\big)\Big]\nonumber\\[+0.3em]
&=&-\me\int_{0}^{T}\Big[
p_{3}(t)\Big(\frac{1}{2}b_{xx}(t)\big(y_{1}^{\varepsilon}(t),y_{1}^{\varepsilon}(t) \big) +\delta b(t)\chi_{E_{\varepsilon}}(t),\gamma(t),\gamma(t)\Big)\nonumber\\[+0.3em]
& &
+p_{3}(t)\Big(\gamma(t),\frac{1}{2}b_{xx}(t)
\big(y_{1}^{\varepsilon}(t),y_{1}^{\varepsilon}(t) \big) +\delta b(t)\chi_{E_{\varepsilon}}(t),\gamma(t)\Big)
\nonumber\\[+0.3em]
& &
+p_{3}(t)\Big(\gamma(t),\gamma(t),\frac{1}{2}b_{xx}(t)
\big(y_{1}^{\varepsilon}(t),y_{1}^{\varepsilon}(t) \big) +\delta b(t)\chi_{E_{\varepsilon}}(t)\Big)
\nonumber\\[+0.3em]
& &
-\frac{3}{2}\inner{p_{2}(t)b_{xx}(t)\big(\gamma(t),\gamma(t)\big)}{\gamma(t)}
-\frac{3}{2}\inner{p_{2}(t)\gamma(t)}{b_{xx}(t)\big(\gamma(t),\gamma(t)\big)}
\nonumber\\
& &
-\frac{3}{2}\inner{q_{2}(t)\sigma_{xx}(t)\big(\gamma(t),\gamma(t)\big)}{\gamma(t)}
-\frac{3}{2}\inner{q_{2}(t)\gamma(t)}{\sigma_{xx}(t)\big(\gamma(t),\gamma(t)\big)}
\nonumber\\
& &
-\frac{3}{2}\inner{p_{2}(t)\sigma_{xx}(t)
\big(\gamma(t),\gamma(t)\big)}{\sigma_{x}(t)\gamma(t)}
-\frac{3}{2}\inner{p_{2}(t)\sigma_{x}(t)\gamma(t)}{\sigma_{xx}(t)\big(\gamma(t),\gamma(t)\big)}
\nonumber\\
& &
+q_{3}(t)\Big(\frac{1}{2}\sigma_{xx}(t)
\big(y_{1}^{\varepsilon}(t),y_{1}^{\varepsilon}(t) \big) +\delta \sigma(t)\chi_{E_{\varepsilon}}(t),\gamma(t),\gamma(t)\Big)
\nonumber\\[+0.3em]
& &
+q_{3}(t)\Big(\gamma(t),\frac{1}{2}\sigma_{xx}(t)
\big(y_{1}^{\varepsilon}(t),y_{1}^{\varepsilon}(t) \big) +\delta \sigma(t)\chi_{E_{\varepsilon}}(t),\gamma(t)\Big)
\nonumber\\[+0.3em]
& &
+q_{3}(t)\Big(\gamma(t),\gamma(t),\frac{1}{2}\sigma_{xx}(t)
\big(y_{1}^{\varepsilon}(t),y_{1}^{\varepsilon}(t) \big) +\delta \sigma(t)\chi_{E_{\varepsilon}}(t)\Big)
\nonumber\\[+0.3em]
& &
+\frac{1}{2}p_{3}(t)\Big(\sigma_{x}(t)\gamma(t),\sigma_{xx}(t)
\big(y_{1}^{\varepsilon}(t),y_{1}^{\varepsilon}(t) \big),\gamma(t)\Big)
\nonumber\\
& &
+\frac{1}{2}p_{3}(t)\Big(\sigma_{x}(t)\gamma(t),\gamma(t),\sigma_{xx}(t)
\big(y_{1}^{\varepsilon}(t),y_{1}^{\varepsilon}(t) \big)\Big)
\nonumber\\
& &
+\frac{1}{2}p_{3}(t)\Big(\gamma(t),\sigma_{x}(t)\gamma(t),\sigma_{xx}(t)
\big(y_{1}^{\varepsilon}(t),y_{1}^{\varepsilon}(t) \big)\Big)
\nonumber\\
& &
+\frac{1}{2}p_{3}(t)\Big(\sigma_{xx}(t)
\big(y_{1}^{\varepsilon}(t),y_{1}^{\varepsilon}(t) \big),\sigma_{x}(t)\gamma(t),\gamma(t)\Big)
\nonumber\\
& &
+\frac{1}{2}p_{3}(t)\Big(\sigma_{xx}(t)
\big(y_{1}^{\varepsilon}(t),y_{1}^{\varepsilon}(t) \big),\gamma(t),\sigma_{x}(t)\gamma(t)\Big)
\nonumber\\
& &
+\frac{1}{2}p_{3}(t)\Big(\gamma(t),\sigma_{xx}(t)
\big(y_{1}^{\varepsilon}(t),y_{1}^{\varepsilon}(t) \big),\sigma_{x}(t)\gamma(t)\Big)
\nonumber\\
& &
+p_{3}(t)\Big(\sigma_{x}(t)\gamma(t),\delta \sigma(t),\gamma(t)\Big)\chi_{E_{\varepsilon}}(t)
+p_{3}(t)\Big(\sigma_{x}(t)\gamma(t),\gamma(t),\delta \sigma(t)\Big)\chi_{E_{\varepsilon}}(t)
\nonumber\\[+0.3em]
& &
+p_{3}(t)\Big(\gamma(t),\sigma_{x}(t)\gamma(t),\delta \sigma(t)\Big)\chi_{E_{\varepsilon}}(t)
+p_{3}(t)\Big(\delta\sigma(t),\sigma_{x}(t)\gamma(t),
\gamma(t)\Big)\chi_{E_{\varepsilon}}(t)
\nonumber\\[+0.3em]
& &
+p_{3}(t)\Big(\delta\sigma(t),\gamma(t),
\sigma_{x}(t)\gamma(t)\Big)\chi_{E_{\varepsilon}}(t)
+p_{3}(t)\Big(\gamma(t),\delta \sigma(t),
\sigma_{x}(t)\gamma(t)\Big)\chi_{E_{\varepsilon}}(t)
\nonumber\\[+0.3em]
& &
+p_{3}(t)\Big(\delta \sigma(t),
\delta \sigma(t),\gamma(t)\Big)\chi_{E_{\varepsilon}}(t)
+p_{3}(t)\Big(\delta \sigma(t),\gamma(t),\delta \sigma(t)\Big)\chi_{E_{\varepsilon}}(t)
\nonumber\\[+0.3em]
& &
+p_{3}(t)\Big(\gamma(t),\delta \sigma(t),
\delta \sigma(t)\Big)\chi_{E_{\varepsilon}}(t)
\nonumber\\[+0.3em]
& &
+p_{3}(t)\Big(\delta \sigma(t),
\delta \sigma_{x}(t)y_{1}^{\varepsilon}(t),\gamma(t)\Big)\chi_{E_{\varepsilon}}(t)
+p_{3}(t)\Big(\delta \sigma(t),
\gamma(t),\delta \sigma_{x}(t)y_{1}^{\varepsilon}(t)\Big)\chi_{E_{\varepsilon}}(t)
\nonumber\\[+0.3em]
& &
+p_{3}(t)\Big(\gamma(t),\delta \sigma(t),
\delta \sigma_{x}(t)y_{1}^{\varepsilon}(t)\Big)\chi_{E_{\varepsilon}}(t)
+p_{3}(t)\Big(\delta \sigma_{x}(t)y_{1}^{\varepsilon}(t),\delta \sigma(t),
\gamma(t)\Big)\chi_{E_{\varepsilon}}(t)
\nonumber\\[+0.3em]
& &
+p_{3}(t)\Big(\delta \sigma_{x}(t)y_{1}^{\varepsilon}(t),
\gamma(t),\delta \sigma(t)\Big)\chi_{E_{\varepsilon}}(t)
+p_{3}(t)\Big(\gamma(t),\delta \sigma_{x}(t)y_{1}^{\varepsilon}(t),\delta \sigma(t)\Big)\chi_{E_{\varepsilon}}(t)
\nonumber\\[+0.3em]
%
& &-\hh_{xxx}(t)\big(\gamma(t),\gamma(t),\gamma(t)\big)
\Big]dt+o(\varepsilon^{2}),\quad (\ephs\to 0^+),
\end{eqnarray}
and
\begin{eqnarray}\label{hxxxx(t)y1(t)4}
& &\me ~\Big[h_{xxxx}(\bar{x}(T))\big(y_{1}^{\varepsilon}(T),
y_{1}^{\varepsilon}(T),y_{1}^{\varepsilon}(T),y_{1}^{\varepsilon}(T)\big)\Big]
\nonumber\\[+0.3em]
&=& -\me ~\Big[p_{4}(T)\big(y_{1}^{\varepsilon}(T),
y_{1}^{\varepsilon}(T),y_{1}^{\varepsilon}(T),y_{1}^{\varepsilon}(T)\big)\Big]
\nonumber\\[+0.3em]
&=&-\me\int_{0}^{T}\Big[
p_{4}(t)\big(\delta\sigma(t),\delta\sigma(t),
y_{1}^{\varepsilon}(t),y_{1}^{\varepsilon}(t)\big)\chi_{E_{\varepsilon}}(t)
\nonumber\\[+0.3em]
& &+p_{4}(t)\big(\delta\sigma(t),y_{1}^{\varepsilon}(t),\delta\sigma(t),
y_{1}^{\varepsilon}(t)\big)\chi_{E_{\varepsilon}}(t)
\nonumber\\[+0.3em]
& &+p_{4}(t)\big(\delta\sigma(t),y_{1}^{\varepsilon}(t),
y_{1}^{\varepsilon}(t),\delta\sigma(t)
\big)\chi_{E_{\varepsilon}}(t)
\nonumber\\[+0.3em]
& &+p_{4}(t)\big(y_{1}^{\varepsilon}(t),\delta\sigma(t),\delta\sigma(t),
y_{1}^{\varepsilon}(t)\big)\chi_{E_{\varepsilon}}(t)
\nonumber\\[+0.3em]
& &+p_{4}(t)\big(y_{1}^{\varepsilon}(t),\delta\sigma(t),
y_{1}^{\varepsilon}(t),\delta\sigma(t)
\big)\chi_{E_{\varepsilon}}(t)\nonumber\\[+0.3em]
& &+p_{4}(t)\big(y_{1}^{\varepsilon}(t),y_{1}^{\varepsilon}(t),\delta\sigma(t),
\delta\sigma(t)
\big)\chi_{E_{\varepsilon}}(t)\nonumber\\[+0.3em]
& &-2p_{3}(t)\Big(b_{xx}(t)\big(y_{1}^{\varepsilon}(t),
y_{1}^{\varepsilon}(t)\big),y_{1}^{\varepsilon}(t),y_{1}^{\varepsilon}(t)\Big)
\nonumber\\
& &-2p_{3}(t)\Big(y_{1}^{\varepsilon}(t),
b_{xx}(t)\big(y_{1}^{\varepsilon}(t),
y_{1}^{\varepsilon}(t)\big),y_{1}^{\varepsilon}(t)\Big)
\nonumber\\
& &-2p_{3}(t)\Big(y_{1}^{\varepsilon}(t),y_{1}^{\varepsilon}(t),
b_{xx}(t)\big(y_{1}^{\varepsilon}(t),
y_{1}^{\varepsilon}(t)\big)\Big)
\nonumber\\
& &-2q_{3}(t)\Big(\sigma_{xx}(t)\big(y_{1}^{\varepsilon}(t),
y_{1}^{\varepsilon}(t)\big),y_{1}^{\varepsilon}(t),y_{1}^{\varepsilon}(t)\Big)
\nonumber\\
& &-2q_{3}(t)\Big(y_{1}^{\varepsilon}(t),
\sigma_{xx}(t)\big(y_{1}^{\varepsilon}(t),
y_{1}^{\varepsilon}(t)\big),y_{1}^{\varepsilon}(t)\Big)
\nonumber\\
& &-2q_{3}(t)\Big(y_{1}^{\varepsilon}(t),y_{1}^{\varepsilon}(t),
\sigma_{xx}(t)\big(y_{1}^{\varepsilon}(t),
y_{1}^{\varepsilon}(t)\big)\Big)
\nonumber\\
& &-2p_{3}(t)\Big(\sigma_{x}(t)y_{1}^{\varepsilon}(t),
\sigma_{xx}(t)\big(y_{1}^{\varepsilon}(t),
y_{1}^{\varepsilon}(t)\big),y_{1}^{\varepsilon}(t)\Big)\nonumber\\[+0.3em]
& &-2p_{3}(t)\Big(\sigma_{x}(t)y_{1}^{\varepsilon}(t),y_{1}^{\varepsilon}(t),
\sigma_{xx}(t)\big(y_{1}^{\varepsilon}(t),
y_{1}^{\varepsilon}(t)\big)\Big)
\nonumber\\
& &-2p_{3}(t)\Big(y_{1}^{\varepsilon}(t),\sigma_{x}(t)y_{1}^{\varepsilon}(t),
\sigma_{xx}(t)\big(y_{1}^{\varepsilon}(t),
y_{1}^{\varepsilon}(t)\big)\Big)\nonumber\\
& &-2p_{3}(t)\Big(\sigma_{xx}(t)\big(y_{1}^{\varepsilon}(t),
y_{1}^{\varepsilon}(t)\big),\sigma_{x}(t)y_{1}^{\varepsilon}(t),
y_{1}^{\varepsilon}(t)\Big)
\nonumber\\
& &-2p_{3}(t)\Big(\sigma_{xx}(t)\big(y_{1}^{\varepsilon}(t),
y_{1}^{\varepsilon}(t)\big),y_{1}^{\varepsilon}(t),
\sigma_{x}(t)y_{1}^{\varepsilon}(t)\Big)\nonumber\\
& &-2p_{3}(t)\Big(y_{1}^{\varepsilon}(t),
\sigma_{xx}(t)\big(y_{1}^{\varepsilon}(t),y_{1}^{\varepsilon}(t)\big),
\sigma_{x}(t)y_{1}^{\varepsilon}(t)\Big)
\nonumber\\
& &-2\inner{p_{2}(t)b_{xxx}(t)\big(y_{1}^{\varepsilon}(t),
y_{1}^{\varepsilon}(t),y_{1}^{\varepsilon}(t)\big)}
{y_{1}^{\varepsilon}(t)}\nonumber\\[+0.3em]
& &-2\inner{p_{2}(t)y_{1}^{\varepsilon}(t)}
{b_{xxx}(t)\big(y_{1}^{\varepsilon}(t),
y_{1}^{\varepsilon}(t),y_{1}^{\varepsilon}(t)\big)}\nonumber\\[+0.3em]
& &-2\inner{p_{2}(t)\sigma_{x}(t)y_{1}^{\varepsilon}(t)}
{\sigma_{xxx}(t)\big(y_{1}^{\varepsilon}(t),
y_{1}^{\varepsilon}(t),y_{1}^{\varepsilon}(t)\big)}\nonumber\\[+0.3em]
& &-2\inner{p_{2}(t)\sigma_{xxx}(t)\big(y_{1}^{\varepsilon}(t),
y_{1}^{\varepsilon}(t),y_{1}^{\varepsilon}(t)\big)}
{\sigma_{x}(t)y_{1}^{\varepsilon}(t)}\nonumber\\[+0.3em]
& &-2\inner{q_{2}(t)y_{1}^{\varepsilon}(t)}
{\sigma_{xxx}(t)\big(y_{1}^{\varepsilon}(t),
y_{1}^{\varepsilon}(t),y_{1}^{\varepsilon}(t)\big)}\nonumber\\[+0.3em]
& &-2\inner{q_{2}(t)\sigma_{xxx}(t)\big(y_{1}^{\varepsilon}(t),
y_{1}^{\varepsilon}(t),y_{1}^{\varepsilon}(t)\big)}
{y_{1}^{\varepsilon}(t)}\nonumber\\[+0.3em]
& &-3\inner{p_{2}(t)\sigma_{xx}(t)\big(y_{1}^{\varepsilon}(t),
y_{1}^{\varepsilon}(t)\big)}{\sigma_{xx}(t)\big(y_{1}^{\varepsilon}(t),
y_{1}^{\varepsilon}(t)\big)}\nonumber\\[+0.3em]
& &-\hh_{xxxx}(t)\big(y_{1}^{\varepsilon}(t),
y_{1}^{\varepsilon}(t),y_{1}^{\varepsilon}(t),
y_{1}^{\varepsilon}(t)\big)
\Big]dt+o(\varepsilon^{2}),\qquad (\ephs\to 0^+).
\end{eqnarray}

Substituting (\ref{hx(t)xi(t)})--(\ref{hxxxx(t)y1(t)4}) into the Taylor expansion (\ref{taylorexp}), we obtain that
\begin{eqnarray*}
& & J(u^{\varepsilon})-J(\bar{u})\nonumber\\[+0.3em]
&=&-\me\int_{0}^{T}\Big[
-\delta f(t)\chi_{E_{\varepsilon}}(t)
-\delta f_x(t)\gamma(t)\chi_{E_{\varepsilon}}(t)
-\frac{1}{2}\delta f_{xx}(t)\big(y_{1}^{\varepsilon}(t),y_{1}^{\varepsilon}(t)\big)
\chi_{E_{\varepsilon}}(t)\nonumber\\
%
%
& &+\inner{p_{1}(t)}{\delta b(t)}\chi_{E_{\varepsilon}}(t)
+\inner{q_{1}(t)}{\delta \sigma(t)}\chi_{E_{\varepsilon}}(t)
\nonumber\\[+0.3em]
& &+\inner{p_{1}(t)}{\delta b_{x}(t)\gamma(t)}\chi_{E_{\varepsilon}}(t)
+\inner{q_{1}(t)}{\delta \sigma_{x}(t)\gamma(t)}\chi_{E_{\varepsilon}}(t)
\nonumber\\[+0.3em]
& &+\frac{1}{2}\inner{p_{1}(t)}{\delta b_{xx}(t)\big(y_{1}^{\varepsilon}(t),y_{1}^{\varepsilon}(t)\big)}
\chi_{E_{\varepsilon}}(t)\nonumber\\
& &
+\frac{1}{2}\inner{q_{1}(t)}{\delta \sigma_{xx}(t)\big(y_{1}^{\varepsilon}(t),y_{1}^{\varepsilon}(t)\big)}
\chi_{E_{\varepsilon}}(t)\nonumber\\
%
& &+\frac{1}{2}\inner{p_{2}(t)\delta b(t)}{\gamma(t)}\chi_{E_{\varepsilon}}(t)
+\frac{1}{2}\inner{p_{2}(t)\gamma(t) }{\delta b(t)}\chi_{E_{\varepsilon}}(t)
\nonumber\\
& &+\frac{1}{2}\inner{q_{2}(t)\delta \sigma(t)}{\gamma(t)}\chi_{E_{\varepsilon}}(t)
+\frac{1}{2}\inner{q_{2}(t)\gamma(t)}{\delta \sigma(t)}\chi_{E_{\varepsilon}}(t)
\nonumber\\
& &
+\frac{1}{2}\inner{p_{2}(t)\delta b_{x}(t)y_{1}^{\varepsilon}(t)}
{y_{1}^{\varepsilon}(t)}\chi_{E_{\varepsilon}}(t)
+\frac{1}{2}\inner{p_{2}(t)y_{1}^{\varepsilon}(t)}{\delta b_{x}(t)y_{1}^{\varepsilon}(t)}\chi_{E_{\varepsilon}}(t)
\nonumber\\
& &
+\frac{1}{2}\inner{q_{2}(t)\delta \sigma_{x}(t)y_{1}^{\varepsilon}(t)}
{y_{1}^{\varepsilon}(t)}\chi_{E_{\varepsilon}}(t)
+\frac{1}{2}\inner{q_{2}(t)y_{1}^{\varepsilon}(t)}{\delta \sigma_{x}(t)y_{1}^{\varepsilon}(t)}\chi_{E_{\varepsilon}}(t)
\nonumber\\
& &
+\frac{1}{2}\big\langle p_{2}(t) \sigma_{x}(t)\gamma(t), \delta \sigma(t) \big\rangle\chi_{E_{\varepsilon}}(t)
+\frac{1}{2}\big\langle p_{2}(t)\delta \sigma(t),\sigma_{x}(t)\gamma(t)\big\rangle\chi_{E_{\varepsilon}}(t)
\nonumber\\
& &
+\frac{1}{2}\big\langle p_{2}(t) \sigma_{x}(t)y_{1}^{\varepsilon}(t), \delta \sigma_{x}(t)y_{1}^{\varepsilon}(t)\big\rangle\chi_{E_{\varepsilon}}(t)
\nonumber\\
& &
+\frac{1}{2}\big\langle p_{2}(t)\delta \sigma_{x}(t)y_{1}^{\varepsilon}(t) ,\sigma_{x}(t)y_{1}^{\varepsilon}(t)\big\rangle\chi_{E_{\varepsilon}}(t)
\nonumber\\
& &
+\frac{1}{4}\big\langle p_{2}(t)\sigma_{xx}(t)\big(y_{1}^{\varepsilon}(t),y_{1}^{\varepsilon}(t)\big),
\delta \sigma(t)\big\rangle\chi_{E_{\varepsilon}}(t)
\nonumber\\
& &
+\frac{1}{4}\big\langle p_{2}(t) \delta \sigma(t),
\sigma_{xx}(t)\big(y_{1}^{\varepsilon}(t),y_{1}^{\varepsilon}(t)\big)
\big\rangle\chi_{E_{\varepsilon}}(t)
\nonumber\\
& &
+\frac{1}{2}\big\langle p_{2}(t)\delta \sigma(t),
\delta \sigma(t)\big\rangle\chi_{E_{\varepsilon}}(t)
+\frac{1}{2}\big\langle p_{2}(t) \delta \sigma(t),
\delta \sigma_{x}(t)\gamma(t)
\big\rangle\chi_{E_{\varepsilon}}(t)\nonumber\\
& &
+\frac{1}{2}\big\langle p_{2}(t) \delta \sigma_{x}(t)\gamma(t),
\delta \sigma(t)\big\rangle\chi_{E_{\varepsilon}}(t)
+\frac{1}{2}\big\langle p_{2}(t) \delta \sigma_{x}(t)y_{1}^{\varepsilon}(t),
\delta \sigma_{x}(t)y_{1}^{\varepsilon}(t)
\big\rangle\chi_{E_{\varepsilon}}(t)
\nonumber\\
& &
+\frac{1}{4}\big\langle p_{2}(t)\delta \sigma_{xx}(t)\big(y_{1}^{\varepsilon}(t),
y_{1}^{\varepsilon}(t)\big),
\delta \sigma(t)\big\rangle\chi_{E_{\varepsilon}}(t)
\nonumber\\
& &
+\frac{1}{4}\big\langle p_{2}(t)\delta \sigma(t),
\delta \sigma_{xx}(t)\big(y_{1}^{\varepsilon}(t),
y_{1}^{\varepsilon}(t)\big)\big\rangle\chi_{E_{\varepsilon}}(t)\nonumber\\
%
%
& &
+\frac{1}{6}p_{3}(t)\big(\delta b(t),y_{1}^{\varepsilon}(t),y_{1}^{\varepsilon}(t)\big)\chi_{E_{\varepsilon}}(t)
+\frac{1}{6}p_{3}(t)\big(y_{1}^{\varepsilon}(t),\delta b(t),y_{1}^{\varepsilon}(t)\big)\chi_{E_{\varepsilon}}(t)
\nonumber\\
& &
+\frac{1}{6}p_{3}(t)\big(y_{1}^{\varepsilon}(t),y_{1}^{\varepsilon}(t),\delta b(t)\big)\chi_{E_{\varepsilon}}(t)
+\frac{1}{6}q_{3}(t)\big(\delta \sigma(t),y_{1}^{\varepsilon}(t),y_{1}^{\varepsilon}(t)\big)\chi_{E_{\varepsilon}}(t)
\nonumber\\
& &
+\frac{1}{6}q_{3}(t)\big(y_{1}^{\varepsilon}(t), \delta \sigma(t),y_{1}^{\varepsilon}(t)\big)\chi_{E_{\varepsilon}}(t)
+\frac{1}{6}q_{3}(t)\Big(y_{1}^{\varepsilon}(t),y_{1}^{\varepsilon}(t),\delta \sigma(t)\Big)\chi_{E_{\varepsilon}}(t)
\nonumber\\
& &
+\frac{1}{6}p_{3}(t)\Big(\sigma_{x}(t)y_{1}^{\varepsilon}(t),\delta \sigma(t),y_{1}^{\varepsilon}(t)\Big)\chi_{E_{\varepsilon}}(t)
+\frac{1}{6}p_{3}(t)\Big(\sigma_{x}(t)y_{1}^{\varepsilon}(t),
y_{1}^{\varepsilon}(t),\delta \sigma(t)\Big)\chi_{E_{\varepsilon}}(t)
\nonumber\\
& &
+\frac{1}{6}p_{3}(t)\Big(y_{1}^{\varepsilon}(t),
\sigma_{x}(t)y_{1}^{\varepsilon}(t),\delta \sigma(t)\Big)
\chi_{E_{\varepsilon}}(t)
+\frac{1}{6}p_{3}(t)\Big(\delta \sigma(t),
\sigma_{x}(t)y_{1}^{\varepsilon}(t),
y_{1}^{\varepsilon}(t)\Big)\chi_{E_{\varepsilon}}(t)
\nonumber\\
& &
+\frac{1}{6}p_{3}(t)\Big(\delta\sigma(t),y_{1}^{\varepsilon}(t),
\sigma_{x}(t)y_{1}^{\varepsilon}(t)\Big)\chi_{E_{\varepsilon}}(t)
+\frac{1}{6}p_{3}(t)\Big(y_{1}^{\varepsilon}(t),\delta \sigma(t),
\sigma_{x}(t)y_{1}^{\varepsilon}(t)\Big)\chi_{E_{\varepsilon}}(t)
\nonumber\\
& &
+\frac{1}{6}p_{3}(t)\Big(\delta \sigma(t),
\delta \sigma(t),\gamma(t)\Big)\chi_{E_{\varepsilon}}(t)
+\frac{1}{6}p_{3}(t)\Big(\delta \sigma(t),\gamma(t),\delta \sigma(t)\Big)\chi_{E_{\varepsilon}}(t)
\nonumber\\
& &
+\frac{1}{6}p_{3}(t)\Big(\gamma(t),\delta \sigma(t),
\delta \sigma(t)\Big)\chi_{E_{\varepsilon}}(t)
\nonumber\\
& &
+\frac{1}{6}p_{3}(t)\Big(\delta \sigma(t),
\delta \sigma_{x}(t)y_{1}^{\varepsilon}(t),
y_{1}^{\varepsilon}(t)\Big)\chi_{E_{\varepsilon}}(t)
+\frac{1}{6}p_{3}(t)\Big(\delta \sigma(t),
y_{1}^{\varepsilon}(t),\delta \sigma_{x}(t)y_{1}^{\varepsilon}(t)\Big)\chi_{E_{\varepsilon}}(t)
\nonumber\\
& &
+\frac{1}{6}p_{3}(t)\Big(y_{1}^{\varepsilon}(t),\delta \sigma(t),
\delta \sigma_{x}(t)y_{1}^{\varepsilon}(t)\Big)\chi_{E_{\varepsilon}}(t)
+\frac{1}{6}p_{3}(t)\Big(\delta \sigma_{x}(t)y_{1}^{\varepsilon}(t),\delta \sigma(t),
y_{1}^{\varepsilon}(t)\Big)\chi_{E_{\varepsilon}}(t)
\nonumber\\
& &
+\frac{1}{6}p_{3}(t)\Big(\delta \sigma_{x}(t)y_{1}^{\varepsilon}(t),
y_{1}^{\varepsilon}(t),\delta \sigma(t)\Big)\chi_{E_{\varepsilon}}(t)
+\frac{1}{6}p_{3}(t)\Big(y_{1}^{\varepsilon}(t),\delta \sigma_{x}(t)y_{1}^{\varepsilon}(t),\delta \sigma(t)\Big)\chi_{E_{\varepsilon}}(t)
\nonumber\\
%
%
& &
+\frac{1}{24}p_{4}(t)\big(\delta\sigma(t),\delta\sigma(t),
y_{1}^{\varepsilon}(t),y_{1}^{\varepsilon}(t)\big)\chi_{E_{\varepsilon}}(t)
\nonumber\\
& &
+\frac{1}{24}p_{4}(t)\big(\delta\sigma(t),y_{1}^{\varepsilon}(t),\delta\sigma(t),
y_{1}^{\varepsilon}(t)\big)\chi_{E_{\varepsilon}}(t)
\nonumber\\
& &+\frac{1}{24}p_{4}(t)\big(\delta\sigma(t),y_{1}^{\varepsilon}(t),
y_{1}^{\varepsilon}(t),\delta\sigma(t)
\big)\chi_{E_{\varepsilon}}(t)\nonumber\\
& &
+\frac{1}{24}p_{4}(t)\big(y_{1}^{\varepsilon}(t),\delta\sigma(t),\delta\sigma(t),
y_{1}^{\varepsilon}(t)\big)\chi_{E_{\varepsilon}}(t)
\nonumber\\
& &+\frac{1}{24}p_{4}(t)\big(y_{1}^{\varepsilon}(t),\delta\sigma(t),
y_{1}^{\varepsilon}(t),\delta\sigma(t)
\big)\chi_{E_{\varepsilon}}(t)\nonumber\\
& &
+\frac{1}{24}p_{4}(t)\big(y_{1}^{\varepsilon}(t),y_{1}^{\varepsilon}(t),
\delta\sigma(t),\delta\sigma(t)\big)\chi_{E_{\varepsilon}}(t)
\Big]dt+o(\varepsilon^{2}),\qquad (\ephs\to 0^+)\\
&=&-\me\int_{0}^{T}\Big[
\mh(t,\bar{x}(t),u(t))+\inner{\mss(t,\bar{x}(t),u(t))}{\gamma(t)}\\
& &
+\frac{1}{2}\inner{\mt(t,\bar{x}(t),u(t))y_{1}^{\varepsilon}(t)}{y_{1}^{\varepsilon}(t)}
\Big]\chi_{E_{\varepsilon}}(t)dt+o(\varepsilon^{2}),\qquad (\ephs\to 0^+).
\end{eqnarray*}
This proves (\ref{shorttaylor}), and completes the proof of Proposition \ref{variational formulation for noncov}.


\end{document}